\documentclass[10pt,hyp,english]{amsart}

\usepackage{amsmath, amscd, amsthm} 
\usepackage{amssymb, amsfonts, latexsym} 

\usepackage{booktabs}
\usepackage{mathtools}
\usepackage{enumerate}

\usepackage{color}

\usepackage[margin=1.1in]{geometry}
\usepackage{palatino}
\usepackage{mathrsfs}
\usepackage{booktabs}
\usepackage[svgnames]{xcolor} 
\usepackage{url}
\usepackage{breakurl}
\usepackage[breaklinks]{hyperref}

\usepackage{caption}
\usepackage{array,multirow}

\usepackage[T1]{fontenc}

\usepackage{tikz}
\usepackage{tikz-cd}
\usepackage{array}
\usepackage[backend=biber,giveninits=true,hyperref,style=alphabetic,maxcitenames=8,maxbibnames=8]{biblatex}
\usepackage{bbm}
\DeclareFieldFormat{postnote}{#1}
\DeclareFieldFormat{multipostnote}{#1}
\bibliography{main.bib}

\usepackage{hyperref}

\newcommand{\A}{\mathbb{A}}
\newcommand{\E}{\mathbf{E}}
\newcommand{\Z}{\mathbb{Z}}

\newcommand{\Q}{\mathbb{Q}}

\renewcommand{\P}{\mathbb{P}}
\renewcommand{\S}{\mathbf{S}}

\newcommand{\OFS}{\mathcal{O}_{F,\mathcal{S}}}
\newcommand{\OO}{\mathcal{O}}
\newcommand{\Gm}{\mathbb{G}_m}
\newcommand{\AAA}{\mathcal{A}}
\newcommand{\MGL}{\mathbf{MGL}}
\newcommand{\KGL}{\mathbf{KGL}}
\newcommand{\kgl}{\mathbf{kgl}}

\newcommand{\BPR}{BP\mathbb{R}}
\newcommand{\MU}{\mathbf{MU}}
\newcommand{\MO}{\mathbf{MO}}
\newcommand{\MZ}{\mathbf{M}\Z}
\newcommand{\MA}{\mathbf{M}A}
\newcommand{\C}{\mathbb{C}}
\newcommand{\RC}{R_\C^{C_2}}
\newcommand{\HZ}{\mathbf{H}\Z}
\newcommand{\R}{\mathbb{R}}
\newcommand{\SH}{\mathcal{SH}}
\newcommand{\Top}{\mathcal{T}op}
\newcommand{\Sm}{\operatorname{Sm}}

\newcommand{\Ext}{\operatorname{Ext}}
\newcommand{\Tor}{\operatorname{Tor}}
\newcommand{\colim}{\operatorname{colim}}
\newcommand{\cofib}{\operatorname{cofib}}
\newcommand{\holim}{\operatorname{holim}}
\newcommand{\tensor}{\otimes}
\newcommand{\Spec}{\operatorname{Spec}}
\newcommand{\Pic}{\operatorname{Pic}}
\newcommand{\Th}{\operatorname{Th}}

\newcommand{\Sq}{\operatorname{Sq}}
\newcommand{\wt}[1]{\widetilde{#1}}
\newcommand{\ol}[1]{\bar{#1}}
\newcommand{\et}{\acute{e}t}

\newcommand{\coker}{\operatorname{coker}}
\newcommand{\pr}{\operatorname{pr}}
\newcommand{\Gr}{\operatorname{Gr}}
\newcommand{\rk}{\operatorname{rk}}

\newcommand{\s}{\mathsf{s}}
\newcommand{\f}{\mathsf{f}}

\newcommand{\eff}{\text{eff}}
\newcommand{\veff}{\text{veff}}
\newcommand{\vf}{\widetilde{\mathsf{f}}}
\newcommand{\vcd}{\text{vcd}}
\newcommand{\cd}{\text{cd}}
\renewcommand{\star}{{\ast,\ast}}

\newcommand{\BPGL}{\mathbf{BPGL}}
\newcommand{\hocolim}{\operatorname{hocolim}}

\makeatletter
\def\iddots{\mathinner{\mkern1mu\raise\p@
    \vbox{\kern7\p@\hbox{.}}\mkern2mu
    \raise4\p@\hbox{.}\mkern2mu\raise7\p@\hbox{.}\mkern1mu}}
\makeatother

\title{Algebraic cobordism of number fields}

\usepackage[poorman]{cleveref}

\theoremstyle{theoremstyle}
\newtheorem{theorem}{Theorem}[section]
\newtheorem*{theorem*}{Theorem}
\newtheorem{lemma}[theorem]{Lemma}
\newtheorem{proposition}[theorem]{Proposition}
\newtheorem*{proposition*}{Proposition}
\newtheorem{corollary}[theorem]{Corollary}
\newtheorem*{corollary*}{Corollary}
\newtheorem{remark}[theorem]{Remark}
\newtheorem{remark*}{Remark}

\newtheorem{defn*}{Definition}
\theoremstyle{definition}

\theoremstyle{theoremstyle}

\thanks{The author was partially supported by the RCN Frontier Research Group Project no.~250399.}

\begin{document}

\title{Algebraic cobordism of number fields}
\subjclass[2010]{14F42 (primary),
  55N22, %
  55P91, %
  11R42, %
  55T99 %
  (secondary)}
\keywords{Motivic homotopy theory,
algebraic cobordism,
slice filtration,
Morava $K$-theory,
special values of Dedekind $\zeta$-functions of number fields
}

\author{Jonas Irgens Kylling}
\address{Department of Mathematics, University of Oslo, Norway}
\email{jonasik@math.uio.no}
\begin{abstract}
We compute the motivic homotopy groups of algebraic cobordism over number fields,
the motivic homotopy groups of 2-complete algebraic cobordism over the real numbers and rings of $2$-integers
and the motivic homotopy groups of mod 2 motivic Morava $K$-theory over fields with low virtual cohomological dimension.
As an application we relate the order of the algebraic cobordism groups of rings of 2-integers to special values of Dedekind $\zeta$-functions of totally real abelian number fields.
\end{abstract}
\maketitle

\section{Introduction}

Algebraic cobordism is a bigraded cohomology theory of smooth schemes represented by the Thom spectrum $\MGL$ in the stable motivic homotopy category.
Algebraic cobordism was introduced by Voevodsky as an analogue to complex cobordism to help solve the Milnor conjecture \cite{Voevodsky:ICM}. This paper investigates the bigraded motivic homotopy groups $\pi_\star(\MGL)$ of the algebraic cobordism spectrum over the real numbers, number fields and rings of $\mathcal S$-integers.

The algebraic cobordism spectrum $\MGL$ is a $\P^1$-spectrum constructed analogously to the complex cobordism spectrum $\MU$ in topology \cite[6.3]{Voevodsky:ICM}.
Consider the Grassmann scheme $\Gr(m,n)$ of dimension $m$-planes in $\A^n$, and its canonical $m$-vector bundle $\gamma_{m,n}$.
Taking the colimit over $n$ we get the infinite Grassmannian $\Gr(m,\infty)$
with canonical bundle $\gamma_{m,\infty}$.
The embeddings $\Gr(m,\infty) \to \Gr(m+1, \infty)$ induce maps $\epsilon^1\oplus\gamma_{m,\infty} \to \gamma_{m+1,\infty}$, where $\epsilon^1$ is the trivial rank one bundle. In the unstable motivic homotopy category the infinite Grassmannians are classifying spaces for the isomorphism classes of $m$-vector bundles over smooth schemes \cite{Morel:A1}.
Taking the Thom spaces of the bundles $\gamma_{m,\infty}$ we get the constituent spaces of the $\P^1$-spectrum $\MGL = (\Th(\gamma_{0,\infty}), \Th(\gamma_{1,\infty}), \dots)$ with structure maps
\[
\P^1 \wedge \Th(\gamma_{m,\infty})
\cong \Th(\epsilon^1\oplus\gamma_{m,\infty})
\to \Th(\gamma_{m+1,\infty}).
\]

In motivic homotopy theory the spheres are bigraded
$S^{p,q} = (S^1_s)^{p-q}\wedge \Gm^{\wedge q}$, where $S^1_s$ is the simplicial circle and $\Gm$ is the punctured affine line pointed at 1.
Hence motivic homotopy groups are bigraded as well, similar to how $C_2$-equivariant homotopy groups are indexed by the trivial representation and the sign representation.
The bigraded homotopy groups of the algebraic cobordism spectrum are known along some special lines.
The Hopkins-Morel isomorphism implies that for fields in characteristic 0 there is an isomorphism $\pi_{2n,n}(\MGL) \cong L_n$,
where $L_*$ is the Lazard ring \cite[Proposition 8.2]{Hoyois:fromto}.
More generally Levine and Morel \cite{Levine-Morel} defined a cohomology theory $\Omega^*(-)$ on smooth schemes in geometric terms such that for a smooth scheme $X$ there is an isomorphism $\Omega^n(X)\cong \MGL^{2n,n}(X)$ \cite[Corollary 8.15]{Hoyois:fromto}.
It is an open problem to give a geometric description of all of $\MGL^\star(X)$.
For a field $F$ Spitzweck showed that $\MGL_{2n+1,n}(F;\Z) \cong F^\times \tensor L_{n+1}$ \cite[Corollary 7.5]{Spitzweck:mixed}, because the slice spectral sequence collapses in this range.
For the same reason $\MGL_{2n+2,n}(F;\Z) \cong K_2(F) \tensor L_{n+2}$,
and along the diagonal there is an isomorphism with Milnor $K$-theory $ K^M_{-n}(F) \cong \pi_{n,n}(\MGL)$.
Furthermore, $\pi_{p,q}(\MGL) = 0$ for $p < q$ or $2p < q$.
We compute all of the bigraded motivic homotopy groups of the algebraic cobordism spectrum over number fields in terms of the integral motivic cohomology of the number fields, up to extension, in \Cref{thm:MGLF}.
Over rings of $\mathcal{S}$-integers and the real numbers we compute the homotopy groups of the 2-completed algebraic cobordism spectrum in \Cref{thm:OFS} and \Cref{thm:MGLR}.
As an application we relate the order of
the algebraic cobordism groups of the ring of $2$-integers in a totally real abelian number field $F$
to special values of the Dedekind $\zeta$-function of $F$ in \Cref{thm:zeta}.

Associated to the algebraic cobordism spectrum there is the motivic Brown-Peterson spectrum $\BPGL$, its truncations $\BPGL\langle n \rangle$, and motivic Morava $K$-theory $K(n)$.
The topological realizations of these motivic spectra play an important role in chromatic homotopy theory.
The homotopy groups of these motivic spectra can be computed by the same techniques used for $\MGL$.
We compute $\pi_\star(\BPGL)$, $\pi_\star(\BPGL\langle n\rangle)$ over the real numbers in \Cref{thm:BPGL}, and $\pi_\star(K(n))$ over fields with virtual cohomological dimension less than $2^{n+1}-2$ in \Cref{thm:Kn}.

Our main tool is the slice spectral sequence and the slice filtration introduced by Voevodsky in \cite{Voevodsky:open}.
We also make use of the interplay between motivic and $C_2$-equivariant stable homotopy theory. This interplay is facilitated by the $C_2$-equivariant complex realization functor induced by sending a smooth scheme over $\R$ to its complex realization equipped with the $C_2$-action given by complex conjugation.
This allows us to use results of Hill, Hopkins and Ravenel \cite{HHR} in the motivic setting. Some of our proofs are both inspired by and heavily dependent on those of \cite{HHR}.

The results over rings of $\mathcal S$-integers relies on the work of Levine \cite{Levine99} and Spitzweck \cite{Spitzweck:commutative} on motivic cohomology of Dedekind domains. Our results over rings of integers are somewhat similar to those of Rognes and Weibel \cite{Rognes-Weibel} on algebraic $K$-theory of rings of 2-integers. They use the Bloch-Lichtenbaum spectral sequence to compute 2-primary algebraic $K$-theory of number fields and rings of $2$-integers. 

\subsection*{Previous work}
Yagita \cite{Yagita:atiyah} uses the slice spectral sequence to compute the associated graded of $\pi_{*,*}(\BPGL/2)$ over the real numbers.
Even earlier Hu and Kriz \cite{Hu-Kriz:real} computed the coefficients $\pi_{*,*}^{C_2}(\BPR)$ of the Brown-Peterson spectrum associated to $C_2$-equivariant complex cobordism. A careful proof of this is also in the appendix of \cite{Greenlees-Meier}.
Their answer has essentially the same form as $\MGL_{*,*}(\R;\Z_2)$,
and they explore relations to $\MGL$ in \cite{Hu-Kriz:remarks},
where they also compute $K_\star(n)(\R)$.
In \cite{Hill} Hill proves that the motivic Adams spectral sequence of $\BPGL\langle n \rangle$ over $\R$ collapses, and notes that $\pi_\star(\BPGL)$ can be obtained from the computation of $\pi_\star^{C_2}(BP\R)$ in \cite{Hu-Kriz:real}.
Ormsby uses the motivic Adams spectral sequence to compute $\pi_\star(\BPGL\langle n \rangle)$ over $p$-adic fields with the motivic Adams spectral sequence in \cite{Ormsby}.
Similar techniques are used by Ormsby and \O{}stv\ae{}r \cite{OP:BP} to compute $\pi_{*,*}(\BPGL\langle n \rangle)$ and $\pi_{*,*}(\MGL_2^{\wedge})$ over the rational numbers.
Ellis considers a motivic $C_2$-equivariant version of $\MGL$ and computes its coefficient over the complex numbers in his thesis \cite{Ellis}. %
Recently Heard \cite{Heard} compared the motivic slice filtration of $\MGL$ to the equivariant slice filtration of $\MU$, and computed $\pi_\star(\BPGL)$ and $\pi_\star(\BPGL\langle n \rangle/2)$ over the real numbers.

\subsection*{Organization of this paper}
We begin with three short sections recalling theory and results used later in the paper.
In \Cref{sec:intro} we give a quick review of the relation between motivic and $C_2$-equivariant stable homotopy theory.
\Cref{sec:motcoh} follows with a description of (2-complete) motivic cohomology of the real numbers, number fields and rings of $\mathcal{S}$-integers in number fields.
In \Cref{sec:slice} we review the slice spectral sequence with particular attention to the case of $\MGL$.
Next are the computations, which are the most technical part of the paper.
We compute $\MGL_{*,*}(\R;\Z_2)$ in \Cref{sec:MGLR} by first running the slice spectral sequence for $\MGL/2^n$ and then passing to the limit over $n$.
We determine the multiplicative extensions in $\MGL_{*,*}(\R;\Z_2)$ by elementary means in contrast to \cite{Hu-Kriz:real}, cf.~\Cref{rmk:elementary}. This is similar to the proof in \cite{Greenlees-Meier}.
In \Cref{sec:MGLF} we use the computations over $\R$ to determine the differentials and the $E^\infty$-page of the slice spectral sequence over number fields.
In \Cref{sec:zeta} we relate the cardinality of $\MGL_{*,*}(\OO_F[\frac{1}{2}];\Z_2)$ to special values of the Dedekind $\zeta$-function of a totally real abelian number field $F$.
This is a corollary of a result of Manfred Kolster and the computation of $\MGL_{*,*}(\OO_F[\frac{1}{2}];\Z_2)$ in terms of motivic cohomology.
For completeness we summarize some easy results on the motivic homotopy groups of (2-completed) algebraic cobordism over fields of $2$-cohomological dimension less than or equal to $2$ in \Cref{sec:final}. In this case the slice spectral sequence collapses for degree reasons.
In \Cref{sec:morava} we compute the motivic homotopy groups of the (truncated) Brown-Peterson spectra $\BPGL$ and $\BPGL\langle n\rangle$ over the real numbers, and of Morava K-theory $K(n)$ over fields with virtual cohomological dimension $\vcd(F) < 2(2^n - 1)$.
The techniques are the same as for the algebraic cobordism spectrum.

\subsection*{Acknowledgments}
I would like to thank Lorenzo Mantovani for interesting discussions which inspired the questions considered in this paper.
I would also like to thank John Rognes, Oliver R\"{o}ndigs, Glen Wilson and Paul Arne \O{}stv\ae{}r for helpful discussions and comments.
The author is grateful to Lennart Meier for pointing out an error in the statement of \Cref{thm:MGLR}. The author thanks the Hausdorff research Institute for Mathematics in Bonn for the hospitality and the delicious cakes during the Hausdorff Trimester Program on ``K-Theory and Related Fields'' in the summer of 2017, where this work was initiated. The author is also grateful for the hospitality of the Faculty of Mathematics of the University of Duisburg-Essen for a week during the autumn of 2017.

\subsection*{Notation}
Throughout the paper we will work over a number field $F$, with the exception of the final section on motivic Morava $K$-theory.
We write $\OFS$ for the ring of $\mathcal S$-integers in $F$, for $\mathcal S \supset \{2, \infty \}$ a set of places.
We will work in the stable motivic homotopy category $\SH(F)$.
The following table summarizes the notation used in the paper:
\begin{center}
\footnotesize
\begin{tabular}{l|l}
$\SH(F)$, $\SH^{C_2},\SH$ & motivic, $C_2$-equivariant and ordinary stable homotopy category \\
$\E$, $\S$  & motivic spectrum, motivic sphere spectrum \\
$\MGL$, $\MZ$ & algebraic cobordism, motivic cohomology spectrum \\
$\BPGL$, $\BPGL\langle n \rangle$ & motivic (truncated) Brown-Peterson spectrum \\
$K(n)$ & $n$th motivic Morava $K$-theory \\
$\E_2^{\wedge}$, $\MZ_2$ & $\holim_n \E/2^n$, $\holim_n \MZ/2^n$ \\
$\pi_{p,q}(\E)$ & $[(S^1)^{\wedge p-q}\wedge \Gm^{\wedge q}, \E]_{\SH(F)}$ \\
$\E_{*,*}(F;\Z)$, $\E_{*,*}(F;\Z/2^n)$ & $\pi_{*,*}(\E)$, $\pi_{*,*}(\E/2^n)$ \\
$\E_{*,*}(F;\Z_2)$ &
$\pi_{*,*}(\E_2^{\wedge})$ \\
$H^{p,q}(F;A)$ & $\pi_{-p,-q}(\MA)$, $A \neq \Z_2$ an abelian group \\ %
$\wt{H}^{p,q}(F;\Z)$ & $\ker(H^{p,q}(F;\Z) \to \oplus^{r_1} H^{p,q}(\R;\Z))$, $F$ a field with $r_1$ real embeddings \\
$\f_q$, $\s_q$ & $q$th effective cover, $q$th slice \\
$E^r(F;\Z)$, $E^r(F;\Z/2^n)$ & $E^r$-page of the slice spectral sequence of $\MGL$ and $\MGL/2^n$ over $F$ \\
$E^r(F;\Z_2)$ & $\lim_n E^r(F;\Z/2^n)$ \\
$E^r$, $E^r(\Z)$, $E^r(\Z/2^n)$ & $E^r(F;\Z)$, and so on \\
$r_1$, $r_2$ & real and complex conjugate pairs of embeddings of a number field $F$ \\
$\OFS$ & ring of $\mathcal{S}$-integers in a number field $F$, for a set of places $\mathcal{S} \supset \{2,\infty\}$ \\
$\nu_2(n)$ & the $2$-adic valuation of $n$ \\
$L_*$, $K^M_*(F), k_*(F)$ & the Lazard ring, integral and mod 2 Milnor $K$-theory of $F$ \\
$R_\C$, $R_\C^{C_2}$, $R_\R$,  & complex realization, $C_2$-equivariant complex realization, real realization \\
$\Phi^{C_2}$, $\rho$, $\sigma$ & geometric fixed points, $\{\pm 1 \} = S^{0} \hookrightarrow \Gm \in \pi_{-1,-1}(\S)$, sign representation of $C_2$ \\
$\AAA^\star, \AAA_\star$, $\Delta$ & motivic Steenrod algebra, dual, coproduct of dual \\
$\tau$, $\rho$, $\xi_i$, $\tau_i$ & algebra generators of $\AAA_\star(\R)$ \\
$\Sq^i$, $Q_i$ & motivic Steenrod squares, Milnor primitive, the dual of $\tau_i$ \\
$\vcd(F)$ & $\cd_2(F(\sqrt{-1}))$, virtual cohomological dimension of a field $F$
\end{tabular}
\end{center}

\section{Motivic and $C_2$-equivariant stable homotopy theory}
\label{sec:intro}
In this section we recall the construction of some functors between the motivic, $C_2$-equivariant and topological stable homotopy category, and the image of some of the spectra we are interested in by these functors.
The main observation is \Cref{lem:fixed-points-realization}, which tells us how to compute real realizations.

Recall that there are realization functors
\begin{align*}
R_\C : \SH(\C) \to \SH, \qquad
R_\C^{C_2} : \SH(\R) \to \SH^{C_2}, \qquad
R_\R : \SH(\R) \to \SH.
\end{align*}
The $C_2$-equivariant complex realization functor $R_\C^{C_2}$ is induced by
\[
\Sm/\Spec(\R) \ni X \mapsto X(\C) \in \Top^{C_2}
\]
with the analytic topology and $C_2$-action given by complex conjugation,
while $R_\R$ is induced by
\[
\Sm/\Spec(\R) \ni X \mapsto X(\R) \in \Top.
\]
This give rise to well defined functors of the stable homotopy categories since the motivic stable homotopy category is generated by suspension spectra of schemes, the functors map motivic spheres to ($C_2$-equivariant) spheres, and are compatible with $\A^1$-invariance and Nisnevich descent.
Both realization functors are left adjoints.
See \cite{HO:galois} for a more careful construction and thorough discussion of the realization functors.

Recall that the geometric fixed points functor $\Phi^{C_2} : \SH^{C_2} \to \SH$ is characterized by the following properties \cite[Remark 7.15]{Schwede}, \cite[Proposition 2.45]{HHR}:
\begin{enumerate}
\item $\Phi^{C_2}(\Sigma^\infty_+ A) = \Sigma^\infty_+ (A^{C_2})$,
\item $\Phi^{C_2}$ is monoidal,
\item $\Phi^{C_2}$ preserves filtered homotopy colimits.
\end{enumerate}
Hence
\[
R_\R\Sigma_+^\infty (X) = \Sigma_+^\infty (X(\R)) = \Sigma_+^\infty (X(\C)^{C_2}) = \Phi^{C_2}\Sigma_+^\infty(X(\C)) = \Phi^{C_2}R_\C^{C_2}\Sigma^\infty_+(X).
\]
Since $\Phi^{C_2}$ preserves filtered colimits and $\SH(\R)$ is generated by suspension spectra of schemes we get the following lemma, which seems to be folklore.
\begin{lemma}
\label{lem:fixed-points-realization}
Real realization is naturally isomorphic to the geometric fixed points of the $C_2$-equivariant complex realization. That is,
$
R_\R \cong \Phi^{C_2}R_\C^{C_2}.
$
\end{lemma}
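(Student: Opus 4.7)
The plan is to verify the natural isomorphism first on a set of compact generators of $\SH(\R)$ — namely the suspension spectra of smooth $\R$-schemes — and then bootstrap to all of $\SH(\R)$ via formal properties both functors share.

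First I would observe that the chain of equalities displayed in the excerpt,
\[
R_\R\Sigma_+^\infty(X) = \Sigma_+^\infty(X(\R)) = \Sigma_+^\infty(X(\C)^{C_2}) = \Phi^{C_2}\Sigma_+^\infty(X(\C)) = \Phi^{C_2}R_\C^{C_2}\Sigma_+^\infty(X),
\]
is natural in $X \in \Sm/\Spec(\R)$. The outer equalities are the definitions of $R_\R$ and $R_\C^{C_2}$; the identification $X(\R) = X(\C)^{C_2}$ is the tautological, functorial statement that the fixed points of complex conjugation on $X(\C)$ are precisely the real points; and the middle step is property (1) in the characterization of $\Phi^{C_2}$, which is manifestly natural in the input space. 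Assembling these yields a natural isomorphism $\eta_X : R_\R\Sigma_+^\infty(X) \xrightarrow{\sim} \Phi^{C_2}R_\C^{C_2}\Sigma_+^\infty(X)$ on suspension spectra of smooth $\R$-schemes.

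Second I would promote $\eta$ to a natural isomorphism of functors on all of $\SH(\R)$. Both $R_\R$ and $R_\C^{C_2}$ are left adjoints, as recorded in the excerpt, so they preserve all colimits and are exact; $\Phi^{C_2}$ is monoidal, exact, and preserves filtered colimits by its characterizing properties, and preserves coproducts by property (1) applied to wedges of pointed spaces. Hence both $R_\R$ and the composite $\Phi^{C_2}R_\C^{C_2}$ are exact, coproduct-preserving, bigraded-suspension-commuting endofunctors of triangulated categories. Since $\SH(\R)$ is compactly generated by the bigraded suspensions $\Sigma^{p,q}\Sigma_+^\infty X$ of suspension spectra of smooth $\R$-schemes, on which $\eta$ is already an isomorphism, the standard argument that such a natural transformation is an isomorphism on all of the generated triangulated category completes the extension.

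The point I expect to require the most care is the passage from the agreement on suspension spectra to agreement on all of $\SH(\R)$ — more specifically, the cocontinuity of $\Phi^{C_2}$ beyond the filtered colimits singled out in its characterization. This is well-known in equivariant stable homotopy theory (it follows, for instance, by combining exactness of $\Phi^{C_2}$ with additivity on wedges of suspension spectra, or from its usual description as smashing with a fixed cofinal object followed by categorical fixed points). If one wishes to avoid invoking this black box, an alternative is to carry out the identification $X(\R) = X(\C)^{C_2}$ first at the unstable pointed level, where all three realization functors are induced from symmetric monoidal left adjoints compatible with $\A^1$-invariance and Nisnevich descent, and then stabilize.
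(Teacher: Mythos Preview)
Your proposal is correct and follows essentially the same approach as the paper: the paper's proof is the displayed chain of equalities on suspension spectra together with the one-line remark that $\Phi^{C_2}$ preserves filtered colimits and $\SH(\R)$ is generated by suspension spectra of schemes. You are simply more explicit about the extension step (exactness, coproducts, compatibility with suspensions) and about the care needed for the cocontinuity of $\Phi^{C_2}$, which the paper leaves implicit; one minor slip is calling the functors ``endofunctors'' when they go between different categories.
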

Bachmann proved an equivalence of $\SH(\R)[\rho^{-1}]$ with $\SH$.
\begin{theorem}[\protect{\cite{Bachmann:real}}]
\label{thm:tom}
Real realization induces an equivalence
\[
\SH(\R)[\rho^{-1}] \to \SH.
\]
\end{theorem}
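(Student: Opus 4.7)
The plan is to construct an inverse to real realization on $\SH(\R)[\rho^{-1}]$ by identifying the $\rho$-inverted category with the motivic stable homotopy category for the real \'etale topology on $\Sm/\R$, and then invoking Scheiderer's theorem that the real \'etale topos over $\R$ is equivalent (up to homotopy) to topological spaces of real points.

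First I would verify that $R_\R$ descends to a functor $\bar{R}_\R : \SH(\R)[\rho^{-1}] \to \SH$. The class $\rho \in \pi_{-1,-1}(\S)$ is represented by the inclusion $\{\pm 1\} \hookrightarrow \Gm$, and since $\R^{\times}$ has two contractible components containing $\pm 1$, the induced map on $\R$-points is a homotopy equivalence. Hence $R_\R(\rho)$ is a self-equivalence of $S^0$ in $\SH$, and the factorization through the $\rho$-localization follows from its universal property. It remains to show that $\bar{R}_\R$ is an equivalence.

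Next I would identify $\SH(\R)[\rho^{-1}]$ with the stable motivic homotopy category built from real \'etale sheaves. The geometric input is that every real \'etale stalk of a smooth $\R$-scheme is real closed, so the group of square classes is generated by $\pm 1$; consequently any real \'etale cover is refined by Nisnevich covers together with maps whose cofibers are built from $\Gm/\{\pm 1\}$, and these cofibers become contractible after inverting $\rho$. Thus $\rho$-inversion enforces precisely real \'etale descent, yielding an equivalence between $\SH(\R)[\rho^{-1}]$ and the real \'etale motivic stable homotopy category.

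Finally I would apply Scheiderer's comparison between the real \'etale topos of $\Sm/\R$ and the site of topological spaces of $\R$-points with their classical topology, using $\A^1$-invariance to translate motivic homotopy into topological homotopy (the affine line realizes to the contractible space $\R$). This identifies the real \'etale motivic stable homotopy category with $\SH$, and a direct check on suspension spectra of smooth schemes shows the resulting equivalence agrees with $\bar{R}_\R$. The main obstacle will be the stable enhancement of the topos-theoretic comparison, namely verifying that $\bar{R}_\R$ is fully faithful and conservative on a set of compact generators after $\rho$-inversion; this requires careful control of derived mapping spaces under the passage from Nisnevich to real \'etale to topological descent, and is the technical heart of Bachmann's argument.
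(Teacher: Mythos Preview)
The paper does not prove this theorem; it is quoted directly from Bachmann's work \cite{Bachmann:real} and no argument is supplied. So there is no ``paper's own proof'' to compare against.

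That said, your sketch is in line with Bachmann's actual strategy: one identifies $\SH(\R)[\rho^{-1}]$ with the stable motivic category for the real \'etale topology and then uses Scheiderer's comparison of the real \'etale site with the site of real points. Your first paragraph (that $R_\R(\rho)$ is invertible, hence $R_\R$ factors through the localization) is correct and easy. The second step, showing that inverting $\rho$ is precisely real \'etale sheafification at the stable level, is the substantive content; your heuristic about square classes at real closed stalks points in the right direction, but the honest work is a descent/hypercompletion argument carried out in Bachmann's paper and is not something one can fill in on the fly. The last step via Scheiderer is again standard once the second step is in place. So the outline is sound, but you should be clear that the middle step is a nontrivial theorem you are citing rather than proving.
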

To summarize, we have a commutative square
\[
\begin{tikzcd}
\SH(\R) \ar[r, "R_\C^{C_2}"]\ar[d] & \SH^{C_2} \ar[d, "\Phi^{C_2}"] \\
\SH(\R)[\rho^{-1}] \ar[r, "R_\R"] & \SH
\end{tikzcd}  
\]
where the bottom horizontal map is an equivalence.

We have $R^{C_2}_\C(\MGL) = \MU$, where $\MU$ is considered as the $C_2$-equivariant complex cobordism spectrum defined by  Landweber \cite{Landweber}.
Indeed, $\MGL \simeq \colim_q \Sigma^{-2q,-q}\Th(\gamma_{q,\infty}),$
and the $C_2$-equivariant complex realization of $\Th(\gamma_{q,\infty})$ are the Thom spaces defining $\MU$ as a $C_2$-spectrum.
Heller and Ormsby showed that the $C_2$-equivariant complex realization of $\MZ$ is $C_2$-equivariant Bredon-cohomology \cite[Theorem 4.17]{HO:galois}.
That is, $\RC(\MZ) = \HZ$, for $\Z$ the constant Mackey functor.

\section{Motivic cohomology of the real numbers and number fields}
\label{sec:motcoh}
In this section we state some results on the structure of motivic cohomology of the real numbers, number fields and rings of $\mathcal{S}$-integers in number fields. For a number field $F$ with $r_1$ real embeddings, let $\mathcal S \supset \{2, \infty\}$ be a (not necessarily finite) set of places in $F$. We denote the ring of $\mathcal S$-integers by $\OFS$.

The Bloch-Kato-conjecture (\cite{Voevodsky:Z2}, \cite{Voevodsky:Zl}) gives an isomorphism of motivic cohomology and
\'{e}tale-cohomology with finite coefficients above the diagonal.
That is,
\begin{equation}
\label{eq:BK}
H^{p,q}(F;A) \cong H^{p,q}_{\et}(F; A), \text{ when } p \leq q,
\end{equation}
and $A$ a finite abelian group.
We have, cf.~\cite[2.1]{DI:adams},
\[
H^{*,*}(\R;\Z/2) \cong \Z/2[\rho,\tau],
\]
where $\rho = [-1] \in \R^\times/(\R^\times)^2 \cong H^{1,1}(\R;\Z/2)$,
and $\tau = [-1] \in \mu_2(\R) \cong H^{0,1}(\R;\Z/2)$.
\begin{lemma}
\label{lem:HR}
Let $n > 1$.
As a $\Z/2^n$-algebra mod $2^n$ motivic cohomology of the real numbers is
\[
H^{*,*}(\R;\Z/2^n) \cong \Z/2^{n}[\rho,\tau, u]/(2\rho, 2\tau, \tau^2).
\]
The elements $\rho$, $\tau$ and $u$ are the generators of the cohomology groups in bidegrees $(1,1)$, $(0,1)$ and $(0,2)$, represented by $-1$, $-1$ and $\zeta_{2^n}$, respectively.
As a $\Z_2$-algebra $2$-complete motivic cohomology of the real numbers is
\[
H^{*,*}(\R;\Z_2) \cong \Z_2[\rho, u]/(2\rho).
\]
\end{lemma}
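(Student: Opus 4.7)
The plan is to identify motivic cohomology of $\R$ with Galois cohomology of $C_2 = \operatorname{Gal}(\C/\R)$ via Bloch-Kato, compute the mod $2^n$ answer from the standard periodic resolution, and then pass to the $2$-complete limit. Specifically, for $p\le q$ Bloch-Kato (equation \eqref{eq:BK}) gives $H^{p,q}(\R;\Z/2^n)\cong H^p_{\et}(\R;\mu_{2^n}^{\otimes q})\cong H^p(C_2;\mu_{2^n}^{\otimes q})$, while $H^{p,q}(\R;\Z/2^n)=0$ for $p>q$. Since $C_2$ acts on $\mu_{2^n}$ by inversion, it acts trivially on $\mu_{2^n}^{\otimes q}$ for $q$ even and by the sign for $q$ odd. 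Running the periodic resolution of $\Z$ over $\Z[C_2]$ one finds $H^0(C_2;\mu_{2^n}^{\otimes q})=\Z/2^n$ if $q$ is even and $\Z/2$ if $q$ is odd, while $H^p(C_2;\mu_{2^n}^{\otimes q})=\Z/2$ for every $1\le p\le q$.

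Next I would identify the generators as $\rho=[-1]\in \R^\times/(\R^\times)^{2^n}=H^1(C_2;\mu_{2^n})$ via Kummer theory, $\tau=-1\in \mu_{2^n}^{C_2}=H^0(C_2;\mu_{2^n})$, and $u$ the class of $\zeta_{2^n}\otimes\zeta_{2^n}$ in $H^0(C_2;\mu_{2^n}^{\otimes 2})=\mu_{2^n}^{\otimes 2}$. The relations $2\rho=0$ and $2\tau=0$ follow from the orders of the corresponding cyclic groups. For $\tau^2=0$, pass to additive notation: $\tau$ corresponds to $2^{n-1}\in\Z/2^n$, so its square in $\mu_{2^n}^{\otimes 2}\cong\Z/2^n$ equals $2^{2n-2}$, which vanishes mod $2^n$ precisely because $n>1$. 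The ring map $\Z/2^n[\rho,\tau,u]/(2\rho,2\tau,\tau^2)\to H^{*,*}(\R;\Z/2^n)$ is then a bijection in each bidegree: for $(p,q)$ with $p\le q$ the source has exactly one monomial, either $\rho^p u^{(q-p)/2}$ or $\rho^p\tau u^{(q-p-1)/2}$ according to the parity of $q-p$, whose cyclic coefficient matches the calculation above.

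For the $\Z_2$ statement I would apply the Milnor short exact sequence
\[
0\to {\lim_n}^{1} H^{p-1,q}(\R;\Z/2^n)\to H^{p,q}(\R;\Z_2)\to \lim_n H^{p,q}(\R;\Z/2^n)\to 0
\]
and analyze the transition maps, which are induced by the coefficient reduction $\mu_{2^{n+1}}^{\otimes q}\to\mu_{2^n}^{\otimes q}$. On the classes $\rho$ and $u$ reduction fixes the chosen generators, producing surjective towers with limits $\Z/2$ and $\Z_2$ respectively, exactly as the formula predicts. On $\tau$, however, reduction sends $2^n\in\Z/2^{n+1}$ to $0\in\Z/2^n$, so every transition involving $\tau$ is zero and $\tau$ dies in the limit. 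Every inverse system appearing is either a tower of surjections or is eventually zero, hence Mittag--Leffler, so the ${\lim}^{1}$ term vanishes. The remaining inverse limit is precisely $\Z_2[\rho,u]/(2\rho)$. The main subtlety will be this last step: identifying the transitions carefully enough to confirm that $\tau$ is killed in the limit while $u$ lifts to an honest $\Z_2$-generator.
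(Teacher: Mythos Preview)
Your proposal is correct and follows essentially the same approach as the paper: reduce to Galois cohomology of $C_2$ via Bloch--Kato, compute with the periodic resolution, and pass to the inverse limit. You add a few details the paper leaves implicit --- the explicit verification of $\tau^2=0$, the monomial count showing the ring map is a bijection, and the Milnor $\lim{}^1$ sequence with a Mittag--Leffler check --- but the underlying argument is the same.
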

Note that $\rho$ maps to $\rho$ and $u$ maps to $\tau^2$ via the projection $H^{*,*}(\R;\Z/2^n) \to H^{*,*}(\R;\Z/2)$.
The element $\tau \in H^{*,*}(\R;\Z/2^n)$ is the image of $\tau \in H^{*,*}(\R;\Z/2)$ through $H^{*,*}(\R;\Z/2) \to H^{*,*}(\R;\Z/2^n)$. We use the same name for $\tau$ and $\rho$ in the various motivic cohomology groups.
\begin{proof}
With \eqref{eq:BK} this reduces to a computation in \'{e}tale cohomology.
That is, we must compute
\[
H^{p}_{\et}(\R;\mu_{2^n}^{\tensor q}) = \Ext^p_{\Z[C_2]}(\Z, \mu_{2^n}^{\tensor q}),
\] where $\Z[C_2] = \Z[x]/(x^2-1)$.
Use the resolution
\[
\dots \xrightarrow{\cdot (x+1)} \Z[C_2] \xrightarrow{\cdot (x-1)} \Z[C_2] \xrightarrow{x \mapsto 1} \Z,
\]
and that $x$ acts on $\mu_{2^n}^{\tensor q} = \Z/2^n$ as $1$ when $q$ is even and $-1$ when $q$ is odd.
Products are formed by forming tensor products of functions.

To obtain the $2$-adic description we take the limit.
The structure maps in the inverse system are induced by the projection maps of the coefficients.
The structure map of $H^{p}_{\et}(\R;\mu_{2^n}^{\tensor q})$ is multiplication by $2$ when $p - q$ is odd. Hence, all multiples of $\tau$ vanish in the limit.
\end{proof}

\begin{theorem}[\protect{\cite[Theorems 14.5, 14.6]{Levine99}}]
\label{thm:hOFS}
Let $\OFS$ be the ring of $\mathcal{S}$-integers,
$\mathcal{S} \supset \{2, \infty\}$,
in a number field $F$ with $r_1$ real embeddings.
Let $\Z_{\mathcal S}$ be the localization of $\Z$ such that the primes not in $\mathcal S$ are invertible.
The motivic cohomology groups $H^{p,q}(\OFS;\Z_{\mathcal S})$ are trivial outside the range $1 \leq p \leq q$ except possibly in the bidegrees $(0,0)$ and $(2,1)$.
We have $H^{0,0}(\OFS;\Z_{\mathcal S}) = \Z_{\mathcal S}$ and $H^{2,1}(\OFS;\Z_{\mathcal S}) = \Pic(\OFS)\tensor\Z_{\mathcal S}$.
For $3 \leq p\leq q$ the real embeddings induce an isomorphism
\[
H^{p,q}(\OFS;\Z_{\mathcal S})
\cong
\oplus^{r_1} H^{p,q}(\R;\Z_2)
\cong
\begin{cases}
(\Z/2)^{r_1} & p \equiv q \bmod 2 \\
0 & p \not\equiv q \bmod 2.
\end{cases}
\]
\end{theorem}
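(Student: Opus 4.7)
The plan is to separate the computation by weight and then by prime, leveraging three tools: direct computation in low weight via the cycle complex / Spitzweck's motivic cohomology of Dedekind domains, Borel's computation of rational $K$-theory of $\OFS$, and the Beilinson--Lichtenbaum theorem (Bloch--Kato) identifying mod-$\ell^n$ motivic cohomology with \'{e}tale cohomology in the range $p \leq q$.

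\emph{Weights $0$ and $1$.} These are handled directly. For a Dedekind domain, the weight $0$ motivic cohomology is concentrated in bidegree $(0,0)$ where it equals the base ring, and the weight $1$ motivic cohomology is concentrated in degrees $1$ and $2$, computing the units $\OFS^\times$ and the Picard group $\Pic(\OFS)$ respectively. Tensoring with $\Z_{\mathcal S}$ produces the two exceptional entries listed in the statement.

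\emph{Weights $q \geq 2$.} First I would show that $H^{p,q}(\OFS;\Z_{\mathcal S})$ is torsion for $p \neq 1$. This uses Borel's theorem on $K_*(\OFS)\tensor \Q$ combined with the Adams-eigenspace decomposition (equivalently, the degeneration of the motivic--to--$K$-theory spectral sequence rationally): all of the rational content in weight $q \geq 2$ sits in bidegree $(1,q)$. Since $\Z_{\mathcal S}$ has invertible every prime $\ell \notin \mathcal S$, the torsion lives at primes in $\mathcal S$. For each such $\ell$, I would invoke the Bloch--Kato theorem in the form
\[
H^{p,q}(\OFS;\Z/\ell^n) \cong H^p_{\et}(\OFS; \mu_{\ell^n}^{\tensor q}) \qquad (p \leq q),
\]
which is legitimate because $\ell$ is invertible in $\OFS$ (as $\ell \in \mathcal S$). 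For odd $\ell$, the Soul\'{e}--Schneider bound $\cd_\ell(\OFS) \leq 2$ gives vanishing for $p \geq 3$, so only the prime $2$ contributes. At $\ell = 2$ and $p \geq 3$, the localization sequence for the inclusion of the set of archimedean places (equivalently, Artin--Verdier duality) yields
\[
H^p_{\et}(\OFS; \mu_{2^n}^{\tensor q}) \cong \bigoplus^{r_1} H^p_{\et}(\R; \mu_{2^n}^{\tensor q}),
\]
and passing to the inverse limit over $n$ and combining with \Cref{lem:HR} produces the stated $(\Z/2)^{r_1}$ answer with the correct parity condition.

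\emph{Vanishing for $p > q$.} This is the last piece: rationally we have vanishing from Borel, and at every prime $\ell \in \mathcal S$ the contribution dies because the \'{e}tale cohomology of $\OFS$ in degrees $\geq 3$ is supported on the real places, whose mod-$2^n$ \'{e}tale cohomology vanishes in the range $p > q$ by \Cref{lem:HR}. The main obstacle is not any single one of these steps but the bookkeeping of the passage between integral, rational, and mod-$\ell^n$ coefficients together with the inverse limit over $n$: one must ensure that no nontrivial extension of an $\ell$-divisible piece by an $\ell$-torsion piece sneaks in, which is what ultimately forces the use of the integral-by-primes strategy rather than a single spectral sequence.
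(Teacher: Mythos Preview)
Your proposal is correct and follows essentially the same route as the paper's sketch: both separate the rational part (Borel/Levine) from the torsion, then handle the torsion prime-by-prime via Beilinson--Lichtenbaum, using $\cd_\ell \leq 2$ for odd $\ell$ and a special argument at $\ell = 2$. The only cosmetic difference is that for the $2$-primary part in degrees $p \geq 3$ the paper invokes Milnor's computation $K^M_p(F)/2 \cong (\Z/2)^{r_1}$ directly, whereas you reach the same conclusion by localization/Artin--Verdier duality reducing \'etale cohomology of $\OFS$ to that of the real places; your formulation has the mild advantage of treating $\OFS$ rather than just $F$.
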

\begin{proof}[Proof sketch]
We only prove the final statement for number fields following \cite[Theorem 14.5]{Levine99}.
By the Beilinson-Lichtenbaum conjecture \cite[Theorem 6.1]{Voevodsky:Z2}, \cite[Theorem 6.17]{Voevodsky:Zl} there is an isomorphism
$
H^{p,q}(F;\Z/\ell) \cong H^{p}_{\et}(F;\mu_{\ell}^{\tensor q}).
$
A number field $F$ has $\ell$-cohomological dimension 2 when $\ell$ is odd \cite[Proposition 8.3.17]{NSW}.
When $\ell = 2$ we have an isomorphism $H^{p,q}(F;\Z/2) \cong K^M_{p}/2\{\tau^q \} \cong  \Z/2$ for $q \geq p \geq 3$ by \cite[Theorem A.2]{Milnor:K-theory}.
Levine shows $H^{p,q}(F;\Q) = 0$ for $q \neq 1$, except for $p=q=0$ \cite[Theorem 14.5]{Levine99}. Combining the torsion part and the rational part implies the statement.
\end{proof}
More detailed descriptions of $H^{1,q}(\OFS;\Z_{\mathcal S})$ and $H^{2,q}(\OFS;\Z_{\mathcal S})$ are given in \cite[14]{Levine99}.

\begin{corollary}
\label{lem:H3R}
Let $F$ be a number field.
Then for $p \geq 3$ the canonical maps
\[
H^{p,q}(F;\Z_{\mathcal S})
\xrightarrow{\cong} H^{p,q}(F;\Z_2)
\xrightarrow{\cong} \oplus^{r_1} H^{p,q}(\R;\Z_2)
\]
are isomorphisms.
\end{corollary}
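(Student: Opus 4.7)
The plan is to deduce this corollary from \Cref{thm:hOFS} by passing to the colimit $F = \colim_{\mathcal S} \OFS$, using the fact that motivic cohomology of regular rings commutes with filtered colimits of affine schemes. The key observation that makes all three coefficient rings give the same answer is that for $p \geq 3$ the group $H^{p,q}(F;\Z)$ is finite and $2$-torsion.

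I would first verify the $2$-torsion claim. Above the diagonal ($p > q$) motivic cohomology of a field vanishes. In the range $3 \leq p \leq q$, the rational part $H^{p,q}(F;\Q)$ vanishes by Levine's theorem recalled in the proof of \Cref{thm:hOFS}, and the odd-primary part vanishes since the $\ell$-cohomological dimension of a number field is $2$ for odd $\ell$, so Beilinson-Lichtenbaum gives $H^{p,q}(F;\Z/\ell) = 0$ for $p \geq 3$. The Bockstein long exact sequence then pins $H^{p,q}(F;\Z)$ as a $2$-torsion group, finite by Milnor's computation $K^M_p(F)/2 \cong (\Z/2)^{r_1}$. The analogous statement for $\OFS$ follows directly from \Cref{thm:hOFS}. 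On a finite $2$-torsion abelian group the natural maps
\[
H^{p,q}(-;\Z) \to H^{p,q}(-;\Z_{\mathcal S}) \to H^{p,q}(-;\Z_2)
\]
are both isomorphisms, since inverting odd primes acts as the identity and $2$-completion is the identity on finite $2$-groups. This gives the first isomorphism in the corollary.

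For the second isomorphism, motivic cohomology commutes with the filtered colimit $F = \colim_{\mathcal S} \OFS$, so
\[
H^{p,q}(F;\Z_2) \cong \colim_{\mathcal S} H^{p,q}(\OFS;\Z_2) \cong \colim_{\mathcal S} \oplus^{r_1} H^{p,q}(\R;\Z_2) \cong \oplus^{r_1} H^{p,q}(\R;\Z_2),
\]
where the middle isomorphism is \Cref{thm:hOFS} after the coefficient reduction above, and the transition maps in the colimit are identities because the identification with $\oplus^{r_1} H^{p,q}(\R;\Z_2)$ is induced by the real embeddings of $F$, which do not depend on $\mathcal S$.

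The step I expect to require the most care is the naturality check that the isomorphism of \Cref{thm:hOFS} is compatible with the inclusions $\OFS \hookrightarrow \OO_{F,\mathcal S'}$, i.e.~that the transition maps in the colimit really are identities under the real-embedding identification. This is the main obstacle, but it is essentially automatic from the fact that both sides are identified via the same family of real embeddings of $F$.
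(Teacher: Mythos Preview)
Your argument is correct, but the colimit step is an unnecessary detour compared to how the paper intends this corollary to follow. The proof sketch of \Cref{thm:hOFS} is explicitly carried out for the number field $F$ itself (and $F$ is literally a ring of $\mathcal S$-integers for $\mathcal S$ the set of all places), so the identification $H^{p,q}(F;\Z)\cong \oplus^{r_1}H^{p,q}(\R;\Z_2)$ for $p\geq 3$ is already available without passing through a filtered colimit. Once that is in hand, your observation that these groups are finite $2$-torsion immediately gives the coefficient identifications $\Z\to\Z_{\mathcal S}\to\Z_2$, and you are done.

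So the paper's route is: apply \Cref{thm:hOFS} directly to $F$, then change coefficients using the $2$-torsion observation. Your route instead establishes the $2$-torsion first, then uses it on each $\OFS$, and finally takes a colimit over $\mathcal S$. Both are valid, but yours costs you the continuity input for motivic cohomology and the naturality check you flag at the end, neither of which is needed. A minor point worth tightening if you keep the colimit version: continuity is standard for integral (or $\Z/2^n$) motivic cohomology along open immersions of Dedekind schemes, but for $\Z_2 = \lim_n \Z/2^n$ coefficients you are implicitly using that the groups are finite $2$-groups so that the inverse limit over $n$ is eventually constant and hence commutes with the filtered colimit over $\mathcal S$. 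You have already established that fact, so this is fine, but it deserves a sentence.
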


\begin{lemma}
\label{lem:HFR}
\label{lem:H12R}
Let $\OFS$ be a number field or its ring of $\mathcal{S}$-integers.
Then the canonical map
\[
H^{2,q}(\OFS;\Z_{\mathcal S}) \twoheadrightarrow \oplus^{r_1} H^{2,q}(\R;\Z_2)
\]
induced by the real embeddings
is surjective.
\end{lemma}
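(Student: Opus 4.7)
The plan is a two-step reduction: first from $\Z_{\mathcal S}$-coefficients to $\Z/2$ via a Bockstein argument, then from motivic to étale cohomology via Bloch-Kato; the resulting question becomes a classical surjectivity for $2$-torsion Brauer groups that I would settle by Brauer-Hasse-Noether reciprocity.

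By \Cref{lem:HR}, the target $H^{2,q}(\R;\Z_2)$ is nonzero only for even $q \geq 2$, where it equals $\Z/2\cdot \rho^2 u^{(q-2)/2}$; I may thus restrict to this range. For such $q$, \Cref{lem:HR} also gives $H^{3,q}(\R;\Z_2) = 0$, and combining this with \Cref{thm:hOFS} yields $H^{3,q}(\OFS;\Z_{\mathcal S}) = 0$. Consequently the Bockstein long exact sequences associated to $0 \to \Z_{\mathcal S} \xrightarrow{\cdot 2} \Z_{\mathcal S} \to \Z/2 \to 0$ and its $\Z_2$-analogue produce isomorphisms $H^{2,q}(\OFS;\Z_{\mathcal S})/2 \cong H^{2,q}(\OFS;\Z/2)$ and $H^{2,q}(\R;\Z_2) \cong H^{2,q}(\R;\Z/2)$, so the claimed surjectivity is equivalent to surjectivity of the mod-$2$ version
\[
H^{2,q}(\OFS;\Z/2) \twoheadrightarrow \bigoplus^{r_1} H^{2,q}(\R;\Z/2).
\]

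Next I would invoke Bloch-Kato \eqref{eq:BK}, extended to the Dedekind domain $\OFS$ via the work of Geisser-Levine and Spitzweck, to identify $H^{2,q}(\OFS;\Z/2) \cong H^2_{\et}(\OFS;\mu_2)$ for $q \geq 2$ (using trivial Galois action on $\mu_2$), and likewise over $\R$. The Kummer short exact sequence
\[
0 \to \Pic(\OFS)/2 \to H^2_{\et}(\OFS;\mu_2) \to \operatorname{Br}(\OFS)[2] \to 0
\]
and its $\R$-analogue (where $\Pic(\R) = 0$ gives $H^2_{\et}(\R;\mu_2) = \operatorname{Br}(\R)[2] = \Z/2$) further reduce the question to showing that $\operatorname{Br}(\OFS)[2] \twoheadrightarrow \bigoplus^{r_1} \operatorname{Br}(\R)[2]$ is surjective.

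This last surjectivity follows from the Brauer-Hasse-Noether reciprocity sequence
\[
0 \to \operatorname{Br}(F) \to \bigoplus_v \operatorname{Br}(F_v) \xrightarrow{\sum_v \operatorname{inv}_v} \Q/\Z \to 0
\]
together with the description of $\operatorname{Br}(\OFS) \subset \operatorname{Br}(F)$ as those Brauer classes with zero local invariant at every finite place $v \notin \mathcal{S}$. Given any sign vector $(\epsilon_i) \in (\Z/2)^{r_1}$, I prescribe invariant $\epsilon_i/2$ at the $i$-th real place, $0$ at complex archimedean places and at finite $v \notin \mathcal{S}$, and exploit the hypothesis $2 \in \mathcal{S}$ to adjust the invariant at a place above $2$ so that the global sum vanishes in $\Q/\Z$. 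The main subtlety is invoking Bloch-Kato for the Dedekind ring $\OFS$ rather than for the field $F$; this is covered by the cited literature, and when $\OFS = F$ itself the unramifiedness condition is vacuous and no such adjustment at a finite place is needed.
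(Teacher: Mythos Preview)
Your proof is correct, and it shares the opening Bockstein reduction with the paper: both use $H^{3,q}(\OFS;\Z_{\mathcal S})=0$ for even $q$ to replace $\Z_{\mathcal S}$-coefficients by $\Z/2$. After that the two arguments diverge. The paper stays in motivic cohomology: it multiplies by $\rho$ to push the question from $H^{2,q}$ to $H^{3,q+1}$, where the comparison with $\oplus^{r_1}H^{3,q+1}(\R;\Z/2)$ is already an isomorphism by \Cref{thm:hOFS}; the one nontrivial ingredient is the surjectivity of $\rho$-multiplication $H^{2,q}(\OFS;\Z/2)\to H^{3,q+1}(\OFS;\Z/2)$, imported from \cite[Lemma 7.18]{KRO}. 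You instead translate the mod-$2$ statement into \'etale cohomology via Beilinson--Lichtenbaum for $\OFS$, then via Kummer into a surjectivity for $2$-torsion Brauer groups, and finish with Brauer--Hasse--Noether reciprocity.

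Your route is more arithmetic and self-contained in that it avoids the external reference to \cite{KRO}, but it trades that for global class field theory and the need to invoke Beilinson--Lichtenbaum over the Dedekind ring $\OFS$ (which you correctly flag as the delicate point). The paper's route is lighter-weight once the cited lemma is granted and fits the paper's broader theme of exploiting $\rho$-multiplication. A small wording quibble: even when $\OFS=F$ you still need to adjust at \emph{some} finite place when $\sum_i\epsilon_i$ is odd; the point is only that there is no unramifiedness constraint on which place you choose.
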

\begin{proof}
By \Cref{lem:HR} we may assume $q$ is even.
Consider the commutative diagram
\[
\begin{tikzcd}
H^{2,q}(\OFS;\Z_{\mathcal S}) \ar[r]\ar[d, "\pr"] & \oplus^{r_1}H^{2,q}(\R;\Z_2)\ar[d, "\pr"] \\
H^{2,q}(\OFS;\Z/2) \ar[r]\ar[d, "\rho"] & \oplus^{r_1}H^{2,q}(\R;\Z/2) \ar[d, "\rho"] \\
H^{3,q+1}(\OFS;\Z/2) \ar[r] & \oplus^{r_1}H^{3,q+1}(\R;\Z/2).
\end{tikzcd}
\]
The vertical maps in the right column are isomorphisms by \Cref{lem:HR}.
The bottom horizontal map is an isomorphism by \Cref{thm:hOFS}. %
Multiplication by $\rho$ induces a surjective map $H^{2,q}(\OFS;\Z/2) \to H^{3,q+1}(\OFS;\Z/2)$ by \cite[Lemma 7.18]{KRO}.
The projection $\pr: H^{2,q}(\OFS;\Z_{\mathcal S}) \to H^{2,q}(\OFS;\Z/2)$ has cokernel $H^{3,q}(\OFS;\Z_{\mathcal S}) = 0$.
Hence the top horizontal map is surjective.
\end{proof}
\begin{remark}
In general, the map $H^{1,q}(\OFS;\Z_{\mathcal S}) \to \oplus^{r_1} H^{1,q}(\R;\Z_2)$ has a nontrivial cokernel,
cf.~\cite{Rognes}.
However, the cokernels are isomorphic for $q \geq 1$ of the same parity.
Indeed, for $q$ odd this follows from the commutative diagram 
\[
\begin{tikzcd}
H^{1,q}(\OFS;\Z_{\mathcal S}) \ar[r]\ar[d, "\pr"] & \oplus^{r_1}H^{1,q}(\R;\Z_2)\ar[d, "\pr"] \\
H^{1,q}(\OFS;\Z/2) \ar[r]\ar[d, "\tau^2"] & \oplus^{r_1}H^{1,q}(\R;\Z/2) \ar[d, "\tau^2"] \\
H^{1,q+2}(\OFS;\Z/2) \ar[r] & \oplus^{r_1}H^{1,q+2}(\R;\Z/2) \\
H^{1,q+2}(\OFS;\Z_{\mathcal S}) \ar[r]\ar[u, "\pr"] & \oplus^{r_1}H^{1,q+2}(\R;\Z_2)\ar[u, "\pr"]
\end{tikzcd}
\]
since $\pr$ is an isomorphism over $\R$ and $\tau$ is an isomorphism on mod 2 motivic cohomology.
\end{remark}

\section{The slice spectral sequence of algebraic cobordism}
\label{sec:slice}
In this section we construct the slices spectral sequence of $\MGL$ and discuss its multiplicative and strong convergence properties.
The slice filtration was originally introduced by Voevodsky in \cite{Voevodsky:open}. In \cite{RO:hermitian}, \cite{RSO:April1} and \cite{KRO} there are slice spectral sequence computations similar in spirit to ours.
The slice filtration is obtained by considering the triangulated subcategory $\SH^{\eff}(F) \subset \SH(F)$ generated by suspension spectra of smooth schemes.
We then filter by $i_q : \Sigma^{2q,q}\SH^{\eff}(F) \hookrightarrow \SH(F), q \in \Z$. Each inclusion $i_q$ has a right adjoint $r_q$,
and the composite $\f_q = i_q \circ r_q$ is defined to be the $q$th effective  cover.
The cofiber $\s_q(-) = \cofib(\f_{q+1}(-) \to \f_q(-))$ is the $q$th slice functor,
and these functors assemble to a tower of cofiber sequences which gives rise to a spectral sequence in the usual way. Since $\SH^{\eff}(F)$ is a triangulated category, the functors $\f_q$ and $\s_q$ are exact.
It is possible to consider a finer filtration of $\SH(F)$ by considering the subcategory $\SH^\veff(F) \subset \SH^\eff(F)$ consisting of the spectra which are connective in Morel's $t$-structure \cite{Bachmann:veff}. For $\MGL$ these filtrations agree since $\f_q(\MGL)$ is $q$-connective, see \Cref{lem:veff-agree}. %

Recall that the slices of $\MGL$ are \cite{Voevodsky:open}, \cite[Theorem 4.7]{Spitzweck:slices}, \cite[Theorem 8.5]{Hoyois:fromto}
\[
\s_q(\MGL) = \Sigma^{2q,q}\MZ \tensor L_q,
\]
compatible with the ring map $L_* \to \MGL_{2*,*}$.
Here $L_* = \Z[x_1, x_2, \cdots]$ is the Lazard ring,
with generators $x_i$ in degree $i$ (i.e., half of the usual indexing).
Since the slices are modules over $\s_0(\S) = \MZ$ \cite[Theorem 3.6.22]{Pelaez} the multiplicative structure on the slices $\s_*(\MGL)$ is the one induced from the product on $\MZ$ and $L_*$.
The slice spectral sequence of $\MGL$ is obtained by taking homotopy groups in the slice tower of $\MGL$.
\[
\begin{tikzcd}
\dots \ar[r] & \f_{q+1}(\MGL) \ar[r]\ar[d] & \f_{q}(\MGL) \ar[r]\ar[d] & \f_{q-1}(\MGL) \ar[r]\ar[d] & \dots \ar[r] & \f_0(\MGL) = \MGL \\
& \s_{q+1}(\MGL) & \s_{q}(\MGL) & \s_{q-1}(\MGL) & 
\end{tikzcd}
\]
This spectral sequence has $E^1$-page
\[
E^1_{p,q,w} = \pi_{p,w}\s_q(\MGL),
\]
and $d^r$-differential
\[
d^r : E^r_{p,q,w} \to E^r_{p-1,q+r,w}.
\]

By the work of Pelaez the diagrams
\[
\begin{tikzcd}
\f_{q+1}(\MGL)\wedge \f_{q'}(\MGL) \ar[r]\ar[d] & \f_{q+q'+1}(\MGL) \ar[d] \\
\f_{q}(\MGL)\wedge \f_{q'}(\MGL) \ar[r] & \f_{q+q'}(\MGL)
\end{tikzcd}
\]
in $\SH(F)$ are induced by a zigzag of commutative diagrams in motivic symmetric spectra \cite[Theorem 3.6.16]{Pelaez}.
Hence the slice filtration gives rise to a multiplicative Cartan-Eilenberg system,
and the slice spectral sequence is a multiplicative spectral sequence, see also \cite{GRSO}.
For formal reasons the slice spectral sequence is conditionally convergent to the homotopy groups of the slice completion (\cite[8.4]{Hoyois:fromto}, \cite[Definition 3.1]{RSO:April1}) of $\MGL$.
Since $\MGL$ is slice complete \cite[Lemma 8.10, Corollary 2.4]{Hoyois:fromto}, i.e., $\MGL$ is equivalent to its slice completion, the slice spectral sequence is conditionally convergent to the homotopy groups of $\MGL$.
This is also true over rings of $\mathcal S$-integers for $\MGL$ localized at $\mathcal S$, $\MGL_{\mathcal S}$, see \cite{Spitzweck:mixed}.
Since in a fixed tridegree $(p,q,w)$ there are only finitely many entering and exiting differentials of $E^1_{p,q,w}$, the conditional convergence is in fact strong. That is, we have a strongly convergent spectral sequence
\[
E^1_{p,q,w} = \pi_{p,w}\s_q(\MGL) = H^{2q-p,q-w}(F;\Z)\tensor L_q \implies \pi_{p,w}(\MGL).
\]
See \Cref{fig:E1-page} for a picture of the spectral sequence.
Similarly we also have strong convergence of the slice spectral sequence for $\MGL/2^n$.
Its slices are
\[
\s_q(\MGL/2^n) = \Sigma^{2q,q}\MZ/2^n \tensor L_q.
\]

Consider the system of maps
\begin{equation}
\label{eq:2n-sys}
\begin{tikzcd}
\MGL \ar[r, "2^n"] & \MGL \ar[r] & \MGL/2^n \ar[r] & \Sigma^{1,0}\MGL \\
\MGL \ar[r, "2^{n+1}"]\ar[u, "2"] & \MGL \ar[r]\ar[u] & \MGL/2^{n+1} \ar[r]\ar[u] & \Sigma^{1,0}\MGL \ar[u, "2"]
\end{tikzcd}
\end{equation}
and the induced limit of spectral sequences
\[
E^r_{p,q,w}(\Z_2) = \lim_n E^r_{p,q,w}(\Z/2^n).
\]
In general we make no claims about $\{E^r_{p,q,w}(\Z_2)\}_r$ being a spectral sequence or its convergence.
However, if the groups $E^\infty_{p,q,w}(\Z/2^n)$ are finite
then $E^\infty_{p,q,w}(\Z_2)$ is the associated graded of an exhaustive, Hausdorff and complete filtration of $\pi_{*,*}(\MGL_2^{\wedge})$.
This is the case over the real numbers and rings of $\mathcal S$-integers when $\mathcal S$ is finite.
\begin{figure}
  
\begin{tikzpicture}[font=\tiny,scale=0.4]
{\node[above right=0pt] at (0,0.4) { $0,0$};}
{\draw (0,0) -- (2,0);}
{\draw (2,0) -- (2,2);}
{\draw (2,2) -- (0,2);}
{\draw (0,2) -- (0,0);}
{\node[above right=0pt] at (4,2.4) { $0,1$};}
{\draw (4,2) -- (6,2);}
{\draw (6,2) -- (6,4);}
{\draw (6,4) -- (4,4);}
{\draw (4,4) -- (4,2);}
{\node[above right=0pt] at (2,2.4) { $1,1$};}
{\draw (2,2) -- (4,2);}
{\draw (4,2) -- (4,4);}
{\draw (4,4) -- (2,4);}
{\draw (2,4) -- (2,2);}
{\node[above right=0pt] at (8,4.4) { $0,2$};}
{\draw (8,4) -- (10,4);}
{\draw (10,4) -- (10,6);}
{\draw (10,6) -- (8,6);}
{\draw (8,6) -- (8,4);}
{\node[above right=0pt] at (6,4.4) { $1,2$};}
{\draw (6,4) -- (8,4);}
{\draw (8,4) -- (8,6);}
{\draw (8,6) -- (6,6);}
{\draw (6,6) -- (6,4);}
{\node[above right=0pt] at (4,4.4) { $2,2$};}
{\draw (4,4) -- (6,4);}
{\draw (6,4) -- (6,6);}
{\draw (6,6) -- (4,6);}
{\draw (4,6) -- (4,4);}
{\node[above right=0pt] at (12,6.4) { $0,3$};}
{\draw (12,6) -- (14,6);}
{\draw (14,6) -- (14,8);}
{\draw (14,8) -- (12,8);}
{\draw (12,8) -- (12,6);}
{\node[above right=0pt] at (10,6.4) { $1,3$};}
{\draw (10,6) -- (12,6);}
{\draw (12,6) -- (12,8);}
{\draw (12,8) -- (10,8);}
{\draw (10,8) -- (10,6);}
{\node[above right=0pt] at (8,6.4) { $2,3$};}
{\draw (8,6) -- (10,6);}
{\draw (10,6) -- (10,8);}
{\draw (10,8) -- (8,8);}
{\draw (8,8) -- (8,6);}
{\node[above right=0pt] at (6,6.4) { $3,3$};}
{\draw (6,6) -- (8,6);}
{\draw (8,6) -- (8,8);}
{\draw (8,8) -- (6,8);}
{\draw (6,8) -- (6,6);}
{\node[above right=0pt] at (-5.0,0) {\normalsize $L_{w} \otimes $};}
{\node[above right=0pt] at (-3.0,2) {\normalsize $L_{w + 1} \otimes $};}
{\node[above right=0pt] at (-1.0,4) {\normalsize $L_{w + 2} \otimes $};}
{\node[above right=0pt] at (1.0,6) {\normalsize $L_{w + 3} \otimes $};}
{\node[above right=0pt] at (8,8) {\large $\iddots$};}
{\node[above right=0pt] at (10,8) {\large $\vdots$};}
{\node[above right=0pt] at (12,8) {\large $\vdots$};}
{\node[above right=0pt] at (15.0,15.0) {\large $\iddots$};}
{\node[above right=0pt] at (18.0,15.0) {\large $\vdots$};}
{\node[above right=0pt] at (20.0,15.0) {\large $\vdots$};}
{\node[above right=0pt] at (22.0,15.0) {\large $\vdots$};}
{\node[above right=0pt] at (24.0,15.0) {\large $\vdots$};}
{\node[above right=0pt] at (9.8,10.6) { $q,$};}
{\node[above right=0pt] at (9.8,10) { $q$};}
{\draw (10,10) -- (12,10);}
{\draw (12,10) -- (12,12);}
{\draw (12,12) -- (10,12);}
{\draw (10,12) -- (10,10);}
{\node[above right=0pt] at (11.8,10.6) { $q - 1,$};}
{\node[above right=0pt] at (11.8,10) { $q$};}
{\draw (12,10) -- (14,10);}
{\draw (14,10) -- (14,12);}
{\draw (14,12) -- (12,12);}
{\draw (12,12) -- (12,10);}
{\node[above right=0pt] at (11.8,12.6) { $q + 1,$};}
{\node[above right=0pt] at (11.8,12) { $q + 1$};}
{\draw (12,12) -- (14,12);}
{\draw (14,12) -- (14,14);}
{\draw (14,14) -- (12,14);}
{\draw (12,14) -- (12,12);}
{\node[above right=0pt] at (5.0,10) {\normalsize $L_{w+q} \otimes$};}
{\node[above right=0pt] at (7.0,12) {\normalsize $L_{w+q + 1} \otimes$};}
{\node[above right=0pt] at (16,10) {\large $\cdots$};}
{\node[above right=0pt] at (16,12) {\large $\cdots$};}
{\node[above right=0pt] at (19.8,10.6) { $0,$};}
{\node[above right=0pt] at (19.8,10) { $q$};}
{\draw (20,10) -- (22,10);}
{\draw (22,10) -- (22,12);}
{\draw (22,12) -- (20,12);}
{\draw (20,12) -- (20,10);}
{\node[above right=0pt] at (17.8,10.6) { $1,$};}
{\node[above right=0pt] at (17.8,10) { $q$};}
{\draw (18,10) -- (20,10);}
{\draw (20,10) -- (20,12);}
{\draw (20,12) -- (18,12);}
{\draw (18,12) -- (18,10);}
{\node[above right=0pt] at (23.8,12.6) { $0,$};}
{\node[above right=0pt] at (23.8,12) { $q + 1$};}
{\draw (24,12) -- (26,12);}
{\draw (26,12) -- (26,14);}
{\draw (26,14) -- (24,14);}
{\draw (24,14) -- (24,12);}
{\node[above right=0pt] at (21.8,12.6) { $1,$};}
{\node[above right=0pt] at (21.8,12) { $q + 1$};}
{\draw (22,12) -- (24,12);}
{\draw (24,12) -- (24,14);}
{\draw (24,14) -- (22,14);}
{\draw (22,14) -- (22,12);}
{\node[above right=0pt] at (19.8,12.6) { $2,$};}
{\node[above right=0pt] at (19.8,12) { $q + 1$};}
{\draw (20,12) -- (22,12);}
{\draw (22,12) -- (22,14);}
{\draw (22,14) -- (20,14);}
{\draw (20,14) -- (20,12);}
{\node[above right=0pt] at (17.8,12.6) { $3,$};}
{\node[above right=0pt] at (17.8,12) { $q + 1$};}
{\draw (18,12) -- (20,12);}
{\draw (20,12) -- (20,14);}
{\draw (20,14) -- (18,14);}
{\draw (18,14) -- (18,12);}
{\node[above right=0pt] at (19.8,20.6) { $2q,$};}
{\node[above right=0pt] at (19.8,20) { $2q$};}
{\draw (20,20) -- (22,20);}
{\draw (22,20) -- (22,22);}
{\draw (22,22) -- (20,22);}
{\draw (20,22) -- (20,20);}
{\node[above right=0pt] at (21.8,20.6) { $2q - 1,$};}
{\node[above right=0pt] at (21.8,20) { $2q$};}
{\draw (22,20) -- (24,20);}
{\draw (24,20) -- (24,22);}
{\draw (24,22) -- (22,22);}
{\draw (22,22) -- (22,20);}
{\node[above right=0pt] at (21.8,22.6) { $2q + 1,$};}
{\node[above right=0pt] at (21.8,22) { $2q + 1$};}
{\draw (22,22) -- (24,22);}
{\draw (24,22) -- (24,24);}
{\draw (24,24) -- (22,24);}
{\draw (22,24) -- (22,22);}
{\node[above right=0pt] at (23.8,24.6) { $2q + 2,$};}
{\node[above right=0pt] at (23.8,24) { $2q + 2$};}
{\draw (24,24) -- (26,24);}
{\draw (26,24) -- (26,26);}
{\draw (26,26) -- (24,26);}
{\draw (24,26) -- (24,24);}
{\node[above right=0pt] at (23.8,22.6) { $2q,$};}
{\node[above right=0pt] at (23.8,22) { $2q + 1$};}
{\draw (24,22) -- (26,22);}
{\draw (26,22) -- (26,24);}
{\draw (26,24) -- (24,24);}
{\draw (24,24) -- (24,22);}
{\node[above right=0pt] at (17.8,18.6) { $2q - 1,$};}
{\node[above right=0pt] at (17.8,18) { $2q - 1$};}
{\draw (18,18) -- (20,18);}
{\draw (20,18) -- (20,20);}
{\draw (20,20) -- (18,20);}
{\draw (18,20) -- (18,18);}
{\node[above right=0pt] at (19.8,18.6) { $2q - 2,$};}
{\node[above right=0pt] at (19.8,18) { $2q - 1$};}
{\draw (20,18) -- (22,18);}
{\draw (22,18) -- (22,20);}
{\draw (22,20) -- (20,20);}
{\draw (20,20) -- (20,18);}
{\node[above right=0pt] at (13.0,18) {\normalsize $L_{w+2q - 1} \otimes $};}
{\node[above right=0pt] at (15.0,20) {\normalsize $L_{w+2q} \otimes $};}
{\node[above right=0pt] at (17.0,22) {\normalsize $L_{w+2q + 1} \otimes $};}
{\node[above right=0pt] at (19.0,24) {\normalsize $L_{w+2q + 2} \otimes $};}
{\node[above right=0pt] at (0,-2) {\footnotesize $\pi_{2w, w}$};}
{\node[above right=0pt] at (2,-4) {\footnotesize $\pi_{2w - 1, w}$};}
{\node[above right=0pt] at (4,-2) {\footnotesize $\pi_{2w - 2, w}$};}
{\node[above right=0pt] at (6,-4) {\footnotesize $\pi_{2w - 3, w}$};}
{\node[above right=0pt] at (8,-2) {\footnotesize $\pi_{2w - 4, w}$};}
{\node[above right=0pt] at (10,-4) {\footnotesize $\pi_{2w - 5, w}$};}
{\node[above right=0pt] at (12,-2) {\footnotesize $\pi_{2w - 6, w}$};}
{\node[above right=0pt] at (16,-3.0) {\large $\cdots$};}
{\node[above right=0pt] at (18,-4) {\footnotesize $\pi_{2w-2q - 1, w}$};}
{\node[above right=0pt] at (20,-2) {\footnotesize $\pi_{2w-2q, w}$};}
{\node[above right=0pt] at (22,-4) {\footnotesize $\pi_{2w-2q + 1, w}$};}
{\node[above right=0pt] at (24,-2) {\footnotesize $\pi_{2w-2q + 2, w}$};}
{\draw[->] (-5.0,-0.5) -- (-5.0,26.5);}
{\draw[->] (-5.0,-0.5) -- (29.0,-0.5);}
{\node[above right=0pt] at (-5.0,25.5) { $q$};}
{\node[above right=0pt] at (28.0,-0.5) { $p$};}
\end{tikzpicture}

  \caption{The $E^1$-page of the slice spectral sequence of $\MGL$ with Adams grading in weight $w$.
    Each box with indices $(p, q)$ represents a copy of $H^{p,q}(F;\Z)$ tensored with $L_{w + q}$.
    The $d^r$-differentials goes one step to the left and $r$-steps upwards.
    The filtration degree is along the vertical $q$-axis, so the abutment is read off from the $E^\infty$-page vertically.
    The motivic homotopy group each column contributes to is indicated below the $E^1$-page.
  }
  \label{fig:E1-page}
\end{figure}
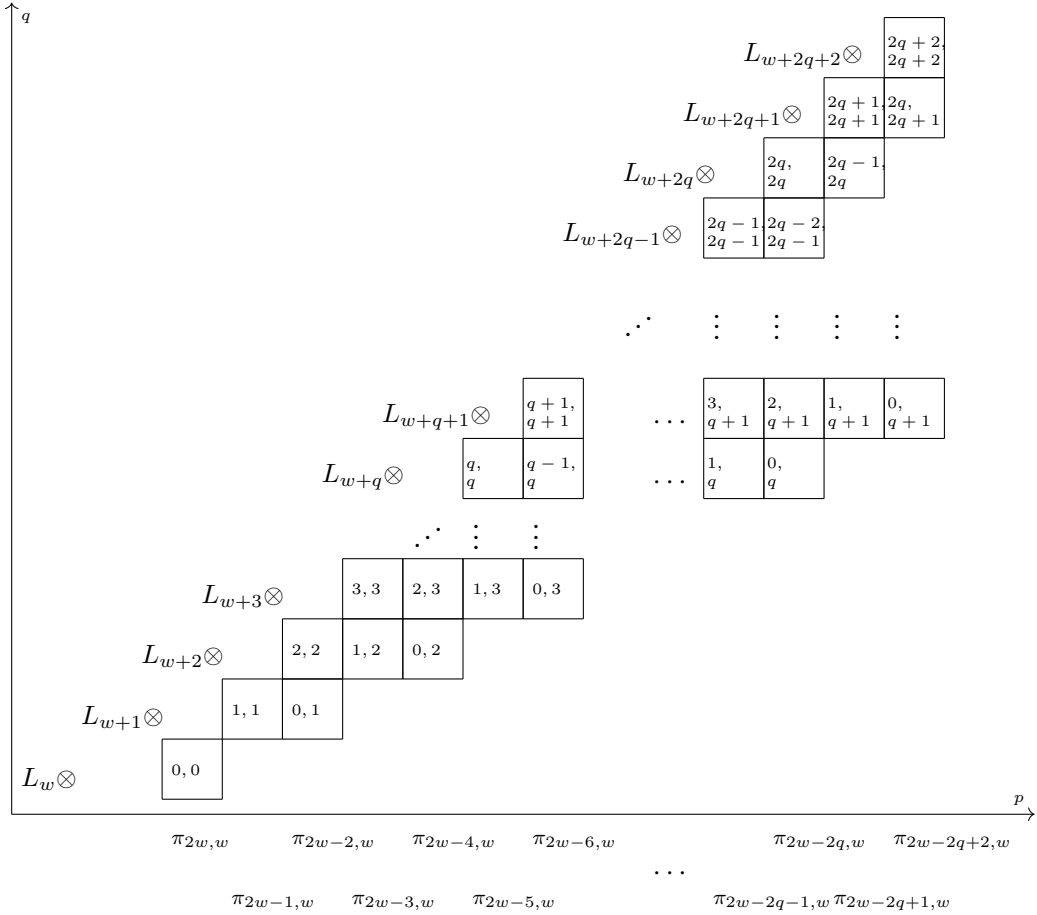

\section{Algebraic cobordism of the real numbers}
\label{sec:MGLR}
In this section we compute $\MGL_{*,*}(\R;\Z_2)$.
We first consider the slice spectral sequence which converges to $\MGL_{*,*}(\R;\Z/2^n)$, $n > 1$, and calculate its $E^\infty$-page.
Then we pass to the limit to obtain the homotopy groups of the $2$-complete algebraic cobordism spectrum $\MGL_{2}^{\wedge}$. Throughout this section we always assume $n > 1$.
\subsection*{Mod $2^n$ calculations}
Since $\MGL/2^n$ is a ring spectrum for $n > 1$, cf.~\cite{Oka},
the slice spectral sequence is multiplicative,
and we have a Leibniz rule. %
Hence, it suffices to determine the differentials on the algebra generators.
As an algebra, $E^1_{*,*,*}$ is generated by
$E^1_{-1,0,-1} = H^{1,1}\tensor L_0$,
$E^1_{0,0,-1}  = H^{0,1}\tensor L_0$,
$E^1_{0,0,-2}  = H^{0,2}\tensor L_0$
and
$E^1_{2i,i,i}  = H^{0,0}\tensor L_i$, for $i \in \Z$.
The algebra generators are $\rho$, $\tau$, $u$ and $x_i$ in degree $(-1, 0, -1)$, $(0, 0, -1)$, $(0, 0, -2)$ and $(2i, i, i)$.
For degree reasons, the only algebra generators on the $E^r$-page which can support differentials are powers of $u$ in $E^r_{0,0,2*} \cong H^{0,2*}\tensor L_0$.
Their differentials are described in \Cref{thm:diffs} below.

We need some results of \cite{HHR} for the group $G = C_2$.
In this case the norm map $N = N^{C_2}_{C_2}$ is the identity \cite[Proposition 2.27]{HHR},
and the norm construction on $\MU$ is simply $\MU$, that is $\MU^{((C_2))} = \MU$ \cite[5.1]{HHR}.
We have a factorization $L_* \to \pi_{2*,*}(\MGL) \to \pi_{*(1+\sigma)}^{C_2}(\MU)$,
and choose algebra generators $x_i$ of $L_*$ which map to the $\bar{r}_i \in \pi_{i + i \sigma}^{C_2}(\MU)$ defined in \cite[5.4.2]{HHR}. %
We let the image of the $x_i \in \pi_{2i,i}(\MGL)$ (also denoted $x_i$) be the canonical choice of algebra generators for $\pi_{2*,*}(\MGL)$.
The geometric fixed points of $\MU$ is the unoriented cobordism spectrum $\MO$.
\begin{proposition}[\protect{\cite[Proposition 5.50]{HHR}}]
\label{lem:HHR1}
The canonical map
\[
\pi_{*(1 + \sigma)}(\MU) \to \pi_*(\Phi^{C_2}(\MU)) \cong \pi_*(\MO)
\]
maps $x_i$ to zero if and only if $i = 2^k - 1$ for some $k$.
\end{proposition}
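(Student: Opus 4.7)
The plan is to combine the geometric identification $\Phi^{C_2}(\MU) \simeq \MO$ with Thom's computation of $\pi_*(\MO)$ and the $[2]$-series of the universal formal group law, exploiting that $\Phi^{C_2}$ is symmetric monoidal so the induced map is a ring homomorphism.

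First I would establish the geometric fixed-point identification: since $(\C P^\infty)^{C_2} = \R P^\infty$ under complex conjugation and the $C_2$-fixed subbundle of the tautological complex line bundle is the tautological real line bundle, passing to Thom spaces level-wise and taking colimits yields $\Phi^{C_2}(\MU) \simeq \MO$. Under this identification the map $\pi_{*(1+\sigma)}^{C_2}(\MU) \to \pi_*(\MO)$ becomes a graded ring homomorphism into Thom's polynomial ring $\pi_*(\MO) \cong \F_2[y_i : i \neq 2^k - 1]$.

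Next I would reduce to a formal group law calculation. By construction the class $\bar{r}_i \in \pi_{i(1+\sigma)}^{C_2}(\MU)$ restricts along the trivial subgroup to a Lazard generator $x_i \in L_i \cong \pi_{2i}(\MU)$, and the Quillen complex orientation of $\MU$ descends under $\Phi^{C_2}$ to the canonical orientation of $\MO$. Hence the image of $\bar{r}_i$ in $\pi_i(\MO)$ equals the image of $x_i$ under the classifying map $L_* \to \pi_*(\MO)$ for the formal group law $F_{\MO}$ of $\MO$, and the task becomes identifying which Lazard generators die under this classifying map.

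To finish, I would analyze the $[2]$-series. Since $\pi_*(\MO)$ is an $\F_2$-algebra, $[2]_{F_{\MO}}(x) = 0$. Choose the $x_i$ to be Araki-style (equivalently $2$-typical Hazewinkel) generators so that, modulo decomposables in $L_*/2$, the coefficient of $x^{2^k}$ in the universal $[2]$-series is a unit multiple of $x_{2^k - 1}$ while all other coefficients are decomposable. Then $[2]_{F_{\MO}} = 0$ forces $x_{2^k - 1} \mapsto 0$ for all $k$ and imposes only decomposable relations on the remaining generators. A final degree-by-degree comparison with Thom's polynomial generators $y_i$ shows the surviving $x_i$ with $i \neq 2^k - 1$ must map to indecomposables, establishing the converse. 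The hard part will be justifying the compatibility of the bar classes $\bar{r}_i$ from [HHR, 5.4] with the Araki generator choice; this amounts to tracking coefficients in the $[2]$-series of the Real formal group law on $\MU_{\R}$, and the key input is that the Lazard indecomposable in degree $2^k - 1$ is detected, modulo decomposables, by the $x^{2^k}$ coefficient of the $[2]$-series.
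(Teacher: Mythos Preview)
The paper does not prove this statement; it is quoted verbatim from \cite[Proposition~5.50]{HHR} and used as a black box. So there is no ``paper's own proof'' to compare against, only the HHR argument you are implicitly trying to reconstruct.

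Your outline has the right ingredients but contains a genuine gap in the step marked ``Hence''. You argue that $\bar{r}_i$ restricts along the trivial subgroup to a Lazard generator $x_i \in \pi_{2i}(\MU)$, and then conclude that $\Phi^{C_2}(\bar{r}_i)$ equals the image of $x_i$ under the classifying map $L_* \to \pi_*(\MO)$. These are different operations: restriction to the trivial subgroup is $\pi_{*(1+\sigma)}^{C_2}(\MU) \to \pi_{2*}(\MU)$, whereas the map in the proposition is the geometric fixed points map $\pi_{*(1+\sigma)}^{C_2}(\MU) \to \pi_*(\Phi^{C_2}\MU)$. There is no a~priori reason these fit into a commuting triangle with the $\MO$ classifying map, and in fact they do not naively factor through each other. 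What is true is that the \emph{Real orientation} of $\MU_\R$ passes under $\Phi^{C_2}$ to the canonical $\MO$-orientation, so the power series whose coefficients are the $\bar{r}_i$ (this is how HHR defines them in \S5.4) is sent to the corresponding $\MO$ power series. Computing that image is precisely the content of \cite[Proposition~5.50]{HHR} and its companion \cite[Lemma~5.21]{HHR}, and is not bypassed by the $[2]$-series heuristic.

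Separately, once one knows the composite really is the $\MO$ formal-group-law classifying map, your endgame can be made cleaner and choice-free: the map $L_*/2 \to \pi_*(\MO)$ is surjective, and comparing indecomposables degree by degree (one $\F_2$ on the source side, one $\F_2$ on the target side except in degrees $2^k-1$ where the target vanishes) shows immediately that \emph{any} polynomial generator of $L_*$ in degree $i$ maps to an indecomposable iff $i \neq 2^k-1$. So the dependence on an Araki-style choice is unnecessary; but the identification of the composite with the classifying map is the real work, and that is what HHR does.
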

\begin{proposition}[\protect{\cite[Proposition 7.6]{HHR}}]
\label{lem:HHR2}
The canonical map
\[
\pi_*(\MO) \to \pi_*(\Phi^{C_2}(\HZ_{(2)}))
\]
is zero for $* > 0$.
\end{proposition}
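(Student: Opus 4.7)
The plan is to exploit the ring structure on both sides together with an explicit understanding of $\pi_*\Phi^{C_2}H\underline{\mathbb{Z}}_{(2)}$. First, I would recognize the canonical map as arising by geometric fixed points from the equivariant Thom class $\MU_{\mathbb{R}} \to H\underline{\mathbb{Z}}_{(2)}$ in $\SH^{C_2}$: using $\Phi^{C_2}\MU_{\mathbb{R}} = \MO$ (cf.~\cite[5.1]{HHR}) and the monoidality of $\Phi^{C_2}$, this produces a ring map $\MO \to \Phi^{C_2} H\underline{\mathbb{Z}}_{(2)}$, and the multiplicativity will be the key structural input.

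Next, I would compute $\pi_*\Phi^{C_2}H\underline{\mathbb{Z}}_{(2)}$ via the isotropy separation square (or equivalently the Tate square), identifying it with a polynomial ring on a single generator of even positive degree (coming from the Euler/fundamental class of the sign representation). In particular, it is concentrated in nonnegative even degrees, which already forces every $x_i \in \MO_*$ of odd degree (where $\MO_* = \mathbb{F}_2[x_i : i \neq 2^k-1]$) to hit zero.

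Once that is in place, by multiplicativity the problem reduces to showing that each polynomial generator $x_i$ with $i$ even and $i \neq 2^k-1$ lands in $0$. Here I would choose the $x_i$ as the images, under the geometric fixed points map $\pi^{C_2}_{i(1+\sigma)}\MU_{\mathbb{R}} \to \pi_i \MO$, of the equivariant Hill--Hopkins--Ravenel generators $\bar r_i$; by \Cref{lem:HHR1} these are exactly the available $i$. The underlying non-equivariant map $\MU \to H\mathbb{Z}_{(2)}$ is the complex orientation realizing the additive formal group law, so the polynomial generators of $\pi_*\MU$ (the FGL coefficients) map to zero in positive degrees. I would then lift this vanishing to the equivariant setting: the Real-oriented Thom class sends $\bar r_i$ into $\pi^{C_2}_{i(1+\sigma)}H\underline{\mathbb{Z}}_{(2)}$, and this group is small enough (vanishing or 2-torsion with no room) to force the image to be zero. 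Geometric fixed points then preserves this, yielding $x_i \mapsto 0$.

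The main obstacle is the last step: the passage from non-equivariant to equivariant vanishing. Non-equivariantly it is immediate from the FGL, but equivariantly one must control the $RO(C_2)$-graded homotopy of $H\underline{\mathbb{Z}}_{(2)}$ well enough to rule out nonzero lifts of the classes $\bar r_i$. This is the technical heart of \cite[Proposition 7.6]{HHR} and relies on the detailed $RO(C_2)$-graded computations carried out there.
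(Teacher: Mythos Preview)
The paper does not prove this proposition; it is quoted from \cite[Proposition 7.6]{HHR} with no argument given, so there is no in-paper proof to compare your sketch against.

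That said, your outline is correct, and the step you flag as the main obstacle is in fact straightforward for $G=C_2$. The group $\pi^{C_2}_{i(1+\sigma)}\HZ_{(2)}$ simply vanishes for every $i>0$: this is immediate from the standard computation of $\pi^{C_2}_\star \HZ$, since the degree $i+i\sigma$ with $i>0$ lies outside both the positive cone $\Z[a_\sigma,u_{2\sigma}]/(2a_\sigma)$ (where $|a_\sigma|=-\sigma$, $|u_{2\sigma}|=2-2\sigma$) and the negative cone. Hence $\bar r_i\mapsto 0$ in $\pi^{C_2}_{i(1+\sigma)}\HZ_{(2)}$ automatically, and the non-equivariant formal-group-law input you invoke is unnecessary. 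Naturality of $\Phi^{C_2}$ together with \Cref{lem:HHR1} then gives $x_i\mapsto 0$ for every polynomial generator of $\pi_*\MO$, and multiplicativity finishes the argument. Your identification of $\pi_*\Phi^{C_2}\HZ_{(2)}$ as a polynomial $\Z/2$-algebra on one degree-$2$ generator is also correct (invert $a_\sigma$ in the positive cone and restrict to integer grading), though it is not needed here. The genuine technical work in \cite[Proposition 7.6]{HHR} arises only for the larger groups $C_{2^n}$ with $n>1$.
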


The next lemma limits the possible targets of the differentials.
\begin{lemma}
\label{lem:rhotors}
The canonical map
\[
\pi_{*,*}(\MGL) \to \pi_{*,*}(\MGL[\rho^{-1}]) \cong \pi_*(\MO)
\]
sends
$x_i \in \pi_{2i,i}(\MGL)$ to $0$ if and only if $i = 2^k - 1$, for some $k$.
In particular, only the elements $x_{2^k - 1}$ are $\rho$-torsion.
\end{lemma}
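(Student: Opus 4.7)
The plan is to factor the canonical map $\pi_{*,*}(\MGL) \to \pi_{*,*}(\MGL[\rho^{-1}])$ through the $C_2$-equivariant stable homotopy category and then invoke the Hill--Hopkins--Ravenel input recorded in \Cref{lem:HHR1}.

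First I would establish the identification $\pi_{*,*}(\MGL[\rho^{-1}]) \cong \pi_*(\MO)$ appearing in the statement. By \Cref{thm:tom} real realization induces an equivalence $\SH(\R)[\rho^{-1}] \simeq \SH$, so $\pi_{2i,i}(\MGL[\rho^{-1}]) \cong \pi_i(R_\R(\MGL))$. Applying \Cref{lem:fixed-points-realization} together with the identifications $R_\C^{C_2}(\MGL) = \MU$ and $\Phi^{C_2}(\MU) = \MO$ recorded in \Cref{sec:intro}, I conclude $R_\R(\MGL) = \MO$.

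Under this identification the canonical map in bidegree $(2i,i)$ is exhibited as the composition
\[
\pi_{2i,i}(\MGL) \xrightarrow{R_\C^{C_2}} \pi_{i(1+\sigma)}^{C_2}(\MU) \xrightarrow{\Phi^{C_2}} \pi_i(\MO).
\]
By the choice of algebra generators fixed immediately before \Cref{lem:HHR1}, the element $x_i \in \pi_{2i,i}(\MGL)$ is sent by $R_\C^{C_2}$ to $\bar{r}_i \in \pi_{i(1+\sigma)}^{C_2}(\MU)$. \Cref{lem:HHR1} then says precisely that $\bar{r}_i$ is killed by $\Phi^{C_2}$ if and only if $i = 2^k - 1$ for some $k$, which is the main assertion.

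For the final sentence, I would use that inverting $\rho$ is a smashing localization, so an element of $\pi_{*,*}(\MGL)$ is $\rho$-torsion exactly when its image in $\pi_{*,*}(\MGL[\rho^{-1}])$ vanishes; combined with the previous paragraph this shows that among the generators $x_i$ only the $x_{2^k-1}$ are $\rho$-torsion. There is no serious obstacle once the cited results are in hand; the only care required is to make sure the convention identifying $x_i \in \pi_{2i,i}(\MGL)$ with $\bar{r}_i$ under $R_\C^{C_2}$ is applied correctly, which is exactly how the $x_i$ have been chosen in the paragraph preceding \Cref{lem:HHR1}.
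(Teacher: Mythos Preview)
Your proposal is correct and follows essentially the same approach as the paper: both arguments factor the localization map through $C_2$-equivariant realization via the commutative square of \Cref{sec:intro}, invoke \Cref{thm:tom} for the identification $\pi_{*,*}(\MGL[\rho^{-1}])\cong\pi_*(\MO)$, and then apply \Cref{lem:HHR1} to the chosen generators $x_i\mapsto\bar r_i$. The paper phrases this as commutativity of a square with bottom row $\pi^{C_2}_{*,*}(\MU)\to\pi^{C_2}_{*,*}(\MU)[a^{-1}]$, while you unwind the same composite as $\Phi^{C_2}\circ R_\C^{C_2}$; these are the same argument.
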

\begin{proof}
Consider the diagram
\[
\begin{tikzcd}
\pi_{*,*}(\MGL) \ar[r]\ar[d] & \pi_{*,*}(\MGL[\rho^{-1}]) \ar[d, "\cong"] \\
\pi_{*,*}^{C_2}(\MU) \ar[r] & \pi_{*,*}^{C_2}(\MU)[a^{-1}] \cong \pi_{*}(\MO)
\end{tikzcd}
\]
where $a$ is the Euler class.
The left vertical map sends $x_i$ to $x_i$.
The bottom map sends $x_i$ to $0$ if and only if $i = 2^k - 1$ by \Cref{lem:HHR1}.
The right vertical map is an isomorphism by \Cref{thm:tom}.
Hence the assertion is proven
(unoriented cobordism has homotopy groups $\pi_*(\MO) = \Z/2[h_j, j \neq 2^{k} - 1]$, where the $h_j$'s are classes in degree $j$ defined as the coefficients of a certain power series in the Euler class $a$ given in \cite[Lemma 5.21]{HHR}).
\end{proof}

\begin{theorem}
\label{thm:diffs}
The $d^i$-differential in the slice spectral sequence of $\MGL/2^n$ on
$u^{2^{k-1}}$
is trivial for $i < r = 2^k - 1$ and
$
d^r(u^{2^{k-1}}) = \rho^{2^{k+1} - 1} x_{2^k - 1}.
$
\end{theorem}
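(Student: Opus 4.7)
The plan is to combine multiplicativity of the slice spectral sequence for $\MGL/2^n$ (which is a ring spectrum for $n > 1$) with a transfer of differentials from the $C_2$-equivariant slice spectral sequence of $\MU$ via the realization $R_\C^{C_2}$ recalled in \Cref{sec:intro}. I would first use the Leibniz rule to reduce the problem to differentials on the algebra generators $\rho, \tau, u, x_i$. The classes $\rho$ and $\tau$ lie in the zeroth slice $\s_0(\MGL/2^n) = \MZ/2^n$, so they are permanent cycles; each $x_i$ is a permanent cycle because it lifts to $\pi_{2i,i}(\MGL) \cong L_i$ via the Hopkins--Morel isomorphism. This leaves only the powers of $u$, and I would then induct on $k$, showing at each stage that $u^{2^{k-1}}$ is a permanent cycle up to page $r = 2^k - 1$ and computing its $d^r$-differential.

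For the base case $k = 1$, \Cref{lem:HR} identifies $E^1_{-1,1,-2} \cong H^{3,3}(\R;\Z/2^n) \otimes L_1$ with $\Z/2 \cdot \rho^3 x_1$, so $d^1(u) \in \{0, \rho^3 x_1\}$. To rule out $0$, I would apply $R_\C^{C_2}$, under which $\MGL \mapsto \MU$, $\MZ \mapsto \HZ$, $\rho \mapsto a_\sigma$, $u \mapsto u_{2\sigma}$, and $x_1 \mapsto \bar r_1$. The Slice Differentials Theorem of Hill--Hopkins--Ravenel gives $d^1(u_{2\sigma}) = a_\sigma^3 \bar r_1 \neq 0$, forcing $d^1(u) = \rho^3 x_1$.

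For the inductive step I would write $u^{2^{k-1}} = (u^{2^{k-2}})^2$. The Leibniz rule together with the inductive hypothesis immediately kills $d^i(u^{2^{k-1}})$ for $i < 2^{k-1} - 1$; at $i = 2^{k-1} - 1$, Leibniz yields $2\,\rho^{2^k - 1}\, x_{2^{k-1}-1}\, u^{2^{k-2}} = 0$ since $2\rho = 0$ by \Cref{lem:HR}. For the remaining range $2^{k-1} - 1 < i < 2^k - 1$, I would transfer vanishing from the $C_2$-equivariant side: any nonzero motivic differential would realize to a nonzero equivariant differential on $u_{2\sigma}^{2^{k-1}}$ strictly before page $2^k - 1$, contradicting the HHR theorem. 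Finally, the same comparison on page $2^k - 1$ identifies the value of $d^{2^k - 1}(u^{2^{k-1}})$ as $\rho^{2^{k+1} - 1} x_{2^k - 1}$, with \Cref{lem:rhotors} pinning down the $x_{2^k - 1}$-coefficient as nonzero (any other summand of the target would be a product of $x_j$'s with $j \neq 2^l - 1$, hence not $\rho$-torsion, and could not account for the HHR formula).

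The hardest part will be verifying that the motivic slice filtration of $\MGL$ maps compatibly to the $C_2$-equivariant slice filtration of $\MU$ under $R_\C^{C_2}$, so that a motivic $d^r$-differential is carried to an equivariant $d^r$-differential on the nose. This compatibility uses the matching slice descriptions $\s_q(\MGL) = \Sigma^{2q,q}\MZ \otimes L_q$ and $\s_q^{C_2}(\MU) = \Sigma^{q(1+\sigma)}\HZ \otimes L_q$, the identifications $R_\C^{C_2}(\MGL) = \MU$ and $R_\C^{C_2}(\MZ) = \HZ$ recalled in \Cref{sec:intro}, and Heard's comparison of the two filtrations.
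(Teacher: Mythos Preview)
Your approach is valid but takes a genuinely different route from the paper's proof. You propose a direct comparison of slice spectral sequences via $R_\C^{C_2}$, invoking the HHR Slice Differentials Theorem and Heard's comparison of the motivic and $C_2$-equivariant slice filtrations. The paper instead adapts HHR's \emph{proof technique} to the motivic setting: after reducing to $\MGL/2$ (where $u$ becomes $\tau^2$), it uses \Cref{lem:rhotors} and the induction hypothesis to argue that the only possible nonzero differential on $\tau^{2^k}$ is $d^{2^k-1}(\tau^{2^k}) = \rho^{2^{k+1}-1}x_{2^k-1}$, and then shows this differential is nonzero by a contradiction argument through real realization (i.e.\ $\rho$-inversion), using only \Cref{lem:HHR1} and \Cref{lem:HHR2}. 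In particular, the paper never compares the two slice filtrations directly and does not need Heard's result.

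What each approach buys: your method is conceptually more direct---once the comparison of filtrations is in place, the differentials are simply read off from HHR---but it imports a substantial external input (Heard's theorem) and you should be explicit that ``a nonzero motivic differential realizes to a nonzero equivariant differential'' requires injectivity of $R_\C^{C_2}$ on the relevant $E^i$-target groups, not just on $E^1$ (this holds because the targets lie in the positive-cone region where motivic and equivariant cohomology agree, but it deserves a sentence). The paper's approach is more self-contained within the results already stated: it uses only the $\rho$-localization equivalence (\Cref{thm:tom}), the identifications $\MGL[\rho^{-1}]\simeq\MO$ and $\MZ[\rho^{-1}]\simeq\Phi^{C_2}(\HZ)$, and the two HHR propositions. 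One minor point: your invocation of \Cref{lem:rhotors} at the end is superfluous in your framework---if the realization is injective on $L_{2^k-1}$, the HHR formula alone pins down the value without any $\rho$-torsion analysis.
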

\begin{proof}
We use the proof in \cite[Theorem 9.9]{HHR} adapted to the slice spectral sequence of $\MGL/2$.

The projection $\MGL/2^n \to \MGL/2$ induces a surjection on the slices $H^{0,q}(\R;\Z/2^n) \to H^{0,q}(\R;\Z/2)$ when is $q$ even.
Let $k \geq 1$ and do induction on $k$.
The differentials of $u^{2^{k-1}}$ in $\MGL/2^n$ determines the differentials on $\tau^{2^k}$ in $\MGL/2$.
\Cref{lem:rhotors} and induction implies that the only possible nonzero differential on $\tau^{2^k}$ in $\MGL/2$ is
\[
d^r(\tau^{2^k}) = \rho^{2^{k+1}-1} x_{2^k - 1}.
\]

We must show that the $d^r$-differential on $\tau^{2^k}$ is nonzero, i.e., $\tau^{2^k}$ is not a permanent cycle.
Assume $\tau^{2^k}$ is a permanent cycle.
That is, it represents some element in $\pi_{0,-2^k}(\MGL/2)$.
Consider the commutative diagram
\[
\begin{tikzcd}
\MGL/2 \ar[r]\ar[d] & \MZ/2 \ar[d] \\
\MGL/2[\rho^{-1}] \ar[r] & \MZ/2[\rho^{-1}],
\end{tikzcd}
\]
and the induced map of slice spectral sequences.
On $E^1$-pages, the $\tau^{2^k}$ in $E^1(\MGL/2)$ maps to $\tau^{2^k}$ in $E^1(\MZ/2)$.
The slice spectral sequence of $\MZ/2$ collapses on the $E^1$-page.
If $\tau^{2^k}$ survives the slice spectral sequence of $\MGL/2$, it represents a nonzero element $[\tau^{2^k}]$ in $\pi_{*,*}(\MGL/2)$ which maps to $\tau^{2^k}$ in $\pi_{*,*}(\MZ/2)$.
Note that $\tau^{2^k}$ is not $\rho$-torsion in $\pi_{*,*}(\MZ/2)$,
hence $[\tau^{2^k}]$ is not $\rho$-torsion, and thus survives $\rho$-localization.
That is,
$[\tau^{2^k}] \in \pi_{0,-2^k}(\MGL/2[\rho^{-1}])$ is nonzero and maps to
$\tau^{2^k} \in \pi_{0,-2^k}(\MZ/2[\rho^{-1}])$.
Identifying $\MGL/2[\rho^{-1}]$ with $\Phi^{C_2}(\RC(\MGL/2)) = \MO/2$,
and $\MZ[\rho^{-1}]$ with $\Phi^{C_2}(\RC(\MZ/2)) = \Phi^{C_2}(\HZ/2)$ we conclude that
the map $\pi_*(\MO) \to \pi_*(\Phi^{C_2}(\HZ_{(2)}))$ is nonzero for $* > 0$,
contradicting \Cref{lem:HHR2}.
Indeed, $\pi_*(\MO/2)$ is an extension of
$\Tor^\Z_1(\pi_*(\MO), \Z/2)$
by $\pi_*(\MO)/2$, and similarly for $\pi_*(\Phi^{C_2}(\HZ/2))$.
Hence, in the induced map of extensions we get that
$\pi_*(\MO) \to \pi_*(\Phi^{C_2}(\HZ_{(2)}))$ is nonzero for $* > 0$, since $k \geq 1$.
\end{proof}
The differentials in \Cref{thm:diffs} together with the Leibniz rule determine all possible differentials.
We proceed to describe the $E^\infty$-page of the slice spectral sequence of $\MGL/2^n$ over $\R$.
Consider the subalgebra
\[
B_{*,*} = \bigoplus_{l,p\geq 0}E^1_{2l-p,l,l-p}
\]
of the $E^1$-page
(i.e., $B_{*,*}$ is everything on the $E^1$-page not a multiple of $\tau$ or $u$, it is suggestive to write $B_{*,*} = H^{*=*}\tensor L_*$).
Above we observed
that $d^r(B_{*,*}) = 0$ for degree reasons for all $r$.
Hence, each $E^r$-page is a $B_{*,*}$-module.

The $E^1$-page is the $B_{*,*}$-module
\begin{equation}
\label{eq:E1}
\bigoplus_{i\geq0} u^i B_{*,*}
\oplus
\bigoplus_{i\geq0} \tau u^i B_{*,*}.
\end{equation}
\begin{theorem}
\label{thm:E8R}
Let $I_l$ be the ideal of $B_{*,*}$ generated by
\[
(\rho^3 x_1, \dots, \rho^{2^{l+1}-1}x_{2^l-1}),
\]
and let $I_0 = (0)$.
Let $I$ be $I_\infty$.
Let $J_l$ be the ideal $(2, x_1, \dots, x_{2^l-1})$ in $B_{*,*}$,
and let $J_0 = (0)$.
Let $A(l)$ denote the quotient $J_l/I_l$.
Then the $E^\infty$-page is
\begin{align}
E^\infty(\R;\Z/2^n)
\cong &
B_{*,*}/I \oplus \tau B_{*,*}/I \nonumber\\
&\oplus \bigoplus_{i \geq 1} u^i A(\nu_{2}(i)) L_*
\oplus \bigoplus_{i \geq 1} \tau u^i A(\nu_{2}(i)) L_*. \label{eq:Einfty}
\end{align}
Here $L_*$ is inserted into $E^\infty(\R;\Z/2^n)$ via the canonical embedding
induced by $L_k \cong E^1_{2k,k,k}$.
\end{theorem}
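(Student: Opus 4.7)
The plan is to exploit the multiplicative structure via the $E^1$-decomposition $\bigoplus_{i \geq 0} u^i B_{*,*} \oplus \bigoplus_{i \geq 0} \tau u^i B_{*,*}$ from \eqref{eq:E1}. First I would verify that $B_{*,*}$ and $\tau$ consist of permanent cycles: any potential $d^r$-target of an element in $B_{*,*}$ (respectively $\tau$) sits in a tridegree whose corresponding bidegree of $H^{*,*}(\R;\Z/2^n)$ vanishes by a direct check against \Cref{lem:HR}. Consequently all differentials are $B_{*,*}$-linear and determined by their action on the powers $u^i$.

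Applying \Cref{thm:diffs} together with the Leibniz rule to $u^{j \cdot 2^{k-1}} = (u^{2^{k-1}})^j$ gives
\[
d^{2^k-1}(u^{j \cdot 2^{k-1}}) = j \cdot u^{(j-1) \cdot 2^{k-1}} \cdot \rho^{2^{k+1}-1}x_{2^k-1}.
\]
Since $2\rho = 0$, this vanishes when $j$ is even and is a unit multiple of the right hand side when $j$ is odd. By induction on $k$, the surviving powers of $u$ on $E^{2^k}$ are $\{u^i : 2^k \mid i\}$, each $u^i$ with $i \geq 1$ is killed on page $2^{\nu_2(i)+1}-1$, and $d^r = 0$ for every $r$ not of the form $2^k-1$. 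Fixing $i \geq 1$ with $l := \nu_2(i)$, the incoming differentials into $u^i B_{*,*}$ (coming from $u^{i+2^{k-1}} B_{*,*}$ for $k=1,\dots,l$) contribute exactly the image $u^i \cdot I_l$, while the outgoing differential on $u^i b$ equals a unit multiple of $b \cdot u^{i-2^l}\rho^{2^{l+2}-1}x_{2^{l+1}-1}$, which vanishes in $E^{2^{l+1}-1}$ precisely when $b \cdot \rho^{2^{l+2}-1}x_{2^{l+1}-1} \in I_l$.

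The crux is therefore the identification of the colon ideal
\[
\{\, b \in B_{*,*} \mid b \cdot \rho^{2^{l+2}-1}x_{2^{l+1}-1} \in I_l \,\} = J_l.
\]
The inclusion $J_l \subseteq \{\dots\}$ is immediate from $2\rho = 0$ and, for each $m \leq l$, the factorization $x_{2^m-1}\rho^{2^{l+2}-1}x_{2^{l+1}-1} = \rho^{2^{l+2}-2^{m+1}}x_{2^{l+1}-1} \cdot \rho^{2^{m+1}-1}x_{2^m-1} \in I_l$. For the reverse inclusion I would observe that $B_{*,*}/J_l$ is isomorphic to the polynomial ring $\Z/2[\rho,\, x_j : j \notin \{2^m-1 : m\leq l\}]$, hence an integral domain in which the image of $\rho^{2^{l+2}-1}x_{2^{l+1}-1}$ is nonzero; since $I_l \subseteq J_l$, the hypothesis forces $\bar b = 0$ modulo $J_l$. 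With this in hand, each $u^i B_{*,*}$ summand with $\nu_2(i) = l \geq 1$ contributes $u^i J_l/u^i I_l = u^i A(l)$, the $u^0 B_{*,*}$ summand receives images from every $d^{2^k-1}$ and contributes $B_{*,*}/I$, and the identical analysis (using permanence of $\tau$) handles the $\tau u^i$ summands, assembling to \eqref{eq:Einfty}. I expect the main obstacle to be the colon-ideal computation, in particular the careful identification of $B_{*,*}/J_l$ as a genuine polynomial ring so that the integral-domain argument applies cleanly.
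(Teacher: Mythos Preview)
Your approach is essentially the paper's: both proceed by induction on the page, treat each $u^iB_{*,*}$-summand separately using $\tau$- and $B_{*,*}$-linearity, and identify the kernel of the outgoing $d^{2^{l+1}-1}$ on $u^iB_{*,*}/I_l$ as the colon ideal $(I_l:\rho^{2^{l+2}-1}x_{2^{l+1}-1})$. Your integral-domain argument for this colon ideal is more explicit than the paper's one-line ``that is, $\rho^j y\in J_{l_2(r)-1}$,'' and is the right way to justify that step.

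There is one genuine gap. After showing that $u^iB_{*,*}$ becomes $u^iA(l)$ on page $2^{l+1}$, you immediately declare this to be the $E^\infty$-contribution, without checking that $u^iA(l)$ neither supports nor receives differentials on pages $2^k-1$ with $k>l+1$. Pure degree reasons do not suffice when $i\ge 2^{k-1}$: for instance $u^6x_1\in u^6A(1)$ has a $d^7$-target in a nonvanishing bidegree of $H^{*,*}(\R;\Z/2^n)$. The missing observation is exactly your own factorization: for any $b\in J_l$ and any $k>l$, one has $b\,\rho^{2^{k+1}-1}x_{2^k-1}\in I_l$, so the image of $d^{2^k-1}$ on $u^iA(l)$ lands in $I_l$ and vanishes in the target $A(l)$-summand. (The paper writes ``for degree reasons'' here, which is equally loose.)

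A secondary wrinkle: your domain argument needs $B_{*,*}/J_l$ to be a polynomial ring over $\Z/2$, but with the stated convention $J_0=(0)$ the quotient $B_{*,*}/J_0=\Z/2^n[\rho,x_j]/(2\rho)$ has the zero-divisors $2\cdot\rho=0$, so the argument breaks at $l=0$. In fact the colon ideal for $l=0$ is $(2)$, not $(0)$; this matches the paper's later convention $x_0=2$ (so that $z_{0,l}=2u^l$ are genuine generators) and indicates the issue lies in the statement's convention for $J_0$ rather than in your method.
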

\begin{proof}
Let $l_2(r)$ be the length of the binary expansion of $r$, i.e., the minimal integer $l_2(r)$ such that $r < 2^{l_2(r)}$.
By induction on $r$ we get
\begin{align}
E^r(\R;\Z/2^n) =
&\bigoplus_{\nu_2(i)\geq l_2(r)-1} u^i B_{*,*}/I_{l_2(r)-1}
\oplus 
\bigoplus_{\nu_2(i)\geq l_2(r)-1} \tau u^i B_{*,*}/I_{l_2(r)-1} \nonumber \\
&\oplus \bigoplus_{l_2(r)-1 > i \geq 1} u^i A(\nu_{2}(i)) L_*
\oplus \bigoplus_{l_2(r)-1 > i \geq 1} \tau u^i A(\nu_{2}(i)) L_*.
\label{eq:Er}
\end{align}
Indeed, the base case is \eqref{eq:E1}.
Assume inductively \eqref{eq:Er} is true for $r$.
If $l_2(r) = l_2(r+1)$ then $E^r = E^{r+1}$,
since there are only differentials when 
$l_2(r) < l_2(r+1)$.
So we may assume $r=2^k-1$.
Then we have a $d^{r}$-differential
\[
d^{r}(u^{2^{k-1}})
=
\rho^{2^{k+1} -1}x_{2^{k}-1},
\]
and more generally $d^{r}$-differentials on the $u^{i}$ with $2$-adic valuation $\nu_2(i) = k-1$.

We analyze the differential on each summand in \eqref{eq:Er}:
Note that the summands do not interact since the differentials are $u^{2^k}$- and $\tau$-linear.
The differential on the last two summands are zero for degree reasons, cf.~\Cref{fig:E1-page}.

The contribution to the $E^{r+1}$-page of the differential on the summand
\[
\bigoplus_{\nu_2(i) = l_2(r)-1} u^i B_{*,*}/I_{l_2(r)-1}
\]
is
\[
\bigoplus_{\nu_2(i)=l_2(r)-1} u^i A(\nu_{2}(i)) L_*.
\]
Indeed, the differential of an element
$a = u^i \rho^j y \in u^i B_{*,*}/I_{l_2(r)-1}$ is
\[
d^{r}(a)
=
\rho^{2^{k+1} -1 + j}x_{2^{k}-1} u^{i - 2^{k-1}} y.
\]
This is zero in the image if and only if
$\rho^{2^{k+1} -1 + j}y \in I_{l_2(r)-1}$.
That is, $\rho^j y$ is an element of $J_{l_2(r)-1}$
(note that $\rho^{2^{k+1} -1} y \equiv 0 \bmod I_{l_2(r)-1}$ for $y \in J_{l_2(r)-1}$).

Similarly the summand
\[
\bigoplus_{\nu_2(i) = l_2(r)-1} \tau u^i B_{*,*}/I_{l_2(r)-1}
\]
contributes
\[
\bigoplus_{\nu_2(i)=l_2(r)-1} \tau u^i A(\nu_{2}(i)) L_*.
\]
The Leibniz rule implies that summands with $\nu_2(i) > l_2(r)-1$ have zero $d^r$-differential.

Passing to the limit over $r$ in \eqref{eq:Er} we obtain \eqref{eq:Einfty}.
\end{proof}

\begin{remark}
Determining possible extensions and the multiplicative structure in $\MGL_{*,*}(\R; \Z/2^n)$ seems to be fairly hard.
The simplest extensions $2[\tau] = [\rho^2 x_1]$ and $2^{n-1}[2u] \neq 0$ can be determined by comparison with $K$-theory of the real numbers.
There are extension problems for $x_1 u, x_3 u, \dots, 2u, 2u^2, \dots$ and their $\tau$-multiples.
Unfortunately, $K$-theory does not detect the ones not a multiple of $x_1$.
For motivic Morava $K$-theory of the real numbers we are generally unable to determine the extensions, see \Cref{rmk:Kn-exts}.
\end{remark}

\subsection*{Two-complete algebraic cobordism of $\R$}%
\label{sec:R2}
Consider the filtration of $\MGL_{*,*}(\R;\Z/2^n)$ associated to $E^\infty(\R;\Z/2^n)$.
Since all the groups in the filtration are finite $\lim_n$ is exact and we may pass to the limit over $n$ to obtain a filtration of $\MGL_{*,*}(\R;\Z_2)$,
the motivic homotopy groups of $\holim_n \MGL/2^n$.
We write $E^\infty(\R;\Z_2)$ for the associated graded of this filtration.
This associated graded is simpler than $E^\infty(\R;\Z/2^n)$ since all the $\tau$-multiples disappear.
\begin{lemma}
The associated graded of the filtration of $\MGL_\star(\R;\Z_2)$ described above is
\begin{align}
E^\infty(\R;\Z_2)
\cong &
\lim_n \left( B_{*,*}/I \oplus \bigoplus_{i \geq 1} u^i A(\nu_{2}(i)) L_* \right) \nonumber \\
\cong &
\frac{\Z_2[\rho, z_{j,l}, x_h \vert l, j \geq 0, 2^{j}-1 \neq h \geq 1]}
{(2\rho, \rho^{2^{j+1} - 1}z_{j,l},
z_{k,l}z_{a,b} = z_{k,l+b2^{a-k}}z_{a,0}, \text{ for } a \geq k)}.
\end{align}
Here we write $z_{j,l} = u^{l2^{j}}x_{2^{j}-1}$,
which explains the relations (with the convention $x_0 = 2$).
\end{lemma}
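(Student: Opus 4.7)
The plan is to apply the inverse limit to the decomposition of $E^\infty(\R;\Z/2^n)$ provided by \Cref{thm:E8R} and translate the result into the claimed presentation. By the observation preceding the lemma, $\lim_n$ is exact on each tridegree of the filtration and commutes with the direct sum decomposition. The first isomorphism then follows once we check that the two $\tau$-multiples in the formula of \Cref{thm:E8R} vanish in the limit. Indeed, the end of the proof of \Cref{lem:HR} records that the transition map $H^{*,*}(\R;\Z/2^{n+1})\to H^{*,*}(\R;\Z/2^n)$ is multiplication by $2$ in $\tau$-degree, which annihilates the $2$-torsion class $\tau$; multiplying by $u^i$, by $\rho^a$ and by Lazard classes shows that every $\tau$-multiple in $E^\infty(\R;\Z/2^n)$ sits in an inverse system with zero transition maps, hence dies.

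To establish the second isomorphism I would first identify the surviving generators. From $B_{*,*}/I$ the limit produces $\Z_2[\rho,x_h]/(2\rho,\rho^{2^{k+1}-1}x_{2^k-1}\,\colon\,k\geq 1)$; here $\rho$ is $2$-torsion and the Lazard classes $x_h$ have $\Z_2$-coefficients. From the summand $u^i A(\nu_2(i))L_*$, writing $i = l\cdot 2^j$ with $l$ odd and $0\leq k\leq j$, every generator not in $B_{*,*}/I$ is of the form $u^i\cdot 2 = z_{0,i}$ (with $x_0 = 2$) or $u^i x_{2^k-1} = z_{k,l\cdot 2^{j-k}}$, multiplied by a polynomial in the remaining Lazard classes $x_h$ with $h\neq 2^j-1$. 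By \Cref{lem:rhotors} these latter $x_h$ are not $\rho$-torsion, so they appear freely. Hence $\rho$, the $z_{j,l}$, and the $x_h$ with $h\neq 2^j-1$ generate the associated graded.

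The relations are then straightforward to verify: $2\rho = 0$ is the cohomological relation in $H^{*,*}(\R;\Z_2)$ of \Cref{lem:HR}; the relation $\rho^{2^{j+1}-1}z_{j,l}=0$ comes from the defining relation of $I$ for $j\geq 1$ and from $2\rho=0$ for $j=0$; and the product formula $z_{k,l}z_{a,b} = z_{k,l+b\cdot 2^{a-k}}z_{a,0}$ for $a\geq k$ is the identity $u^{l\cdot 2^k+b\cdot 2^a}x_{2^k-1}x_{2^a-1} = u^{(l+b\cdot 2^{a-k})\cdot 2^k}x_{2^k-1}\cdot x_{2^a-1}$. The main obstacle is verifying completeness of these relations. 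For this I would compare the $\Z_2$-rank and $\Z/2$-dimension of the proposed presentation in each bidegree with the direct sum decomposition of $\lim_n E^\infty(\R;\Z/2^n)$ obtained in the first step, reducing the check to a matter of combinatorics on the indices $(j,l)$ and the remaining Lazard generators; concretely, a monomial in $\rho,u,x_h$ has a unique normal form by iteratively rewriting $u^i\cdot 2$ and $u^i x_{2^k-1}$ ($k\leq\nu_2(i)$) as single $z$-tokens, and the stated relations suffice to pass between any two such writings.
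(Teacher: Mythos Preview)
Your proposal is correct and follows essentially the same approach as the paper: both argue that the $\tau$-summands vanish in the limit because the transition maps in those bidegrees are multiplication by $2$ (as recorded in the proof of \Cref{lem:HR}), leaving only $B_{*,*}/I \oplus \bigoplus_{i\geq 1} u^i A(\nu_2(i))L_*$ to compute. The paper's proof is terser on the second isomorphism, simply noting that non-$\rho$-multiples assemble to $\Z_2$ and $\rho$-multiples to $\Z/2$, whereas you spell out the generators $z_{j,l}$ and verify each relation explicitly; this extra detail is accurate and helpful, and the combinatorial completeness check you sketch is exactly what the paper leaves implicit.
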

\begin{proof}
This reduces to the computation in \Cref{lem:HR}:
The limit is induced by the system \eqref{eq:2n-sys},
and the structure maps in the inverse system are the same as in motivic cohomology. The multiples of $\tau$ all have multiplication by $2$ as structure maps.
Since these are all $2$-torsion they vanish in the limit.
Hence we are left with computing the limit of
\begin{align}
B_{*,*}/I \oplus \bigoplus_{i \geq 1} u^i A(\nu_{2}(i)) L_*. \nonumber
\end{align}
Anything not a multiple of $\rho$ is part of a system which is $\Z_2$ in the limit.
Otherwise the structure map is the identity of $\Z/2$ and in the limit we get $\Z/2$.
\end{proof}
\begin{theorem}
\label{thm:MGLR}
There are no hidden additive or multiplicative extensions on the $E^\infty$-page.
That is, as an algebra
\[
\bigoplus_{p,w} \bigoplus_q E^\infty_{p,q,w} \cong \MGL_{*,*}(\R;\Z_2).
\]
Hence,
\begin{equation}
\MGL_{*,*}(\R;\Z_2)
=
\frac{\Z_2[\rho, z_{j,l}, x_h \vert l, j \geq 0, 2^{j}-1 \neq h \geq 1]}
{(2\rho, \rho^{2^{j+1} - 1}z_{j,l}, z_{k,l}z_{a,b} = z_{k,l+b2^{a-k}}z_{a,0}, \text{ for } a \geq k)}.
\end{equation}
Here $z_{j,l} = [u^{l2^{j}}x_{2^{j}-1}]$ (with the convention $x_0 = 2$).
That is, $z_{j,l}$ is a particular representative of $u^{l2^{j}}x_{2^{j}-1}$
(during the proof we will see that it is a particular representative).
\end{theorem}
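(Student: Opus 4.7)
The plan is to lift the $E^\infty$-generators to elements of $\MGL_{*,*}(\R;\Z_2)$ and then verify the stated relations on the nose. Strong convergence of the slice spectral sequence, together with the finiteness of the mod $2^n$ filtration quotients, equips each $\pi_{p,w}(\MGL_2^\wedge)$ with a complete Hausdorff filtration whose associated graded is $E^\infty(\R;\Z_2)$. I take the canonical lifts of $\rho$ from the unit $\S \to \MGL$ and of each $x_h$ from the ring map $L_* \to \pi_{2*,*}(\MGL)$, and for every surviving $E^\infty$-class $u^{l2^j}x_{2^j-1}$ I fix some lift $z_{j,l}$.

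Next I pin down the non-$\rho$-torsion part by $\rho$-inversion. Combining \Cref{thm:tom} with $R_\R(\MGL) \simeq \MO$ yields $\MGL_{*,*}(\R;\Z_2)[\rho^{-1}] \cong \pi_*(\MO) \cong \Z/2[h_i : i \neq 2^k-1]$, under which the $x_h$ with $h \neq 2^k-1$ are sent to the corresponding polynomial generators, by \Cref{lem:rhotors}. Consequently the subalgebra generated by $\rho$ and these $x_h$ is isomorphic to $\Z_2[\rho, x_h]/(2\rho)$, because both its $\rho$-inverted image and its $E^\infty$-image are already free over the appropriate base rings; this settles all potential hidden extensions supported by non-$\rho$-torsion classes.

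For the remaining families $\rho^{2^{j+1}-1}z_{j,l} = 0$ and $z_{k,l}z_{a,b} = z_{k,l+b2^{a-k}}z_{a,0}$ (for $a \geq k$), both sides lie in the $\rho$-torsion part and already coincide on $E^\infty$, so any discrepancy lives in strictly higher slice filtration. My plan is to check these identities at each finite stage of the mod $2^n$ spectral sequence, where by \Cref{thm:E8R} every higher-filtration correction in the relevant bidegrees can be identified with a $\tau$-multiple. Because every $\tau$-multiple in the tower \eqref{eq:2n-sys} has structure map multiplication by $2$ and therefore vanishes in the $2$-adic inverse limit (by \Cref{lem:HR}), the identities become exact after passing to the limit, and the generators $\rho$, $x_h$, $z_{j,l}$ assemble into the presentation asserted in the theorem. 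The main obstacle I anticipate is the bookkeeping at this last step: one must verify that in each bidegree of interest the higher-slice-filtration pieces of $E^\infty(\R;\Z/2^n)$ lying between the claimed identity and the surviving $E^\infty(\R;\Z_2)$-classes consist only of $\tau$-multiples, so that nothing permanent remains after the $2$-adic limit to obstruct the relation.
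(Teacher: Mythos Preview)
Your outline locates the difficulty correctly, but the mechanism you propose in step~3 does not work. The claim that ``every higher-filtration correction in the relevant bidegrees can be identified with a $\tau$-multiple'' is not what \Cref{thm:E8R} gives you: the summands $u^i A(\nu_2(i))L_*$ of $E^\infty(\R;\Z/2^n)$ are \emph{not} $\tau$-multiples, they sit in arbitrarily high slice filtration, and they survive unchanged to $E^\infty(\R;\Z_2)$. So the inverse-limit trick removes nothing relevant, and what you label ``bookkeeping'' is in fact the entire content of the proof. The paper argues directly in $E^\infty(\R;\Z_2)$: a hypothetical higher-filtration correction to $z_{k,l}z_{a,b}=z_{k,l+b2^{a-k}}z_{a,0}$ must have the form $\rho^i[u^{n2^m}x_{2^m-1}]x_K$; matching the tridegree, imposing that the left-hand side is killed by $\rho^{2^{k+1}-1}$, and requiring the correction itself to be $\rho$-nonzero produces a short system of linear equalities and inequalities in $(k,l,a,b,m,n,i,|K|)$ which one checks has no integer solutions. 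That explicit degree count is the heart of the argument.

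There is a second, smaller gap: the relation $\rho^{2^{j+1}-1}z_{j,l}=0$ is not automatic for an arbitrary lift $z_{j,l}$. The paper fixes this by choosing the lifts inductively on the power of $u$: if $\rho^i z_{j,l}$ carries a higher-filtration error term, that term is necessarily a $\rho$-multiple of some already-chosen $z_{j',l'}x_K$ with strictly smaller $u$-power, so one subtracts it off and iterates. Your use of $\rho$-inversion and $\pi_*(\MO)$ for the subring generated by $\rho$ and the $x_h$ with $h\neq 2^k-1$ is a pleasant shortcut the paper does not take, but it handles only the non-torsion part; the torsion relations still require the degree analysis above.
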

\begin{proof}
Note that any element on the $E^1$-page is a sum of monomials $u^i\rho^jx_K$ for some numbers $i, j$ and multi-index $K$. The degrees of $u$, $\rho$ and $x_K$ are linearly independent.
It is helpful to keep in mind that a specific element of $\MGL_{*,*}(\R;\Z_2)$ is obtained by going downwards along $u$, $(2,1)$-diagonally upwards along $x_K$ and $(-1,-1)$-diagonally downwards along $\rho$. We will perform induction on the power of $u$.
  
We choose representatives $z_{j,l} = [u^{l2^{j}}x_{2^{j}-1}]$ in $\MGL_{*,*}(\R;\Z_2)$ of the algebra generators $u^{l2^{j}}x_{2^{j}-1}$ on the $E^\infty$-page inductively such that $\rho^i[u^{l2^{j}}x_{2^{j}-1}] = [\rho^i u^{l2^{j}}x_{2^{j}-1}]$, in particular $\rho^{2^{l+1} -1}z_{l,j} = 0$ (here $[\rho^i u^{l2^{j}}x_{2^{j}-1}]$ is some element representing $\rho^i u^{l2^{j}}x_{2^{j}-1}$).
Indeed, in general
\[
\rho^i[u^{l2^{j}}x_{2^{j}-1}] = [\rho^i u^{l2^{j}}x_{2^{j}-1}] + [\rho^{k} u^{l'2^{j'}}x_K] + \dots,
\]
where $K > 2^j - 1$ (so it lies in a higher filtration), which implies $k > i$ and $l2^j > l'2^{j'}$ (hence the second term is already defined by induction on $l$).
Hence, we could as well have chosen
\[
[u^{l2^{j}}x_{2^{j}-1}] - [\rho^{k-i} u^{l'2^{j'}}x_K]
\]
as a representative for $u^{l2^{j}}x_{2^{j}-1}$. Inductively we get
\[
\rho^i[u^{l2^{j}}x_{2^{j}-1}] = [\rho^i u^{l2^{j}}x_{2^{j}-1}]
\]
(this is really only a condition for $i = 2^{j+1} - 1$).

It remains to show
$z_{k,l}z_{a,b} = z_{k,l+b2^{a-k}}z_{a,0}$, $a \geq k$.
For the remainder of the proof we use the notation $z_{k,l}= [u^{l2^{k}}x_{2^{k}-1}]$.
In general,
\[
[u^{l2^{k}}x_{2^{k}-1}][u^{b2^{a}}x_{2^{a}-1}]
= [u^{(l + b2^{a - k})2^{k}}x_{2^{k}-1}][x_{2^{a}-1}] + \rho^i[u^{n2^m}x_{2^m-1}][x_K] + \dots.
\]
Without loss of generality we may assume $a \geq k$, and that there is no $m' < m$ such that $x_{2^{m'} - 1}$ divides $x_K$.
Note that if we multiply the left hand side by $\rho^{2^{k+1}-1}$ we get 0.
For the right hand side to be zero we must have $2^{k+1} -1 + i \geq 2^{m+1} - 1$.
Furthermore, for $\rho^i[u^{n2^m}x_{2^m-1}]x_K$ to be nonzero we must have $i < 2^{m+1}-1$.
Both sides are in the same bidegree of $\MGL_{*,*}(\R;\Z_2)$, and the term $\rho^i[u^{n2^m}x_{2^m-1}]x_K$ is in a higher filtration.
This gives a system of equations and inequalities on $k,l,a,b,m,n,K,i$ (by abuse of notation $K$ denotes either a multi-index or the degree of the multi-index).
This system has no solutions. We now carry this out in detail.

Comparison of the left and right hand side gives the following relations
\[
\begin{matrix}
2(2^k - 1) + 2(2^a - 1) \\
2^k - 1 + 2^a - 1 \\
(1 - 2l)2^k - 1 + (1 - 2b)2^a - 1
\end{matrix}
\begin{matrix}
= \\
\leq \\
=
\end{matrix}
\begin{matrix}
2(2^m - 1) + 2K - i \\
2^m - 1 + K \\
(1 - 2n)2^m - 1 + K - i
\end{matrix}
\]
Additionally we have
\[
a \geq k,\qquad 2^{k+1} + i \geq 2^{m+1},\qquad 2^{m+1}-1 > i > 0.
\]
Solving for $i/2$ ($i$ is a multiple of 4) this gives
\[
2l2^k + 2b2^a - 2n2^m = (2l + 2b2^{a-k})2^k - 2n2^m  = i/2
\]
which must satisfy
\[
2^k + 2^{m} > 2^k + 2l2^k + 2b2^a - 2n2^m \geq 2^m
\]
and $i/2 > 0$.
Equivalently,
\begin{equation}
\label{eq:linineq}
2^{m} > 2l2^k + 2b2^a - 2n2^m = (l + b2^{a-k})2^{k+1} - n2^{m+1} \geq \max(2^m - 2^k,1).
\end{equation}
If $k \geq m$ there are clearly no solutions.
If $k < m$ this is equivalent to
$(1 + 2n)2^{m} >  (l + b2^{a-k})2^{k+1} \geq (1 + 2m)2^m - 2^k$,
i.e., $(l + b2^{a-k})2^{k+1}$ is in the half open interval $[(1 + 2n)2^m - 2^k, (1 + 2n)2^{m})$. But the endpoint $(1 + 2n)2^{m} = (1 + 2n)2^{m-k-1}2^{k+1}$ is a multiple of $2^{k+1}$, i.e., the interval never contains $(l + b2^{a-k})2^{k+1}$.
\end{proof}
\begin{remark}
\label{rmk:elementary}
As an alternative to the above proof it is possible to copy the proof of \cite[Theorem 4.11]{Hu-Kriz:real} (or rather \cite[Theorem 7.4]{Hu-Kriz:real}) verbatim.
That is, use the motivic Adams spectral sequence to determine the multiplicative extensions (in fact, the computation of Hu and Kriz takes place in an $\Ext$-group of a subalgebra of the $C_2$-equivariant Steenrod algebra. This subalgebra is isomorphic to the mod 2 motivic Steenrod algebra).
A second alternative is to take $C_2$-equivariant complex realization, and compare directly with the $C_2$-equivariant homotopy groups of $\BPR$.
Conversely the above proof applies to the setting of Hu and Kriz, since \eqref{eq:linineq} has no solutions even if $b, l, n$ are allowed to take negative values.
This provides an alternative route to the $C_2$-equivariant homotopy groups  of $\BPR$ without use of the Adams spectral sequence.
\end{remark}

\section{Algebraic cobordism of real number fields}
\label{sec:MGLF}
In this section we compare the slice spectral sequence of $\MGL$ over a number field $F$ with the slice spectral sequence over $\R$. This determines all the differentials in the slice spectral sequence over $F$ and we compute the associated graded of $\MGL_{*,*}(F;\Z)$.

Consider the map of $E^r$-pages of the slice spectral sequence for $\MGL$ induced by the real embeddings of $F$
\[
f^r_{p,q,w} : E^r_{p,q,w}(F;\Z) \to \oplus^{r_1} E^r_{p,q,w}(\R;\Z_2).
\]
This gives a commutative diagram
\begin{equation}
\label{eq:diff-diag}
\begin{tikzcd}[column sep=70pt]
E^r_{p,q,w}(F;\Z) \ar[r, "f^r_{p,q,w}"]\ar[d, "d^r(F)"] & \oplus^{r_1} E^r_{p,q,w}(\R;\Z_2)\ar[d, "\oplus^{r_1}d^r(\R)"] \\
E^r_{p-1,q+r,w}(F;\Z) \ar[r, "f^r_{p-1,q+r,w}"] & \oplus^{r_1} E^r_{p-1,q+r,w}(\R;\Z_2).
\end{tikzcd}
\end{equation}
Whenever the source of $d^r(F)$ is nonzero, the target $E^r_{p-1,q+r,w}(F;\Z)$ is isomorphic to $(\Z/2)^{\oplus k}$ for some $k$ and $f^r_{p-1,q+r,w}$ is an isomorphism.
That is, the differential $d^r(F)$ is determined by the differential $d^r(\R)$ and the map $f^r_{p,q,w}$, i.e., of the sum of the real embeddings.
By \Cref{lem:HFR} and \Cref{lem:H3R} the map $f^r_{p,q,w}$ is surjective except possibly when restricted to a summand $H^{0,q'}(F;\Z)$ or $H^{1,q'}(F;\Z)$ of $E^r_{p,q,w}(F;\Z)$.
We define
\[
\ol{H}^{p,q,(l)}(F;\Z) = \coker(
g: H^{p-(2^{l+1}-1),q-(2^l-1)}(F;\Z) \to  H^{p,q}(F;\Z)
)
\]
where $g$ is the unique arrow making the diagram 
\begin{equation}
\label{eq:H-bar}
\begin{tikzcd}[column sep=70pt]%
H^{p-(2^{l+1}-1),q-(2^l-1)}(F;\Z) \ar[d]\ar[r, dashed, "g"] & H^{p,q}(F;\Z)\ar[d]  \\
\oplus^{r_1}H^{p-(2^{l+1}-1),q-(2^l-1)}(\R;\Z_2) \ar[r, "\rho^{2^{l+1}-1}u^{-2^{l-1}}"] & \oplus^{r_1} H^{p,q}(\R;\Z_2)
\end{tikzcd}
\end{equation}
commute (note that either $H^{p-(2^{l+1}-1),q-(2^l-1)}(F;\Z) = 0$, or the right vertical map in \eqref{eq:H-bar} is an isomorphism).
We define
\[
\wt{H}^{p,q}(F;\Z) = \ker(H^{p,q}(F;\Z) \to \oplus^{r_1}H^{p,q}(\R;\Z_2)).
\]
Note that
$\ol{H}^{p,q,(l)}(F;\Z) = 0$ if $p-(2^{l+1}- 1) > 1$
and $\wt{H}^{p,q}(F;\Z) = 0$ if $p \geq 3$.

\begin{theorem}
\label{thm:MGLF}
The $E^\infty$-page of the slice spectral sequence is
\[
E^\infty(F;\Z) \cong \bigoplus_{p,q,K} A^{p,q,K}
\]
where $K$ runs over all multi-indices of monomials in $L_*$.
Here
\begin{equation}
\label{eq:Apqk}
A^{p,q,K} = \begin{cases}
  \ol{H}^{p,q,(l)}(F;\Z) x_K & \text{if } l = \min\{l \colon x_{2^l-1} \vert x_K, 1 \leq l < \nu_2(q-p) \} < \infty \\
  \wt{H}^{p,q}(F;\Z) x_K & \text{if } x_{2^l - 1} \nmid x_K \text{ for all } 1 \leq l \leq \nu_2(q-p) \text{ and } 0 < \nu_2(q - p) < \infty \\
  H^{p,q}(F;\Z) x_K  & \text{otherwise}.
  \end{cases}
\end{equation}
By definition $\min \{ \} = \infty$.
\end{theorem}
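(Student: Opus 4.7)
The plan is to pull back the complete calculation over $\R$ in \Cref{thm:E8R} to $F$ via the real-embedding comparison \eqref{eq:diff-diag}. The crucial observation is that every differential $d^{2^k-1}$ in the slice spectral sequence of $\MGL$ lands in a motivic cohomology bidegree $(p',q')$ with $p'\geq 2^{k+1}-1\geq 3$. By \Cref{lem:H3R}, the map $f^r$ is an isomorphism on every such target. Hence the diagram \eqref{eq:diff-diag} determines each $d^r$ over $F$ uniquely from its counterpart over $\R$.

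I would then translate the differential $d^{2^k-1}(u^{2^{k-1}})=\rho^{2^{k+1}-1}x_{2^k-1}$ from \Cref{thm:diffs} into a formula on $F$: for $c\otimes x_K\in H^{p,q}(F;\Z)\otimes L_{|K|}$ the differential reads $d^{2^k-1}(c\otimes x_K)=g(c)\otimes x_{2^k-1}x_K$ whenever $\nu_2(q-p)=k$, with $g$ the canonical lift from \eqref{eq:H-bar} (taking $l=k$), and it vanishes otherwise. The Leibniz rule together with $d^r(x_i)=0$ for all $r$ (the putative targets $H^{2^{k+1}-1,2^k-1}(F;\Z)$ vanish by the range constraint of \Cref{thm:hOFS}) reduces the calculation to the action on the motivic cohomology part.

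With the differentials in hand, I would compute the $E^\infty$-page by induction on the page number $2^l-1$, in parallel with the proof of \Cref{thm:E8R}. A class $[c\otimes x_K]$ is killed on page $E^{2^l}$ precisely when $x_{2^l-1}\mid x_K$, $\nu_2(q-p)>l$ (so the source supports a nonzero incoming differential), and $c$ lies in $\im(g_l)$. Among the relevant values of $l$, the smallest acts first; one verifies that the images $\im(g_{l'})$ for larger valid $l'$ are already contained in $\im(g_l)$, so a single cokernel $\ol{H}^{p,q,(l)}(F;\Z)$ captures the surviving part (Case A). When no such $l$ exists but the class itself supports a nontrivial $d^{2^{\nu_2(q-p)}-1}$, only the kernel $\wt{H}^{p,q}(F;\Z)$ survives (Case B). In all remaining configurations neither an incoming nor an outgoing differential is nonzero and the full $H^{p,q}(F;\Z)\cdot x_K$ survives (Case C). This yields the trichotomy \eqref{eq:Apqk}.

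The hardest part will be the bookkeeping needed to confirm the containment $\im(g_{l'})\subseteq\im(g_l)$ for $l<l'$ in Case A, and to treat the boundary configurations in Case C where $\nu_2(q-p)$ coincides with an index $l$ for which $x_{2^l-1}\mid x_K$. In those boundary situations incoming and outgoing differentials coexist, and their interplay, together with the algebra structure of $L_*$ and the explicit form of the lifts $g_l$, must be tracked carefully to land on the stated cokernel/kernel/full-group description.
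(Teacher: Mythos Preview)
Your strategy is the paper's strategy: pull the differentials back from $\R$ via \eqref{eq:diff-diag}, run the same page-by-page induction as in \Cref{thm:E8R}, and sort the surviving summands into the three cases. The paper carries this out exactly, with the inductive formula \eqref{eq:Apqk} (finite-$r$ version) and then passes to the limit.

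Where your write-up differs is in what you flag as the ``hardest part''. You plan to verify $\im(g_{l'})\subseteq\im(g_l)$ for $l<l'$ directly. In fact this is immediate: the only $(p,q)$ for which $\ol{H}^{p,q,(l)}$ is nonzero have $p-(2^{l+1}-1)\in\{0,1\}$ (stated just after the definition of $\ol H$), so for any $l'>l$ the source $H^{p-(2^{l'+1}-1),\,q-(2^{l'}-1)}$ sits in negative cohomological degree and vanishes, giving $\im(g_{l'})=0$. The paper sidesteps even this observation by arguing dually on the \emph{source} side: once a source term is of $\ol H$-type (i.e.\ $x_{2^l-1}\mid x_K$ for some $l<k$), its outgoing $d^{2^k-1}$ vanishes because over $\R$ one has $\rho^{2^{k+1}-1}x_K=0$ in $E^r(\R)$ (the factor $\rho^{2^{l+1}-1}x_{2^l-1}$ already lies in $I_{k-1}$). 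Either phrasing closes Case A with no real bookkeeping.

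Your ``boundary configurations in Case C'' also dissolve on inspection. A term with $\nu_2(q-p)=l$ is a \emph{source} at page $2^l-1$, never a target there; if some $x_{2^{l'}-1}$ with $l'<l$ divides $x_K$ you are already in Case A, and if not you are in your own Case B. So no genuine coexistence of incoming and outgoing differentials occurs in Case C, and the ``interplay'' you anticipate does not arise.
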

\begin{proof}
We will prove inductively that
\begin{equation}
\label{eq:Erind}
E^r_{p,q,w}
\cong
\bigoplus_{p,q,K} A^{p,q,K,(r)}
\end{equation}
where
\begin{equation}
\label{eq:Apqk}
A^{p,q,K,(r)} = \begin{cases}
  \ol{H}^{p,q,(l)}(F;\Z) x_K & \text{if } l = \min\{l \colon x_{2^l-1} \vert x_K, 1 \leq l < l_{p,q,r} \} < \infty \\
  \wt{H}^{p,q}(F;\Z) x_K & \text{if } x_{2^l - 1} \nmid x_K \text{ for all } 1 \leq l \leq  l_{p,q,r}, \text{ and } 0 < \nu_2(q - p) < l_2(r)\\
  H^{p,q}(F;\Z) x_K  & \text{otherwise,}
  \end{cases}
\end{equation}
and $l_{p,q,r} = \min\{l_2(r)-1, \nu_2(q-p)\}$.

When $r = 1$ we are in the last case of \eqref{eq:Apqk}, so \eqref{eq:Erind} holds by definition.
Assume \eqref{eq:Erind} to be true inductively for $r$.
We must show that \eqref{eq:Erind} is true for $r+1$.
From \eqref{eq:diff-diag} we see that there are only $d^r$-differentials when $l_2(r) < l_2(r+1)$, so we may assume $r = 2^{k}-1$.
The only terms $A^{p,q,K,(r)}$ which can support a $d^r$ differentials are the terms with $\nu_2(q-p) = k$.
The target of a $d^r$-differential on $A^{p,q,K,(r)}$ is $A^{p',q',K,(r)}x_{2^k-1}$, for $q' = p + 2^{k+1}-1, q' = q+2^k - 1$.
We note that $\nu_2(q' - p') = \nu_2(q - p - 2^k) > k$.
Hence all terms with $\nu_2(q - p) < k$ are unchanged when passing from the $E^r$-page to the $E^{r+1}$-page.

Consider a term $A^{p,q,K}$ on the $E^r$-page.
Assume $\nu_{2}(q-p) = k$, and that $A^{p,q,K,(r)}$ is nonzero and supports a $d^r$-differential to $A^{p',q',K,(r)}x_{2^k-1}$.
Then $A^{p,q,K,(r)}$ is of the form $H^{p,q}(F;\Z)x_K$ or $\ol{H}^{p,q,(l)}(F;\Z)x_K$ for some $l < k$.
In the latter case the $d^r$-differential is zero, indeed, $\rho^{2^{k+1}-1}x_K = 0$.
In the former case the $d^r$-differential is nonzero if and only if
$\rho^{2^{k+1}-1}x_K$ is nonzero, that is, if and only if $x_{2^l - 1} \nmid x_K$ for all $1 \leq l < k$.
Hence $H^{p,q}(F;\Z)x_K$ is replaced by $\wt{H}^{p,q}(F;\Z)x_K$ if and only if $x_{2^l - 1} \nmid x_K$ for all $1 \leq l < k = \nu_{2}(q-p) = l_2(r+1)-1$.

Assume next $\nu_2(q-p) > k$.
That is, $A^{p,q,K,(r)}$ is potentially the target of a $d^r$-differential.
If we are in the first case of \eqref{eq:Apqk} there is nothing to show,
so assume that we are in the last case of \eqref{eq:Apqk}.
If $x_{2^k-1} \nmid x_K$ there can be no differential to $A^{p,q,K,(r)}$, so $A^{p,q,K,(r)} = A^{p,q,K,(r+1)} = H^{p,q}(F;\Z)x_K$.
If $x_{2^k-1} \vert x_K$ there is a differential from
$A^{p-(2^{k+1}-1),q - (2^{k}-1),K,(r)}/x_{2^k-1}$ and $A^{p,q,K,(r)}$ is replaced by $\ol{H}^{p,q,(k)}(F;\Z)x_K$.
This completes the inductive step.

Passing to the limit over $r$ we get the $E^\infty$-page.
\end{proof}
\begin{remark}
It is possible to give a presentation of $E^\infty(F;\Z)$ in terms of generators and relations.
There is a generator for every pair $(x_{2^k-1}, y), y \in \oplus_j H^{1,j2^k}(F;\Z)$,
and we must re-encode the multiplication in $H^{*,*}(F;\Z)$.
This can be compared with giving a presentation of
the subalgebra $k\{1, x^iy^j \vert i \geq 0, j > 1\} \subset k[x, y]$.
\end{remark}

\Cref{thm:MGLF} and comparison with $\MGL_\star(\R;\Z_2)$ determines $\MGL_\star(F;\Z)$ up to some indeterminacy in the additive and multiplicative structure in the part of $\MGL_\star(F;\Z)$ coming from $H^{1,*}(F;\Z)$ and $H^{2,*}(F;\Z)$.
For $2$-regular number fields with exactly one prime dividing $2$ (e.g., $\Q$) it might be possible to determine the extensions as in \cite{John-Paul}.

\section{Algebraic cobordism and $\zeta$-functions}
\label{sec:zeta}
Motivic cohomology of number fields and rings of $\mathcal S$-integers are qualitatively the same. In this section we exploit this likeness to compute $2$-complete algebraic cobordism over rings of $\mathcal S$-integers, $\mathcal S \supset \{2, \infty\}$ a finite set of places, and give a formula relating the order of the algebraic cobordism groups of rings of $2$-integers to special values of Dedekind $\zeta$-functions.
This is an analogy to Lichtenbaum's conjecture relating the order of algebraic $K$-theory of rings of integers to special values of Dedekind $\zeta$-functions, see \cite[Theorem 0.2]{Rognes-Weibel}.
An analogy to Lichtenbaum's conjecture for hermitian $K$-theory was proven in \cite[Theorem 1.10]{KRO}.
We use the motivic cohomology spectrum of Spitzweck over the ring of $\mathcal S$-integers in a number field $F$ \cite{Spitzweck:commutative}.

When $\mathcal{S} \supset \{2, \infty \}$ the residue fields of $\OFS$ all have odd characteristic so the characteristics are invertible in $\Z/2^n$.
Then the slices of $\MGL/2^n$ are 
$\s_q(\MGL/2^n) = \Sigma^{2q,q} \MZ/2^n \tensor L_q$ \cite[Theorem 11.3]{Spitzweck:commutative}.
When the set of places $\mathcal S$ is finite the 2-completed motivic cohomology groups are finitely generated outside of $(0,0)$, the Picard group $H^{2,1}(\OFS;\Z_2)$ can be nonzero, and we have an isomorphism $H^{p,q}(\OFS;\Z_2) \cong H^{p,q}(\R;\Z_2)$ for $p \geq 3$.
Hence the differentials in the slice spectral sequence over $\OFS$ are determined by the same procedure as over $F$ by comparison with the real embeddings.
For degree reasons the extra groups $H^{2,1}(\OFS;\Z_2)\tensor L_*$ do not interact with any terms in the slice spectral sequence or support any differentials.
They give rise to the extra summand $H^{2,1}(\OFS;\Z_2)\tensor L_{n+1}$ in $\MGL_{2n,n}(\OFS;\Z_2)$.
This summand has zero multiplication with motivic cohomology of positive weight.
The $2$-local motivic cohomology groups of $\OFS$ are finitely generated, hence $\lim_n$ is exact on the mod $2^n$ motivic cohomology groups, and completion before or after taking homotopy groups is the same for $\MZ$ and $\MGL$.
As in \Cref{sec:R2} we first run the slice spectral sequence for $\MGL/2^n$ over $\OFS$.
This gives filtrations of $\MGL_{*,*}(\OFS;\Z/2^n)$ for each $n$. Taking the limit over $n$ of the filtrations we obtain a filtration of $\MGL_{*,*}(\OFS;\Z_2)$.

\begin{theorem}
\label{thm:OFS}
\label{thm:MGLO}
The associated graded of the 2-complete algebraic cobordism groups of a ring of $\mathcal S$-integers in a number field $F$, $\mathcal S \supset \{2, \infty\}$, $\mathcal S$ finite, is
\[
E^\infty(\OFS;\Z_2) \cong \bigoplus_{p,q,K} A^{p,q,K}
\]
where $K$ runs over all multi-indices of monomials in $L_*$.
Here
\[
A^{p,q,K} = \begin{cases}
  \ol{H}^{p,q,(l)}(\OFS;\Z_2) x_K & \text{if } l = \min\{l \colon x_{2^l-1} \vert x_K, 1 \leq l < \nu_2(q-p) \} < \infty \\
  \wt{H}^{p,q}(\OFS;\Z_2) x_K & \text{if } x_{2^l - 1} \nmid x_K \text{ for all } 1 \leq l \leq \nu_2(q-p) \text{ and } 0 < \nu_2(q - p) < \infty \\
  H^{p,q}(\OFS;\Z_2) x_K  & \text{otherwise}.
  \end{cases}
\]
The groups $\ol{H}^{p,q,(l)}(\OFS;\Z_2)$ and $\wt{H}^{p,q}(\OFS;\Z_2)$ are defined similarly as in \Cref{sec:MGLF}.
\end{theorem}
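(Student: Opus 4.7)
The plan is to mimic the proof of \Cref{thm:MGLF}, now carried out over $\OFS$ in place of $F$. The structural input is almost identical: by \cite[Theorem 11.3]{Spitzweck:commutative}, since every residue characteristic of $\OFS$ is odd and hence invertible mod $2^n$, the slices satisfy $\s_q(\MGL/2^n) = \Sigma^{2q,q}\MZ/2^n \tensor L_q$, so the $E^1$-page of the mod $2^n$ slice spectral sequence has the same shape $E^1_{p,q,w} = H^{2q-p,q-w}(\OFS;\Z/2^n)\tensor L_q$ as in the number-field case.

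First I would set up the comparison with the real embeddings. Via $\OFS \hookrightarrow \R^{r_1}$ we obtain a map of slice spectral sequences $E^r_{*,*,*}(\OFS;\Z/2^n) \to \oplus^{r_1}E^r_{*,*,*}(\R;\Z/2^n)$, analogous to \eqref{eq:diff-diag}. By \Cref{lem:H3R} this comparison map is an isomorphism in motivic cohomological degree $p \geq 3$, and by \Cref{lem:HFR} it is surjective in degree $p=2$. Thus the differentials on the algebra generators $u^{2^{k-1}}$ (and their Leibniz consequences) are completely determined by those in \Cref{thm:diffs}, and the induction on the page index $r$ can be run verbatim as in the proof of \Cref{thm:MGLF}: at each stage one checks whether $\rho^{2^{k+1}-1}x_K$ is nonzero, and replaces a summand $H^{p,q}(\OFS;\Z_2)x_K$ either by $\wt{H}^{p,q}(\OFS;\Z_2)x_K$ (if the source supports a nonzero differential) or by $\ol{H}^{p,q,(l)}(\OFS;\Z_2)x_K$ (if it receives one). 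This yields the description stated in the theorem for the $E^\infty$-page of $\MGL/2^n$.

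Next I would account for the extra term $H^{2,1}(\OFS;\Z_2) \cong \Pic(\OFS)\tensor \Z_2$, which has no analogue over $F$. By \Cref{thm:hOFS} the motivic cohomology of $\OFS$ vanishes outside the wedge $1\leq p\leq q$ together with the two bidegrees $(0,0)$ and $(2,1)$. Every $d^r$-differential in the slice spectral sequence changes bidegree by $(-1, r, 0)$ in the $(p,q,w)$-indexing, so any differential out of the $(p,q)=(2,1)$ column would land in motivic cohomology degree $(1, 1+r)$, which lies on the diagonal and carries no new information, while any differential into it would originate in degree $(3, 1-r)$, which vanishes. Consequently the summand $H^{2,1}(\OFS;\Z_2)\tensor L_*$ is a permanent cycle, falls into the ``otherwise'' branch of the case analysis, and contributes to $\MGL_{2n,n}(\OFS;\Z_2)$ in the claimed way.

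Finally I would pass to the limit over $n$. Because $\mathcal S$ is finite, \Cref{thm:hOFS} together with the description of $H^{1,q}$ and $H^{2,q}$ in \cite[14]{Levine99} shows that each $H^{p,q}(\OFS;\Z_{(2)})$ is finitely generated, so the inverse system $\{H^{p,q}(\OFS;\Z/2^n)\}_n$ is Mittag-Leffler, $\lim^1$ vanishes, and $\lim_n$ is exact on the kernels and cokernels defining $\wt{H}^{p,q}$ and $\ol{H}^{p,q,(l)}$. Hence $\lim_n E^\infty(\OFS;\Z/2^n)$ is the associated graded of an exhaustive, Hausdorff, complete filtration on $\MGL_{*,*}(\OFS;\Z_2)$, giving the stated formula. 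The main obstacle, as above, is verifying rigorously that the Picard summand in bidegree $(2,1)$ neither supports nor receives any hidden differential; this is a bookkeeping exercise in the cohomological dimensions provided by \Cref{thm:hOFS}, but it must be checked at every page of the spectral sequence so that the simple description of $E^\infty$ genuinely holds for $\OFS$.
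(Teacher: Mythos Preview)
Your proposal follows essentially the same route as the paper: identify the slices via Spitzweck, determine the differentials by comparing along the real embeddings exactly as in \Cref{thm:MGLF}, handle the new Picard summand $H^{2,1}(\OFS;\Z_2)\tensor L_*$ by a degree argument, and then pass to the limit over $n$ using finite generation of the $2$-local motivic cohomology groups. The paper's own argument is the terse discussion immediately preceding the theorem, and your write-up expands it faithfully.

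There is, however, a bookkeeping slip in your treatment of the Picard term. The shift $(-1,r,0)$ is in the \emph{spectral-sequence} indices $(p,q,w)$, not in the cohomological bidegree. Since $E^1_{p,q,w}$ sits in cohomological bidegree $(2q-p,\,q-w)$, a $d^r$-differential moves the cohomological bidegree by $(2r+1,\,r)$, not by $(-1,r)$. Thus a differential \emph{out of} a term in cohomological bidegree $(2,1)$ lands in $(2r+3,\,r+1)$, which vanishes because $2r+3>r+1$ (outside the wedge of \Cref{thm:hOFS}); a differential \emph{into} it originates in $(1-2r,\,1-r)$, which vanishes because the first coordinate is negative. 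Your stated targets $(1,1+r)$ and sources $(3,1-r)$ come from applying the spectral-sequence shift directly to the cohomological bidegree, which is incorrect; in particular $(1,1+r)$ is not ``on the diagonal'' and need not vanish, so the reasoning as written does not establish the claim even though the conclusion is right. Once this indexing is fixed, the argument is complete and matches the paper's.
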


Recall the following theorem of Manfred Kolster:
\begin{theorem}[\protect{\cite[Theorem A.1]{Rognes-Weibel}}]
\label{thm:Kolster}
Let $F$ be a totally real abelian number field and $k \geq 1$.
Then
\[
\zeta_{F}(1 - 2k) \sim_2 \frac{\# H^{2}_{\et}(\OO_F[\frac{1}{2}]; \Z_2(2k)) }{\# H^{1}_{\et}(\OO_F[\frac{1}{2}]; \Z_2(2k)) }.
\]
Here $a \sim_2 b$ means that $a$ and $b$ have the same $2$-adic valuation.
\end{theorem}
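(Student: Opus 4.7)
The plan is to deduce Kolster's formula from the Main Conjecture of Iwasawa theory at $p=2$, which was proved for $F = \Q$ by Mazur--Wiles and extended to all totally real abelian number fields by Wiles. The detailed execution is carried out in the appendix of \cite{Rognes-Weibel}; below I sketch the strategy.

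First I would translate the two \'etale cohomology groups into Iwasawa-theoretic invariants. Let $F_\infty/F$ denote the cyclotomic $\Z_2$-extension, set $\Gamma = \mathrm{Gal}(F_\infty/F) \cong \Z_2$, and let $X$ be the Galois group over $F_\infty$ of the maximal pro-$2$ abelian extension unramified outside $2$. Combining Poitou--Tate duality with Iwasawa descent along $\Gamma$ expresses $H^2_\et(\OO_F[\tfrac12]; \Z_2(2k))$ as the $\Gamma$-coinvariants and $H^1_\et(\OO_F[\tfrac12]; \Z_2(2k))$ as the $\Gamma$-invariants of an appropriate Tate twist of $X$, with a small correction coming from $\mu_{2^\infty}(F_\infty)$. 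Since $F/\Q$ is abelian, $X$ is a module over $\Z_2[\mathrm{Gal}(F/\Q)]$ and decomposes into $\chi$-eigenspaces indexed by the characters of $\mathrm{Gal}(F/\Q)$.

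Second I would invoke the Main Conjecture: the characteristic ideal of each eigenspace $e_\chi X$ in the Iwasawa algebra $\Z_2[[\Gamma]]$ is generated by the Kubota--Leopoldt (Deligne--Ribet) $2$-adic $L$-function $G_\chi(T)$. Using the interpolation identity $L_2(\chi, 1-2k) = (1 - \chi(2) 2^{2k-1}) L(\chi, 1-2k)$ together with the Artin factorization $\zeta_F(s) = \prod_\chi L(\chi, s)$ over characters of $\mathrm{Gal}(F/\Q)$, the evaluation of $G_\chi(T)$ at $T = u^{2k} - 1$, where $u$ is a topological generator of $1 + 2\Z_2$, recovers $\zeta_F(1-2k)$ up to a $2$-adic unit. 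The structure theorem for finitely generated torsion $\Z_2[[\Gamma]]$-modules then identifies the $2$-adic valuation of this evaluation with the difference $v_2(\# X(2k)_\Gamma) - v_2(\# X(2k)^\Gamma)$, which by the first step equals $v_2(\# H^2_\et) - v_2(\# H^1_\et)$, yielding the stated congruence.

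The main obstacle is the Main Conjecture at $p = 2$ itself: unlike at odd primes, the canonical idempotent decomposition $X = X^+ \oplus X^-$ is unavailable, and the inclusion $\mu_2 \subset F$ forces additional Leopoldt-type unit corrections and extra care with the units/class-group exact sequences. Once the Main Conjecture is granted, the remaining work is essentially a careful bookkeeping of the Euler factors at primes above $2$, which for the ring of $2$-integers $\OO_F[\tfrac12]$ is absorbed cleanly because the interpolation formula already removes the Euler factor at $2$.
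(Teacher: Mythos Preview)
The paper does not prove this theorem: it is stated as a black-box citation of Kolster's result \cite[Theorem A.1]{Rognes-Weibel} and used immediately to deduce \Cref{thm:zeta}. So there is no ``paper's own proof'' to compare against; your sketch goes well beyond what the paper attempts.

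That said, your outline is broadly the right shape for how Kolster's argument in the Rognes--Weibel appendix proceeds: reduce the \'etale cohomology groups to (co)invariants of a twisted Iwasawa module, invoke Wiles's proof of the Main Conjecture for totally real fields to identify the characteristic ideal with the $2$-adic $L$-function, and read off the $2$-adic valuation via the structure theory of $\Lambda$-modules together with the Artin factorization of $\zeta_F$. Two points where you should be more careful if you actually want to carry this out: (i) the passage from $H^i_{\et}(\OO_F[\tfrac12];\Z_2(2k))$ to Iwasawa-module (co)invariants is not quite as clean as ``$\Gamma$-invariants and $\Gamma$-coinvariants of a twist of $X$''---there are correction terms, and the descent spectral sequence or Tate sequence must be tracked explicitly; and (ii) your claim that the Euler factor at $2$ is ``absorbed cleanly'' because you work over $\OO_F[\tfrac12]$ is the right intuition, but the interpolation formula for $L_2$ involves $\chi\omega^{-1}(2)$ rather than $\chi(2)$, and the precise matching of this with the missing local conditions at primes above $2$ is exactly where the $2$-adic bookkeeping is delicate. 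None of this is a gap in the paper, since the paper simply cites the result.
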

Define the subgroups
$L_t' = \Z\{x_K \in L_t \colon \exists n, x_{2^n - 1} \mid x_K \}$
and
$L_t'' = \Z\{x_K \in L_t \colon \forall n, x_{2^n - 1} \nmid x_K \}$
of $L_t$,
such that $L_t' \oplus L_t''= L_t$.
We write $n^{L_t}$ for $n^{\rk_{\Z}L_t}$ and so on.
Combining \Cref{thm:MGLO} with \Cref{thm:Kolster} we get the following relation between algebraic cobordism of $\OO_F[\frac{1}{2}]$ and special values of the Dedekind $\zeta$-function of $F$.
\begin{corollary}
\label{thm:zeta}
Let $F$ be a totally real abelian number field with $r_1$ real embeddings and $n \geq 1$ an integer.
Then
\begin{equation}
\label{eq:zeta}
2^{r_1L_{2n+w}''}
\frac{\# \MGL_{4n+2w-2,w}(\OO_F[\frac{1}{2}];\Z_2) }
{\# \MGL_{4n+2w-3,w-1}(\OO_F[\frac{1}{2}];\Z_2) }
\sim_2
(\zeta_{F}(1-2n))^{L_{2n+w}}.
\end{equation}
\end{corollary}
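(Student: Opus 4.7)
The plan is to combine \Cref{thm:MGLO} with the Beilinson--Lichtenbaum isomorphism and \Cref{thm:Kolster}. I first use \Cref{thm:MGLO} and exactness of the associated filtration to write
\[
\#\MGL_{P,W}(\OO_F[\tfrac12];\Z_2) = \prod_{(p,q,K)} \#A^{p,q,K},
\]
summed over triples contributing to bidegree $(P,W)$ in the slice spectral sequence. The slice conventions give that the numerator $(P,W) = (4n+2w-2, w)$ has contributions indexed by $(p,q)$ with $p = 2(|K|-2n)+2$ (even, $\geq 2$) and $|K| \in \{2n+w, \ldots, 4n+w-2\}$; the denominator $(P,W) = (4n+2w-3, w-1)$ has $p = 2(|K|-2n)+1$ (odd, $\geq 1$), with $|K|$ shifted by $-1$. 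The boundary contributions are $(p,q) = (2, 2n)$ at Lazard degree $2n+w$ in the numerator and $(1, 2n)$ at Lazard degree $2n+w-1$ in the denominator; these will produce the zeta value.

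I next analyze the possibilities for $A^{p,q,K}$ as in \Cref{thm:MGLO}. For $p \geq 3$ with $p \equiv q \pmod 2$, \Cref{lem:H3R} and \Cref{lem:HR} give $H^{p,q}(\OO_F[\tfrac12];\Z_2) = (\Z/2)^{r_1}$ (and $0$ otherwise); in this range $\wt H = 0$ and $\ol H^{p,q,(l)}$ is either $0$ or $(\Z/2)^{r_1}$ depending on whether the comparison map in \eqref{eq:H-bar} is an isomorphism on $\R$. At the boundary bidegrees $(2, 2n)$ and $(1, 2n)$, the source of $g$ in \eqref{eq:H-bar} has negative first index for every $l \geq 1$, so $\ol H = H$; at $(1, 2n)$ one has $\nu_2(q-p)=0$, and only case (3) of \Cref{thm:MGLO} applies, so $A^{1,2n,K} = H^{1,2n}$ for all $K$; and when $n \geq 2$, \Cref{lem:HFR} gives $\#\wt H^{2,2n} = \#H^{2,2n}/2^{r_1}$, with $\wt H$ occurring precisely for $K \in L''_{2n+w}$.

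The main step is the combinatorial bookkeeping: one pairs up contributions between numerator and denominator and cancels them, accounting carefully for the Lazard degree shift from $2n+w-1$ to $2n+w$, the $\ol H$-induced vanishings in the higher strata, and the $\wt H$-induced reductions at $(p,q)=(2,2n)$. The expected outcome is
\[
\frac{\#\MGL_{4n+2w-2, w}(\OO_F[\tfrac12];\Z_2)}{\#\MGL_{4n+2w-3, w-1}(\OO_F[\tfrac12];\Z_2)} \cdot 2^{r_1 L''_{2n+w}}
\sim_2
\left(\frac{\#H^{2,2n}(\OO_F[\tfrac12];\Z_2)}{\#H^{1,2n}(\OO_F[\tfrac12];\Z_2)}\right)^{L_{2n+w}}.
\]
The Beilinson--Lichtenbaum isomorphism then identifies $H^{i,2n}(\OO_F[\tfrac12];\Z_2) \cong H^i_{\et}(\OO_F[\tfrac12];\Z_2(2n))$ for $i \in \{1, 2\}$, and \Cref{thm:Kolster} replaces the right-hand ratio by $\zeta_F(1-2n)$ up to $\sim_2$, giving the stated formula.

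The hard part is the bookkeeping: one must show that the large number of $(\Z/2)^{r_1}$-contributions from the $p \geq 3$ strata on both sides, the mismatched Lazard multiplicities at the boundary, and the various $\ol H, \wt H$ corrections all conspire to produce precisely the displayed identity, leaving the specific factor $2^{r_1 L''_{2n+w}}$ on the left and the clean exponent $L_{2n+w}$ on the right. This amounts to a combinatorial identity on Lazard monomials that uses the definitions of $L', L''$ and the case distinctions in \Cref{thm:MGLO}.
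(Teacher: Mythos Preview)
Your overall framework is correct: express the two cardinalities via \Cref{thm:MGLO}, cancel most of the contributions, reduce to a ratio of $H^2/H^1$ at weight $2n$, and apply \Cref{thm:Kolster}. But you explicitly leave the ``combinatorial bookkeeping'' undone, and that bookkeeping is the entire substance of the proof. As written, this is a proof outline, not a proof.

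The mechanism the paper uses, which you have not identified, is that for each fixed multi-index $K$ the contribution to the numerator is $A^{p',q',K}$ and to the denominator is $A^{p'+1,q'+1,K}$, where $p'=-p+2|K|$ and $q'=-w+|K|$; and multiplication by $\rho$ gives an isomorphism
\[
A^{p',q',K}\xrightarrow{\ \rho\ }A^{p'+1,q'+1,K}
\]
whenever $p'>2$. This is checked case by case in the trichotomy of \Cref{thm:MGLO}; the only subtle case is $\ol H$, where one must show that the parity assumptions $p$ even and $(p+2)/2-w$ even rule out $p'-(2^{l+1}-1)\in\{0,1\}$, so that the defining map $g$ in \eqref{eq:H-bar} behaves identically on source and target. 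Once this $\rho$-isomorphism is in hand, everything with $p'>2$ cancels in the ratio and only the low-$p'$ boundary survives; the $L'/L''$ dichotomy then falls out immediately from which of the three cases of \Cref{thm:MGLO} applies at the boundary, together with \Cref{lem:H12R} to convert $\wt H^{2,2n}$ into $2^{-r_1}\cdot\#H^{2,2n}$.

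Two further points. First, your talk of a ``Lazard degree shift from $2n+w-1$ to $2n+w$'' suggests you are trying to pair contributions across different $K$; the paper's pairing is at the \emph{same} $K$, with the cohomological bidegree shifted by $(1,1)$ via $\rho$. Second, the $\ol H$-corrections you anticipate in the ``higher strata'' do not need separate treatment: under the parity hypotheses they are absorbed into the $\rho$-isomorphism, which is precisely why the paper checks those parity exclusions. Without the $\rho$-isomorphism as an organizing principle, the cancellation you describe would be very difficult to verify directly.
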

\begin{proof}
The cardinality of $\MGL_{p,w}(\OO_F[\frac{1}{2}];\Z_2)$ is the product of the cardinalities of the groups
\[
A^{-p + 2\vert K \vert, -w + \vert K \vert, K},\quad x_K \in L_*.
\]
The cardinality of $\MGL_{p-1,w-1}(\OO_F[\frac{1}{2}];\Z_2)$ is the product of the cardinalities of the groups
\[
A^{-p+1 + 2\vert K \vert, -w+1 + \vert K \vert, K},\quad x_K \in L_*.
\]
From now on we assume $p$ and $(p+2)/2 - w$ to be even.
Then multiplication by $\rho$ induces an isomorphism
\begin{equation}
\label{eq:rho-mult}
A^{-p + 2\vert K \vert, -w + \vert K \vert, K}
\to 
A^{-p +1 + 2\vert K \vert, -w + 1 + \vert K \vert, K}
\end{equation}
when $-p + 2\vert K \vert > 2$.
Indeed, this follows from parity considerations: %
Let $p' = -p + 2\vert K \vert, q' = -w + \vert K \vert$.
If $A^{p', q', K}$ is given by the two last cases of \eqref{eq:Apqk} then \eqref{eq:rho-mult} is clear,
so we may assume
\[
A^{p', q', K}
=
\ol{H}^{p',q',(l)}(F;\Z)x_K,
\qquad
A^{p'+1, q'+1, K}
=
\ol{H}^{p'+1,q'+1,(l)}(F;\Z)x_K,
\]
for some minimal $l$ such that $x_{2^{l}-1}\vert x_K$ and $q' - p' = j2^{l+1}$ for some $j$.
The statement is then equivalent to the map of cokernels
\[
\begin{tikzcd}
\coker\Bigl(H^{p'-(2^{l+1} - 1), q' - (2^l - 1)}(F;\Z) \ar[r]\ar[d, "\rho"] & \oplus^{r_1}H^{p'-(2^{l+1}-1),q'-(2^l - 1)}(\R;\Z_2)\Bigr)\ar[d, "\rho"] \\
\coker\Bigl(H^{p'+1-(2^{l+1} - 1), q'+1 - (2^l - 1)}(F;\Z) \ar[r] & \oplus^{r_1}H^{p'+1-(2^{l+1}-1),q'+1-(2^l - 1)}(\R;\Z_2)\Bigr)
\end{tikzcd}
\]
being an isomorphism.
In general this map is not an isomorphism if $p' - (2^{l+1} - 1) = 0, 1$,
but this possibility is excluded by the assumptions on $p$ and $w$.
Indeed, $p' - (2^{l+1} -1) \neq 0$,
since $p' = -p + 2 \vert K \vert$ is even by assumption.
If $p' - (2^{l+1} -1) = 1$,
then $-p + 2\vert K \vert = 2^{l+1}$ and $-w + \vert K \vert = (j+1)2^{l+1}$. Hence, $p - 2w = (j+1)2^{l+2} - 2^{l+1}$, contradicting $(p+2)/2 - w$ being even.

With the above assumptions on $p$ and $w$ we get
\begin{align*}
\frac{\# \MGL_{p,w}(\OO_F[\frac{1}{2}];\Z_2) }{\# \MGL_{p-1,w-1}(\OO_F[\frac{1}{2}];\Z_2) }
&=
\left(\frac{\# H^{2,(p+2)/2-w} }
{\# H^{1,(p+2)/2-w} }\right)^{L_{(p+2)/2}'}
\left(\frac{\# \wt{H}^{2,(p+2)/2-w} }
{\# \wt{H}^{1,(p+2)/2-w} }\right)^{L_{(p+2)/2}''} \\
&=
\left(\frac{\# H^{2,(p+2)/2-w} }
{\# H^{1,(p+2)/2-w} }\right)^{L_{(p+2)/2}'}
\left(2^{-r_1}\frac{\# H^{2,(p+2)/2-w} }
{\# H^{1,(p+2)/2-w} }\right)^{L_{(p+2)/2}''} \\
&=
2^{-r_1 L_{(p+2)/2}''} \left(\frac{\# H^{2,(p+2)/2-w} }
{\# H^{1,(p+2)/2-w} }\right)^{L_{(p+2)/2}}.
\end{align*}
Here we use \Cref{lem:H12R} for the second equality.
Hence,
\[
2^{r_1L_{(p+2)/2}''}\frac{\# \MGL_{p,w}(\OO_F[\frac{1}{2}];\Z_2) }{\# \MGL_{p-1,w-1}(\OO_F[\frac{1}{2}];\Z_2) }
\sim_2
(\zeta_F(1+w-(p+2)/2))^{L_{(p+2)/2}}.
\]
A change of variables yields \eqref{eq:zeta}.
\end{proof}

\section{Algebraic cobordism of totally imaginary number fields and fields of low $2$-cohomological dimension}
\label{sec:final}
For completeness we state the following results on algebraic cobordism of totally imaginary number fields and fields with $2$-cohomological dimension less than or equal to $2$.
For such fields the slice spectral sequence collapses on the $E^1$-page for degree reasons.
\label{sec:MGLC}
\label{sec:lowcoh}

\begin{theorem}
\label{thm:MGLC}
Let $F$ be a totally imaginary number field. Then
\[
\MGL_{*,*}(F;\Z) \cong H^{*,*}(F;\Z)\tensor L_*.
\]
Here $H^{p,q}(F;\Z)$ is a summand of $\MGL_{-p,-q}(F;\Z)$,
and $L_k$ is identified with $\MGL_{2k,k}(F;\Z)$.
\end{theorem}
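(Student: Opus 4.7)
The plan is to exploit the vanishing of motivic cohomology in high degrees forced by $r_1 = 0$ to obtain both collapse of the slice spectral sequence at $E^1$ and triviality of the filtration on $\MGL_{*,*}(F;\Z)$. Since $F$ is totally imaginary and $H^{p,q}(F;\Z) \cong \oplus^{r_1} H^{p,q}(\R;\Z_2) = 0$ for $3 \leq p \leq q$ by the number field case of \Cref{thm:hOFS}, together with the standard vanishing $H^{p,q}(F;\Z) = 0$ for $p > q$ and the fact that $H^{2,1}(F;\Z) = \Pic(F) = 0$ for any field $F$, the only nonzero bidegrees of $H^{*,*}(F;\Z)$ are $(0,0)$, $(1,q')$ for $q' \geq 1$, and $(2,q')$ for $q' \geq 2$.

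Next I would show the slice spectral sequence collapses at $E^1$. The $d^r$-differential $E^r_{p,q,w} \to E^r_{p-1,q+r,w}$ is a map into $H^{2(q+r)-(p-1), q+r-w}(F;\Z) \tensor L_{q+r}$ on the $E^1$-page. Whenever the source is nonzero we have $2q - p \geq 0$, so the first cohomological index of the target is at least $2r + 1 \geq 3$, which forces the target to vanish. Hence every $d^r$ is zero and $E^\infty(F;\Z) = E^1(F;\Z)$.

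Finally I would verify that there are no hidden extensions. For each bidegree $(p,w)$, a nonzero summand of $E^\infty_{p,\bullet,w}$ at filtration level $q$ requires $2q - p \in \{0,1,2\}$, which pins down $q$ up to the parity of $p$. When $p$ is odd only $q = (p+1)/2$ (with $2q-p = 1$) is possible. When $p$ is even the two candidates $q = p/2$ and $q = (p+2)/2$ yield disjoint $w$-ranges: the first requires $H^{0, p/2-w}(F;\Z) \neq 0$, forcing $w = p/2$, while the second requires $H^{2, (p+2)/2-w}(F;\Z) \neq 0$, forcing $w \leq p/2 - 1$ in view of the vanishing $H^{2,0}(F;\Z) = H^{2,1}(F;\Z) = 0$. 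Consequently the slice filtration on each $\MGL_{p,w}(F;\Z)$ has length at most one, so $\MGL_{p,w}(F;\Z)$ coincides with the unique nonzero layer $H^{2q-p, q-w}(F;\Z) \tensor L_q$. Summing over bidegrees produces the claimed isomorphism $\MGL_{*,*}(F;\Z) \cong H^{*,*}(F;\Z) \tensor L_*$, with $H^{p',q'}(F;\Z)$ sitting in $\MGL_{-p',-q'}(F;\Z)$ as the $L_0$-tensor factor and $L_k$ identified with $\MGL_{2k,k}(F;\Z)$ via the Hopkins--Morel isomorphism recorded in the introduction. The potential obstacle here---splitting additive extensions in the slice filtration---evaporates thanks to the parity-based disjointness just observed, which is what makes the totally imaginary case so much cleaner than the real case treated in \Cref{thm:MGLF}.
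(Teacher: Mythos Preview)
Your proof is correct and follows the same route as the paper: use the vanishing of $H^{p,q}(F;\Z)$ in high cohomological degree (here forced by $r_1=0$) to collapse the slice spectral sequence at $E^1$, then observe that in each bidegree the filtration has length at most one so there are no extension problems. The paper's proof is a terse two-line version of exactly this argument; your additional work spelling out why $2q-p\in\{0,1,2\}$ forces a unique $q$ for each $(p,w)$, and in particular your explicit use of $H^{2,1}(F;\Z)=\Pic(F)=0$ to separate the two even-$p$ cases, is the detail hiding behind the paper's phrase ``the slice filtration has length $1$''.
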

\begin{proof}
The motivic cohomology $H^{p,q}(F;\Z)$ is zero for $p=0, q \neq 0$ or $p > 2$.
Hence, the slice spectral sequence collapses at the $E^1$-page, cf.~\Cref{fig:E1-page}.
There are no hidden extensions since in any fixed bidegree of $\MGL_{*,*}(F;\Z)$ the slice filtration has length $1$.
\end{proof}
\begin{theorem}
\label{thm:2coh}
Let $F$ be a field of characteristic not 2 with $2$-cohomological dimension less than or equal to 2,
such that $H^{p,q}(F;\Z/2)$ is finite for all $p,q$, and $H^{0,q}(F;\Z_2)$ is zero for $q \neq 0$.
Then
\[
\MGL_{*,*}(F;\Z_2) \cong H^{*,*}(F;\Z_2)\tensor L_*.
\]
Here $H^{p,q}(F;\Z_2)$ is a summand of $\MGL_{-p,-q}(F;\Z)$,
and $L_k$ is identified with $\MGL_{2k,k}(F;\Z)$.
\end{theorem}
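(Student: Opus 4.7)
The plan is to follow the approach of \Cref{thm:MGLC}: establish collapse of the slice spectral sequence for $\MGL/2^n$ at the $E^1$-page for degree reasons, pass to the limit over $n$ to obtain a filtration on $\MGL_{*,*}(F;\Z_2)$, and then observe that this filtration has at most one nonzero quotient in each bidegree so that no extension problems arise.

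First, I would verify collapse. Under $\cd_2(F) \leq 2$, the Beilinson-Lichtenbaum conjecture identifies $H^{p,q}(F;\Z/2^n)$ with $H^p_{\et}(F;\mu_{2^n}^{\tensor q})$ for $p \leq q$, so $H^{p,q}(F;\Z/2^n) = 0$ for $p > 2$; motivic cohomology of a field vanishes for $p > q$ anyway. The differential $d^r$ sends $H^{2q-p, q-w}(F;\Z/2^n) \tensor L_q$ to $H^{2q-p+2r+1, q-w+r}(F;\Z/2^n) \tensor L_{q+r}$, whose motivic cohomological degree is at least $2r+1 \geq 3$, so the target vanishes. Thus the mod $2^n$ slice spectral sequence collapses at $E^1$.

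Second, the hypothesis that $H^{p,q}(F;\Z/2)$ is finite propagates via the Bockstein short exact sequence to finiteness of $H^{p,q}(F;\Z/2^n)$ in each bidegree, and hence of $E^\infty_{p,q,w}(\Z/2^n) = E^1_{p,q,w}(\Z/2^n)$. By the discussion in \Cref{sec:slice}, taking $\lim_n$ then yields an exhaustive, Hausdorff, and complete filtration on $\MGL_{*,*}(F;\Z_2)$ with associated graded $E^\infty_{p,q,w}(\Z_2) \cong H^{2q-p, q-w}(F;\Z_2) \tensor L_q$.

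Finally, I would check by case analysis that for each fixed bidegree $(p,w)$ at most one slice contributes to this filtration. Using $H^{0,q'}(F;\Z_2) = 0$ for $q' \neq 0$, $H^{a,b}(F;\Z_2) = 0$ for $a > b$, and $H^{a,b}(F;\Z_2) = 0$ for $a > 2$, the motivic bidegree $(2q-p, q-w)$ must lie in $\{(0,0)\} \cup \{(1,b) : b \geq 1\} \cup \{(2,b) : b \geq 2\}$. For $p$ odd only the option $(1,b)$ can occur, forcing $q = (p+1)/2$; for $p$ even, $(0,0)$ requires $p = 2w$ and $q = w$, while $(2,b)$ requires $p \geq 2w+2$ and $q = (p+2)/2$, and these are mutually exclusive. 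Hence the filtration has length at most one in every bidegree, precluding both additive and multiplicative extensions, and the manifestly multiplicative identification $E^\infty \cong H^{*,*}(F;\Z_2) \tensor L_*$ lifts to the claimed algebra isomorphism. The main obstacle is the bookkeeping of this case analysis, ensuring that slices of distinct filtration degree cannot coexist in a fixed bidegree; collapse and the passage to the limit are formal consequences of the cohomological dimension and finiteness hypotheses.
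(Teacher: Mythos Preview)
Your proposal is correct and follows essentially the same approach the paper intends: the paper leaves \Cref{thm:2coh} without an explicit proof, relying on the reader to adapt the argument of \Cref{thm:MGLC} (collapse of the slice spectral sequence for degree reasons, then length-one filtration in each bidegree), and you have carried this out carefully, including the necessary passage through the mod $2^n$ spectral sequences and the limit to handle $\Z_2$-coefficients. Your case analysis verifying that at most one slice contributes in each bidegree is exactly the content behind the paper's one-line claim that ``the slice filtration has length $1$.''
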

In particular \Cref{thm:2coh} applies to local fields in characteristic $0$, cf.~\cite[Corollary 5.10]{Ormsby}, and finite fields of odd characteristic.

\section{The motivic Brown-Peterson spectrum and motivic Morava $K$-theory}
\label{sec:morava}
In this section we compute the motivic homotopy groups of the 2-completed (truncated) motivic Brown-Peterson spectrum $\BPGL$ over $\R$ at the prime 2, and of mod 2 Morava $K$-theory $K(n)$ over fields with low virtual cohomological dimension of characteristic not 2. The techniques are analogous to the previous sections. As part of the computation we give a description of the action of the mod 2 motivic Steenrod algebra on powers of $\tau$. Except when we discuss the general construction of $\BPGL$ and $K(n)$ we work at the prime $2$, so $\BPGL$ will be a $2$-local spectrum.
Most of the results over the real numbers in this section have been obtained previously by Yagita in \cite{Yagita:atiyah}.

For the construction of $\BPGL$ and $K(n)$, we follow  \cite{Hu-Kriz:remarks}, \cite{Vezzosi} and \cite[Chapter 4]{Ravenel}. See also \cite[Remark 5.3]{Spitzweck:slices}.
Localizing the bijection
$
[\MGL, \MGL]^\star \cong \MGL^{\star}[\![ b_1, b_2, \dots]\!]
$
\cite[Proposition 6.2]{NSO}
at $\ell$ we get the Quillen idempotent $e : \MGL_{(\ell)} \to \MGL_{(\ell)}$ \cite[Theorem 4.1.12]{Ravenel}.
The $\ell$-local motivic Brown-Peterson spectrum is defined as
\[
\BPGL = e\MGL_{{(\ell)}} = \hocolim(\MGL_{(\ell)} \xrightarrow{e} \MGL_{(\ell)} \xrightarrow{e} \dots).
\]
Since slices commute with homotopy colimits we get
\begin{align*}
\s_q(\BPGL)
& = \s_q(\hocolim(\MGL_{(\ell)} \xrightarrow{e} \MGL_{(\ell)} \xrightarrow{e} \dots)) \\
& = \hocolim(\s_q(\MGL_{(\ell)}) \xrightarrow{\s_q(e)} \s_q(\MGL_{(\ell)}) \xrightarrow{\s_q(e)} \dots) \\
& = \hocolim(\MZ_{(\ell)} \tensor L_q \xrightarrow{\s_q(e)} \MZ_{(\ell)} \tensor L_q \xrightarrow{\s_q(e)} \dots) \\
& = \hocolim(\MZ_{(\ell)} \tensor L_q \xrightarrow{1 \tensor e} \MZ_{(\ell)} \tensor L_q \xrightarrow{1\tensor e} \dots) \\
& = \MZ_{(\ell)} \tensor \colim(\Z_{(\ell)} \tensor L_q \xrightarrow{1 \tensor e} \Z_{(\ell)} \tensor L_q \xrightarrow{1 \tensor e} \dots) \\
& = \MZ_{(\ell)} \tensor (\Z_{(\ell)}[v_1, v_2, \dots])_q,
\end{align*}
where $\vert v_i \vert = \ell^i - 1$. %
This computation has been carried out in more detail in \cite{Levine-Tripathi}.

The truncated Brown-Peterson spectra are defined to be
\[
\BPGL\langle n \rangle = \BPGL/(v_{n+1}, v_{n+2}, \dots).
\]
The quotient is obtained by inductively forming the cofiber $\BPGL/(v_{n+1}, \dots, v_{n+k-1}, v_{n+k})$ and then taking a homotopy colimit, see \cite{Spitzweck:slices}.
In particular $\BPGL\langle 0 \rangle = \MZ_{(\ell)}$,
and $\BPGL\langle 0 \rangle = \kgl_{(\ell)} = \f_0(\KGL_{(\ell)})$ is the (very) effective cover of $\ell$-local algebraic $K$-theory \cite{Spitzweck:slices}.
The slices are
\[
\s_q(\BPGL\langle n \rangle) \cong \MZ_{(\ell)} \tensor (\Z_{(\ell)}[v_1, \dots, v_n])_q.
\]

The $n$th mod $\ell$ Morava $K$-theory is defined to be
\[
K(n) = \BPGL/(\ell, v_i \vert i \neq n)[v_n^{-1}].
\]
The slices are
\[
\s_q(K(n)) = \MZ/\ell \tensor (\Z/\ell[v_n^{\pm 1}])_q.
\]
Hence for $\s_q(K(n))$ to be nonzero $q$ must be a multiple of $\ell^{i} - 1$.
From now on we fix $\ell = 2$.

By adapting the proof of \Cref{thm:MGLR} we obtain the following calculation of the motivic homotopy groups of $\BPGL_{2}^{\wedge}$ and $\BPGL\langle n\rangle_{2}^{\wedge}$ over the reals.
This has been stated previously by Hu and Kriz in \cite{Hu-Kriz:real}, \cite{Hu-Kriz:remarks}.
\begin{theorem}
\label{thm:BPGL}
The motivic homotopy groups of $2$-complete $\BPGL$ and $\BPGL\langle n\rangle$ over the real numbers are
\[
\BPGL_\star(\R;\Z_2) \cong
\frac{\Z_2[\rho, z_{j,l}  \vert l, j \geq 0]}
{(2\rho, \rho^{2^{j+1} - 1}z_{j,l}, z_{k,l}z_{a,b} = z_{k,l+b2^{a-k}}z_{a,0}, \text{ for } a \geq k)},
\]
\[
\BPGL\langle n \rangle_\star(\R;\Z_2) \cong
\frac{\Z_2[\rho, z_{j,l}, u^{2^n}  \vert l,j \geq 0, n \geq j ]}
{(2\rho, \rho^{2^{j+1} - 1}z_{j,l}, z_{k,l}z_{a,b} = z_{k,l+b2^{a-k}}z_{a,0}, \text{ for } a \geq k, z_{j,l}u^{2^n} = z_{j,l+2^{n-j}})}.
\]
\end{theorem}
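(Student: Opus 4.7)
The plan is to adapt the proof of \Cref{thm:MGLR} verbatim, using the descriptions of the slices $\s_q(\BPGL) \cong \Sigma^{2q,q}\MZ_{(2)} \tensor (\Z_{(2)}[v_1, v_2, \dots])_q$ and $\s_q(\BPGL\langle n \rangle) \cong \Sigma^{2q,q}\MZ_{(2)} \tensor (\Z_{(2)}[v_1, \dots, v_n])_q$ in place of $\s_q(\MGL) = \Sigma^{2q,q}\MZ \tensor L_q$. As in \Cref{sec:MGLR}, I would first run the slice spectral sequence for $\BPGL/2^m$ (resp. $\BPGL\langle n\rangle/2^m$) over $\R$, pass to the limit over $m$, and then resolve the multiplicative extensions by the same combinatorial/parity argument used in the proof of \Cref{thm:MGLR}.

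Over $\R$ the mod $2^m$ $E^1$-page is generated as an algebra by $\rho,\tau,u$ together with $v_1,v_2,\dots$ (resp.\ $v_1,\dots,v_n$), where $v_k$ sits in tridegree $(2(2^k-1), 2^k-1, 2^k-1)$. The only algebra generators that can support differentials are the powers of $u$. I would prove the analogue of \Cref{thm:diffs}:
\[
d^{2^k-1}(u^{2^{k-1}}) = \rho^{2^{k+1}-1}\,v_k
\]
for all $k\geq 1$ in the $\BPGL$ case, and for $1\leq k\leq n$ in the $\BPGL\langle n\rangle$ case, with $d^{2^k-1}(u^{2^{k-1}})=0$ when $k>n$ for $\BPGL\langle n\rangle$ (since $v_{n+1}=v_{n+2}=\cdots=0$). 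The key input—the analogue of \Cref{lem:rhotors}—identifies $\BPGL[\rho^{-1}]$ with $\Phi^{C_2}R^{C_2}_\C(\BPGL)=\Phi^{C_2}(BP\R)$ via \Cref{lem:fixed-points-realization} and \Cref{thm:tom}, and one uses the Hu--Kriz/HHR computation (\Cref{lem:HHR1}, \Cref{lem:HHR2}) to see that $v_k$ is the unique (up to units) class whose image in $\pi_*(\Phi^{C_2}BP\R)$ vanishes, forcing the differential to be nonzero for the same reason as in \Cref{thm:diffs}.

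With the differentials in hand, the $E^\infty$-page is computed by the same inductive filtration argument that produced \eqref{eq:Einfty}, with $v_k$ playing the role of $x_{2^k-1}$ and with no analogues of the ``missing'' generators $x_h$ for $h\neq 2^j-1$. Passing to the limit over $m$ kills the $\tau$-multiples as in \Cref{sec:R2}, leaving precisely the $\Z_2$-algebra generated by $\rho$ and representatives $z_{j,l}=[u^{l2^j}v_j]$ (with $v_0=2$), subject to $2\rho=0$ and $\rho^{2^{j+1}-1}z_{j,l}=0$. In the truncated case, the classes $u^{l2^n}$ (with $l\geq 1$) now survive because there is no further $v_k$ to receive their differentials; this contributes the extra polynomial generator $u^{2^n}$ and the relations $z_{j,l}u^{2^n}=z_{j,l+2^{n-j}}$, which encode $u^{l2^j}v_j\cdot u^{2^n}=u^{(l+2^{n-j})2^j}v_j$ after choosing compatible representatives.

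The main obstacle, just as in the proof of \Cref{thm:MGLR}, is ruling out hidden multiplicative extensions $z_{k,l}z_{a,b}=z_{k,l+b2^{a-k}}z_{a,0}+(\text{higher filtration})$ for $a\geq k$. I would copy the argument verbatim: choose representatives so that $\rho^i[u^{l2^j}v_j]=[\rho^iu^{l2^j}v_j]$, compare degrees, and observe that any correction term $\rho^i[u^{n2^m}v_m]v_K$ of higher filtration would force the linear inequality \eqref{eq:linineq} to have a solution. The argument of \Cref{thm:MGLR} shows it does not, even when the exponents range over all of $\Z$ (as noted in \Cref{rmk:elementary}); in the $\BPGL\langle n\rangle$ case one further observes that the relation $z_{j,l}u^{2^n}=z_{j,l+2^{n-j}}$ is lift-compatible because no other higher-filtration class of the correct tridegree exists by the same inequality. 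Both algebra presentations then follow.
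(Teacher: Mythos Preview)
Your proposal is correct and essentially reproduces the argument of \Cref{sec:MGLR} with $v_k$ in place of $x_{2^k-1}$. The paper takes a shorter route: since $\BPGL$ is a \emph{retract} of $\MGL_{(2)}$ via the Quillen idempotent, its slice spectral sequence is a retract of that of $\MGL_{(2)}$, so the differentials, the $E^\infty$-page, and the extension analysis are inherited verbatim from \Cref{thm:diffs}, \Cref{thm:E8R} and \Cref{thm:MGLR} without any new input; for $\BPGL\langle n\rangle$ the paper just observes that the quotient map $\BPGL\to\BPGL\langle n\rangle$ determines the differentials. Your route is longer but self-contained---it does not invoke the splitting of $\MGL_{(2)}$---and your handling of the truncated case (the survival of $u^{2^n}$ and the verification of $z_{j,l}u^{2^n}=z_{j,l+2^{n-j}}$ via the same degree/inequality argument) is more explicit than the paper's two-sentence proof. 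Either approach yields the stated presentations.
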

\begin{proof}
Since $\BPGL$ is a summand of $\MGL_{(2)}$, the differentials take the same form.
The differentials of $\BPGL\langle n \rangle$ are determined by the differentials of $\BPGL$.
\end{proof}

Next we compute the $n$th mod 2 Morava $K$-theory of fields not of characteristic 2 with virtual cohomological dimension $\vcd(F) < 2(2^n-1)$.

\begin{lemma}[\protect{\cite[Lemma 8.11]{Hoyois:fromto}}]
\label{lem:veff-agree}
The effective and the very effective slice filtrations \cite{Bachmann:veff} are the same on
$\MGL$, $\BPGL$, $\BPGL\langle n \rangle$ and $K(n)$.
That is,
$
\f_q(\E) = \vf_q(\E),
$
for $\E$ any of the above spectra.
\end{lemma}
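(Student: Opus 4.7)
The plan is to show that the natural comparison $\vf_q(\E) \to \f_q(\E)$ induced by the inclusion $\SH^{\veff}(F) \subset \SH^{\eff}(F)$ is an equivalence. Since the target already lies in $\Sigma^{2q,q}\SH^{\eff}(F)$ by construction, it suffices to upgrade this: I must verify that $\Sigma^{-2q,-q}\f_q(\E)$ is connective in Morel's homotopy $t$-structure, so that $\f_q(\E)$ lies in $\Sigma^{2q,q}\SH^{\veff}(F)$ and the reverse comparison is then supplied by the universal property of $\vf_q$.

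For $\MGL$ this is precisely \cite[Lemma 8.11]{Hoyois:fromto}, and I would invoke it directly. The underlying idea, which I would sketch, is that each slice satisfies
\[
\Sigma^{-2q,-q}\s_j(\MGL) \;=\; \Sigma^{2(j-q),\,j-q}(\MZ \otimes L_j),
\]
which for $j \geq q$ is a nonnegative bigraded suspension of a very effective spectrum, hence very effective. Using slice completeness of $\MGL$ \cite[Lemma 8.10]{Hoyois:fromto} together with closure of $\SH^{\veff}(F)$ under extensions and filtered homotopy colimits, the conclusion propagates up the slice tower from the slices to $\f_q(\MGL)$.

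For the remaining spectra I would reduce to the $\MGL$ case. The spectrum $\BPGL$ is a retract of $\MGL_{(2)}$ via the Quillen idempotent, so $\f_q(\BPGL)$ is a retract of $\f_q(\MGL_{(2)})$; closure of $\SH^{\veff}(F)$ under retracts then gives the conclusion. The truncation $\BPGL\langle n\rangle$ is built from $\BPGL$ by successive cofibers killing $v_{n+1}, v_{n+2}, \dots$, and $K(n)$ is obtained from $\BPGL/(2, v_i \mid i \neq n)$ by inverting $v_n$. In each case the slices remain of the form $\Sigma^{2j,j}\MZ \otimes P_j$ (or the mod $2$ version) with $P_j$ a free module concentrated in homotopical degree zero, so the same shift analysis shows $\Sigma^{-2q,-q}\s_j(\E)$ is very effective whenever $j \geq q$. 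This is essential for $K(n)$, where slices exist for negative $j$ as well, but only those with $j \geq q$ contribute to $\f_q(\E)$, so the shift $j - q$ remains nonnegative.

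The main obstacle is the passage to the limit in the slice tower: to package the very effective slices into a very effective $\f_q(\E)$ one needs $\E$ to be slice complete, so that the slice filtration actually reassembles $\f_q(\E)$ from its quotients by $\f_j(\E)$ for $j \to \infty$. Slice completeness is classical for $\MGL$ and transfers to $\BPGL$ as a retract, to $\BPGL\langle n\rangle$ as an iterated cofiber of maps between slice complete spectra, and finally to $K(n)$ via the filtered colimit used to invert $v_n$, since the very effective subcategory is closed under the relevant colimits. Once this convergence is in hand, the very effectivity of $\Sigma^{-2q,-q}\f_q(\E)$ follows formally from the slicewise input.
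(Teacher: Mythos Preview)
Your proposal follows the same skeleton as the paper: show that $\f_q(\E)$ already lies in $\Sigma^{2q,q}\SH^{\veff}(F)$ (equivalently, is $q$-connective in Morel's $t$-structure), then conclude $\f_q(\E)=\vf_q(\E)$ by the universal property. The paper's own proof is a single sentence: it invokes \cite[Lemma~8.11]{Hoyois:fromto} for the $q$-connectivity of $\f_q(\E)$ and then appeals to uniqueness of adjoints, without spelling out any slicewise argument or reductions.

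There is one genuine wrinkle in your sketch. You deduce $q$-connectivity of $\f_q(\E)$ from connectivity of its slices together with slice completeness and ``closure of $\SH^{\veff}(F)$ under extensions and filtered homotopy colimits.'' But slice completeness exhibits $\f_q(\E)$ as the \emph{limit} $\lim_j \f_q(\E)/\f_{q+j}(\E)$, not a colimit, and the nonnegative part of a $t$-structure is not closed under limits. The argument does go through, but for a different reason: the fiber of each transition map in that tower is $\s_{q+j}(\E)$, which is $(q+j)$-connective, so the tower stabilizes on each homotopy sheaf and the $\lim^1$ obstruction vanishes. The same confusion recurs when you claim slice completeness for $K(n)$ follows from that for $\BPGL$ ``via the filtered colimit used to invert $v_n$, since the very effective subcategory is closed under the relevant colimits''; slice completeness is a limit condition and does not in general commute with filtered colimits, and closure properties of $\SH^{\veff}$ are irrelevant to it.

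A cleaner route for the derived spectra, which avoids reproving slice completeness each time, is to use that $\f_q$ is an exact functor commuting with filtered colimits (as recorded in \Cref{sec:slice}): from $q$-connectivity of $\f_q(\BPGL)$ one obtains $q$-connectivity of $\f_q$ applied to each cofiber in the tower defining $\BPGL\langle n\rangle$ (the source $\Sigma^{2m,m}(-)$ of each $v_i$ only improves connectivity after applying $\f_q$), and then of the filtered colimit defining $K(n)$, since connectivity passes to filtered colimits.
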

\begin{proof}
By \cite[Lemma 8.11]{Hoyois:fromto}, $\f_q(\E)$ is $q$-connective.
Hence, $\f_q(\E)$ actually lies in $\SH^{\veff}(q)$,
but then it is equal to $\vf_q(\E)$ by uniqueness of adjoints.
\end{proof}

\begin{lemma}
\label{lem:veff-real}
Let $\E$ be a motivic spectrum in $\SH(\C)$ whose complex realization
$R_\C(\E)$ has only cells in even dimensions.
Then the complex realization of the very effective slice filtration of $\E$ is twice the Postnikov filtration of the complex realization of $\E$.
That is,
$
R_\C(\vf_q(\E)) = (R_\C(\E))_{\geq 2q}.
$
Hence complex realization induces a map from the very effective slice spectral sequence of $\E$ to the Atiyah-Hirzebruch spectral sequence of $R_\C(\E)$ \cite[Theorem 3.3]{Maunder}.
\end{lemma}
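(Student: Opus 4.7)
The plan is to factor the realized comparison map $R_\C(\vf_q(\E)) \to R_\C(\E)$ through the $2q$-connective cover and then to prove the factorization is an equivalence by induction on $q$. For the factorization, I would first observe that the very effective subcategory $\SH^{\veff}(\C)$ is generated under homotopy colimits and extensions by suspension spectra of smooth complex varieties, and that these realize under $R_\C$ to connective topological spectra. Combined with $R_\C(S^{2q,q}) = S^{2q}$, this restricts $R_\C$ to a functor $\Sigma^{2q,q}\SH^{\veff}(\C) \to \SH_{\geq 2q}$, giving a canonical factorization $R_\C(\vf_q(\E)) \to R_\C(\E)_{\geq 2q} \to R_\C(\E)$.

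For the equivalence, the base case $q=0$ is immediate since the spectra of interest are themselves very effective (so $\vf_0(\E) \simeq \E$) and $R_\C(\E)$ is connective by the evenness hypothesis. Inductively, assuming $R_\C(\vf_q(\E)) \simeq R_\C(\E)_{\geq 2q}$, I would apply $R_\C$ to the cofiber sequence $\vf_{q+1}(\E) \to \vf_q(\E) \to \vs_q(\E)$ and compare with the Postnikov cofiber sequence $R_\C(\E)_{\geq 2q+2} \to R_\C(\E)_{\geq 2q} \to H\pi_{2q}R_\C(\E)[2q]$. By \Cref{lem:veff-agree} the very effective slices agree with the slices for the spectra at hand, and the slices have the explicit form $\Sigma^{2q,q}\MZ \tensor L_q$ (with analogous formulas for $\BPGL$, $\BPGL\langle n \rangle$ and $K(n)$); their realizations $\Sigma^{2q}\HZ \tensor L_q$ are Eilenberg-MacLane spectra concentrated in degree $2q$, so the induction reduces to identifying $R_\C(\vs_q(\E))$ with $H\pi_{2q}R_\C(\E)[2q]$ compatibly with the boundary maps.

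The main obstacle is this last identification: a priori the realized cofiber sequence could encode a $k$-invariant relating $R_\C(\vs_q(\E))$ to $R_\C(\vf_{q+1}(\E))$ that does not match the Postnikov $k$-invariant of $R_\C(\E)$. This is precisely where the evenness hypothesis is essential, since any such discrepancy would be witnessed by a nonzero odd-degree homotopy class of $R_\C(\E)$, contradicting the hypothesis. Once the inductive equivalence of filtrations is established, the final assertion of the lemma follows formally: the morphism of filtered spectra $\{R_\C(\vf_q(\E))\}_q \to \{R_\C(\E)_{\geq 2q}\}_q$ induces a morphism of the associated exact-couple spectral sequences, and on the topological side the Postnikov filtration yields the Atiyah-Hirzebruch spectral sequence of $R_\C(\E)$ by the classical identification recalled in \cite[Theorem 3.3]{Maunder}.
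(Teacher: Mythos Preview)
Your proposal is correct and, in fact, more carefully argued than the paper's own proof, but it takes a different route. The paper argues in one stroke: by the description $\SH(\C)^{\veff}(q)=T^{\wedge q}\wedge\SH(\C)^{\eff}_{\ge 0}$ and $R_\C(T)\simeq S^2$, realization sends $\SH^{\veff}(q)$ into $\SH_{\ge 2q}$; then, invoking the evenness hypothesis in the form $R_\C(\E)_{\ge 2q}=R_\C(\E)_{\ge 2q+1}$, it declares $R_\C(\vf_q(\E))=(R_\C(\E))_{\ge 2q}$ ``by definition''. No induction, no explicit slices. Your approach instead builds the comparison map (same first step) and then proves it is an equivalence by induction on $q$ via the two cofiber sequences and the five lemma.

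Two remarks on your version. First, the appeal to the explicit slice formulas and to \Cref{lem:veff-agree} is unnecessary and narrows the statement to the particular spectra: once you know $R_\C(\vf_{q+1}(\E))\in\SH_{\ge 2q+2}$ and $R_\C(\vf_q(\E))\simeq R_\C(\E)_{\ge 2q}$ by induction, the long exact sequence already forces $R_\C(\vs_q(\E))$ to have homotopy concentrated in degrees $2q$ and $2q+1$, and the evenness of $R_\C(\E)$ kills degree $2q+1$. Second, your ``main obstacle'' about mismatched $k$-invariants is not an obstacle at all: you have already produced a map of towers $R_\C(\vf_\bullet(\E))\to R_\C(\E)_{\ge 2\bullet}$, hence a map of cofiber sequences, and two-out-of-three finishes the inductive step without any separate compatibility check. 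A genuine small caveat is your base case $\vf_0(\E)\simeq\E$: this holds for $\MGL$, $\BPGL$, $\BPGL\langle n\rangle$, but $K(n)$ is not very effective. You can patch this by running the induction for the connective quotient $\BPGL/(2,v_i\mid i\neq n)$ and then using $v_n$-periodicity, or by observing that the argument for $\vf_q\to\vf_{q-1}$ also lets the induction descend.
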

\begin{proof}
By construction \cite[Section 4]{Bachmann:veff}
\[
\SH(\C)^\veff(q) = T^{\wedge q} \wedge \SH(\C)^\eff_{\geq 0} = 
T^{\wedge q} \wedge \{\E \in \SH(\C)^\eff \vert \underline{\pi}_{i, 0}(\E) = 0, i < 0\},
\]
where $\underline{\pi}_{i,0}(\E)$ is the Nisnevich sheaf associated to the presheaf $X \mapsto [\Sigma^\infty X_+ \wedge S^{i,0}, \E]$.
Since $R_\C(T) \simeq S^2$, we have
$R_\C(\SH(k)^\veff(q)) = \SH_{\geq 2q}$, i.e., the usual Postnikov filtration of $\SH$ at twice the usual speed.
If $R_\C(\E)_{\geq 2q} = R_\C(\E)_{\geq 2q + 1}$,
then $R_\C(\vf_q(\E)) = (R_\C(\E))_{\geq 2q}$ by definition.
\end{proof}
Note that the condition of \Cref{lem:veff-real} is satisfied by the spectra
$\MGL, \BPGL, \BPGL\langle n \rangle$ and $K(n)$.
The homotopy groups of their complex realizations are only nonzero in even degrees.
Hence for each of these spectra we have a well defined map from their slice spectral sequence to the Atiyah-Hirzebruch spectral sequence of their complex realizations.

\begin{lemma}
\label{lem:Kn-diffs}
Over any field $F$ of characteristic not 2 the $d^i$-differential in the slice spectral sequence for $K(n)$ is trivial for $i < r = 2^n-1$ and
$d^{r} = Q_n + \Psi$. Here $Q_n$ is the $n$th motivic Milnor primitive, the dual of $\tau_n$ in the mod 2 dual motivic Steenrod algebra, while $\Psi$ is some element of the motivic Steenrod algebra which acts as zero on $H^\star(F;\Z/2)$.
\end{lemma}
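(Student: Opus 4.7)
The plan is to proceed in two stages: first rule out early differentials by a sparsity argument, then identify $d^{2^n-1}$ as an element of the mod-2 motivic Steenrod algebra through comparison with topology.

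For the vanishing statement, recall that $|v_n| = (2(2^n - 1), 2^n - 1)$, so the slices $\s_q(K(n)) = \Sigma^{2q,q}\MZ/2 \otimes (\Z/2[v_n^{\pm 1}])_q$ vanish unless $q$ is a multiple of $2^n - 1$. Consequently any nonzero differential $d^i \colon E^i_{p,q,w} \to E^i_{p-1,q+i,w}$ requires $i$ to be a positive multiple of $2^n - 1$, which immediately yields $d^i = 0$ for $1 \leq i < 2^n - 1$.

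Next, I would identify $d^{2^n - 1}$ as a stable motivic cohomology operation. Under the identification $E^1_{p,q,w} \cong H^{2q-p,q-w}(F;\Z/2)$, the $v_n$-linearity (coming from the ring structure on $K(n)$, with $v_n$ a permanent cycle) and naturality force $d^{2^n - 1}$ to be represented by an element of the mod-2 motivic Steenrod algebra $\AAA^\star$ in bidegree $(2^{n+1} - 1, 2^n - 1)$, which is precisely the bidegree of $Q_n$. By Voevodsky's description of $\AAA^\star$, this bidegree is spanned by $Q_n$ together with operations divisible by $\rho$. To single out $Q_n$ as the principal term, I would invoke \Cref{lem:veff-real}: complex realization maps the slice spectral sequence of $K(n)$ to the Atiyah-Hirzebruch spectral sequence of topological mod-2 Morava $K$-theory $K(n)^{\text{top}}$, whose first nonzero differential is the classical Milnor primitive $Q_n^{\text{top}}$. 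Since the motivic $Q_n$ realizes to $Q_n^{\text{top}}$ while $\rho$ realizes to zero, comparison forces $d^{2^n - 1} = Q_n + \Psi$, with $\Psi$ a $\rho$-divisible element of $\AAA^{2^{n+1}-1, 2^n - 1}$.

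The main obstacle is to verify that $\Psi$ acts trivially on $H^\star(F;\Z/2)$. The plan is to write $\Psi = \rho \cdot \Psi'$ with $\Psi' \in \AAA^{2^{n+1} - 2, 2^n - 2}$, and then use the structural description of the $\AAA^\star$-action on motivic cohomology of a field --- controlled by Milnor $K$-theory together with $\rho$ and $\tau$ --- to check that such a $\rho$-multiple annihilates $H^\star(F;\Z/2)$. A cleaner alternative is to base-change to the algebraic closure $\bar{F}$, where $\rho = 0$ forces the differential to equal $Q_n$ exactly, and then to descend back to $F$ by naturality, observing that any correction term must be $\rho$-divisible and hence acts indistinguishably from $0$ on the motivic cohomology classes that actually appear in the $E^1$-page.
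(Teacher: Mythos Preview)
Your sparsity argument for $d^i=0$ when $i<2^n-1$ and the use of complex realization to pin down the $Q_n$-coefficient match the paper. One small inaccuracy: $\AAA^{2^{n+1}-1,2^n-1}$ is not spanned by $Q_n$ and $\rho$-multiples alone---there are also operations such as the dual of $\tau_0\xi_n$ (what the paper writes as $Q_0\hat f$) which are not $\rho$-divisible. Complex realization still forces their coefficient to vanish since the topological differential is exactly $Q_n^{\mathrm{top}}$, so this is easily repaired.

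The substantive gap is your argument that $\Psi$ acts trivially on $H^\star(F;\Z/2)$. Plan~B fails outright: a $\rho$-divisible operation need not act as zero---already $\rho\cdot\id$ acts as multiplication by $\rho$, which is nonzero on $H^\star(\R;\Z/2)$. Base-changing to $\bar F$ only shows that $\Psi$ lies in the kernel of $\AAA^\star(F)\to\AAA^\star(\bar F)$, i.e.\ has coefficients in $k_{>0}(F)$; since $H^\star(F;\Z/2)\to H^\star(\bar F;\Z/2)$ is far from injective, this says nothing about the action of $\Psi$ on $H^\star(F;\Z/2)$. Plan~A offers no concrete mechanism either.

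The paper closes this gap by a different route. Since the differential is $k_*(F)$-linear it suffices to show $\Psi(\tau^m)=0$ for all $m\ge 0$. For $m<2^n$ the target $H^{2^{n+1}-1,\,m+2^n-1}$ vanishes for degree reasons. For $m=2^n$ the paper invokes the map $\BPGL\to K(n)$: the already-computed $\BPGL$ differential (\Cref{thm:diffs}, \Cref{thm:BPGL}) forces $d^{2^n-1}(\tau^{2^n})=\rho^{2^{n+1}-1}$, which equals $Q_n(\tau^{2^n})$ by \Cref{cor:Qn-action}, hence $\Psi(\tau^{2^n})=0$. The remaining powers are handled by writing $\tau^m=\tau^{u2^n}\cdot\tau^k$ with $k<2^n$ and applying the Leibniz rule supplied by the $\BPGL$-module structure on $K(n)$. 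Without this $\BPGL$ comparison and the module-Leibniz step, the triviality of $\Psi$ cannot be established.
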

\begin{proof}
Since the distance between the nonzero slices are $2^n-1$ the first possibly nonzero differential is $d^{2^n-1}$
which is an element of $\AAA^{2^{n+1}-1,2^n-1}$.
Elements in this bidegree are of the form
\[
aQ_n + bQ_0\hat{f} + \phi \Phi,
\]
where $a, b \in \Z/2$, $\phi \in H^{1,1}(\R;\Z/2) = \Z/2\{\rho\}$, $\Phi \in \AAA^\star$, $f$ is a polynomial in the dual motivic Steenrod algebra and $\hat{f}$ its dual.

By complex realization $a = 1$ and $b = 0$ by \Cref{lem:veff-real}, \Cref{lem:veff-agree} and \cite[Lemma 2.1]{Yagita:Morava} (note that  \cite[Lemma 2.1]{Yagita:Morava} is only stated for odd primes, but because of the (noncommutative) ring structure on $K(n)$, cf.~\cite[p.~1756]{Nassau}, Yagita's proof works for $p=2$). %
It remains to show that $\Psi = \phi \Phi$ acts as zero on $H^\star(F;\Z/2)$.
Using \Cref{thm:BPGL}, the map $\BPGL \to K(n)$ implies that $d^{2^n-1}(\tau^{2^n}) = \rho^{2^{n+1}-1} = Q_n(\tau^{2^n})$, where the last equality is by \Cref{cor:Qn-action}. Note that \Cref{thm:BPGL} is only stated for $\R$, but by base change from $\Q$ the differential takes the same form over any field of characteristic 0, since $H^{p,q}(\R;\Z/2) \cong H^{p,q}(\Q;\Z/2)$ when $p > 2$. For fields of positive characteristic there is nothing to show, since then $\rho^3 = 0$.
More generally we get that $d^{2^n-1}(\tau^{u2^n}) = Q_n(\tau^{u2^n})$, so $\Psi(\tau^{u2^n}) = 0$.
Since $\Psi$ has bidegree $(2^{n+1}-1, 2^n-1)$ we have $\Psi(\tau^k) = 0$, since $H^{2^{n+1}-1, 2^n-1 + k} = 0$ when $k < 2^n$.
Next we use the $\BPGL$-module structure $\BPGL \wedge K(n) \to K(n)$ and the Leibniz rule to conclude that
$\Psi(\tau^{k + u2^n}) = 0$ for $0 \leq k < 2^n$ and all $u$.
Indeed,
\begin{align*}
Q_n(\tau^{k + u2^n}) + \Psi(\tau^{k + u 2^n})
&=  d^{2^n-1}(\tau^{u2^n}\wedge \tau^k)
= d^{2^n-1}(\tau^{u2^n})\wedge \tau^k + \tau^{u2^n} \wedge d^{2^n-1}(\tau^{k}) \\
&= Q_n(\tau^{u2^n})\wedge \tau^k + 0.
\end{align*}
\end{proof}

\begin{remark}
The condition $(d^{2^n-1})^2 = 0$ is not sufficient to determine if $\phi = 0$.
Indeed, for $n=2$ we could have $d^3 = Q_2 + \rho Q_0 Q_1\Sq^2$.
One approach to decide if $d^{2^n-1} = Q_n$ would be to compute the motivic Morava $K$-theory of products of the motivic classifying space $B\mu_2$.
\end{remark}

Over the real numbers the next result have been obtained in \cite[p.~10]{Hu-Kriz:remarks} and \cite[Theorem 6.4]{Yagita:atiyah}.
\begin{theorem}
\label{lem:Kn-coeff}
\label{thm:Kn}
The associated graded of the motivic homotopy groups of $K(n)$ over a field $F$ of characteristic not 2 with virtual cohomological dimension $\vcd(F) < 2(2^n-1)$ is
\begin{align*}
E^\infty(K_\star(n)) \cong &
\bigoplus_{i\geq0, {i \choose 2^n} \equiv 0}k_*(F)[v_n^{\pm 1}]/\rho^{2^{n+1}-1}\{\tau^i\}\\
&\oplus
\bigoplus_{i\geq0, {i \choose 2^n} \equiv 1}\ker(\rho^{2^{n+1}-1} : k_*(F)[v_n^{\pm 1}] \to k_*(F)[v_n^{\pm 1}])\{\tau^i \}.
\end{align*}
Here $\rho_{} \in E^\infty_{-1,0,-1}(K(n)), \tau \in E^\infty_{0,0,-1}(K(n)), v_n \in E^\infty_{2(2^{n}-1), 2^n-1, 2^{n}-1}(K(n))$.
When $p - 2w < 0$, $K_{p,w}(n) = 0$.
See \Cref{fig:K(n)} for a picture of $K_{\star}(n)(\R)$.
\end{theorem}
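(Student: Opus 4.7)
The plan is to analyze the slice spectral sequence for $K(n)$. Since the slices are $\s_q(K(n)) = \Sigma^{2q,q}\MZ/2 \tensor (\Z/2[v_n^{\pm 1}])_q$, which vanish unless $(2^n-1)\mid q$, the $E^1$-page is
\[
E^1 \;\cong\; H^{*,*}(F;\Z/2)[v_n^{\pm 1}] \;\cong\; k_*(F)[\tau][v_n^{\pm 1}],
\]
and only differentials $d^r$ with $(2^n-1)\mid r$ can be nonzero. The vanishing $K_{p,w}(n)=0$ for $p<2w$ then follows because no slice contributes in this range.

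By \Cref{lem:Kn-diffs}, the first potentially nontrivial differential $d^{2^n-1}$ agrees with $Q_n$ on the motivic cohomology factor (the correction $\Psi$ acts trivially on $H^\star(F;\Z/2)$). Multiplicativity makes $d^{2^n-1}$ a derivation on $E^{2^n-1}$. Since $Q_n$ raises bidegree by $(2^{n+1}-1,2^n-1)$ and $k_*(F)$ lies on the diagonal $p=q$, $Q_n$ annihilates $k_*(F)$ by vanishing of motivic cohomology above the diagonal; similarly $v_n^{\pm 1}$ is killed for tridegree reasons. The action on powers of $\tau$ is given by \Cref{cor:Qn-action},
\[
Q_n(\tau^i) = \binom{i}{2^n}\,\rho^{2^{n+1}-1}\,\tau^{i-2^n}.
\]

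To pass to $E^{2^n}$ I would invoke Lucas's theorem: $\binom{i}{2^n}\bmod 2$ equals the $n$-th binary digit of $i$, and the parities of $\binom{i}{2^n}$ and $\binom{i+2^n}{2^n}$ are opposite. Decomposing the $E^{2^n-1}$-page by $\tau$-exponent and computing $Q_n$-cohomology summand by summand produces the direct sum in the theorem: for $\binom{i}{2^n}$ even, the summand $k_*(F)[v_n^{\pm 1}]\cdot\tau^i$ consists of cycles, and the boundaries coming from $\tau^{i+2^n}$ (with odd binomial coefficient) are exactly $\rho^{2^{n+1}-1}k_*(F)[v_n^{\pm 1}]\cdot\tau^i$, giving the first summand; for $\binom{i}{2^n}$ odd, the cycles in $k_*(F)[v_n^{\pm 1}]\cdot\tau^i$ are precisely $\ker(\rho^{2^{n+1}-1})\cdot\tau^i$, and no boundaries land here.

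The remaining and hardest step is to rule out the higher differentials $d^{k(2^n-1)}$ for $k\geq 2$ and conclude $E^\infty = E^{2^n}$. My plan is to compare with the topological Atiyah--Hirzebruch spectral sequence of $K(n)_{top}$ via complex realization and \Cref{lem:veff-real}, where it is classical that $Q_n$ is the only nontrivial differential. The hypothesis $\vcd(F)<2(2^n-1)$ is exactly what is needed to transfer this topological collapse to the motivic setting: together with the Beilinson--Lichtenbaum description of mod $2$ motivic cohomology and the $\rho$-periodicity of $k_*(F)$ in degrees at least $\vcd(F)$, it forces the target of any would-be nonzero $d^{k(2^n-1)}$ with $k\geq 2$ into cohomological bidegrees where the group vanishes. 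Making this last step rigorous is the main obstacle.
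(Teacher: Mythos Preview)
Your computation of the $E^{2^n}$-page is essentially identical to the paper's: same $E^1$-description, same appeal to \Cref{lem:Kn-diffs} and \Cref{cor:Qn-action}, same $k_*(F)$- and $v_n$-linearity to reduce to the action on powers of~$\tau$, same Lucas-type splitting into cycles and boundaries.

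Where you diverge is in the collapse argument, and here the detour through complex realization is a red herring. Comparing with the topological Atiyah--Hirzebruch spectral sequence tells you nothing about higher differentials over a general field $F$: realization only sees the part of $H^{*,*}(F;\Z/2)$ that survives to $H^*(\mathrm{pt};\Z/2)$, and the motivic differentials you are worried about involve precisely the extra $\rho$- and $\tau$-structure that realization collapses. The paper does not use realization here at all.

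What the paper does is exactly the degree argument you allude to at the end, and it is not hard to make rigorous. Suppose $d^r$ with $r\geq 2(2^n-1)$ has nonzero source $E^r_{p,q,w}$; then $2q-p\geq 0$. Its target lies in Milnor $K$-degree
\[
m \;=\; 2(q+r)-(p-1) \;=\; (2q-p)+2r+1 \;\geq\; 4(2^n-1)+1.
\]
On the $E^{2^n}$-page the target is either a $\ker(\rho^{2^{n+1}-1})$-summand or a $k_m/\rho^{2^{n+1}-1}$-summand. Since $\vcd(F)<2(2^n-1)$, multiplication by $\rho$ is an isomorphism on $k_*(F)$ in degrees $\geq\vcd(F)$; hence the kernel vanishes for $m\geq\vcd(F)$, and the quotient vanishes once $m-(2^{n+1}-1)\geq\vcd(F)$, i.e.\ $m\geq\vcd(F)+2^{n+1}-1$. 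Both bounds are below $4(2^n-1)+1$ under the hypothesis, so the target is zero and $E^{2^n}=E^\infty$. The paper phrases this as ``the earlier $d^{2^n-1}$-differential into the target is an isomorphism'' (coming from a source already in degree $>\vcd(F)$), which is the same observation viewed from the previous page.
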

\begin{proof}
As a module over mod $2$ Milnor $K$-theory $k_*(F)$ we have
\[
E^1(K(n)) = \pi_{*,*}\s_*(K(n)) = \oplus_{i\geq0}k_*(F)[v_n^{\pm 1}]\{\tau^i\}.
\]
The differentials are $k_*(F)$- and $v_n$-linear,
the module structure being induced by the $\S$-module and $\BPGL$-module structure on $K(n)$.
By \Cref{lem:Kn-diffs} we may assume $d^{2^n-1} = Q_n$, so \Cref{cor:Qn-action} implies that the $E^{2^n}$-page is
\begin{align*}
E^{2^n}(K(n)) =& 
\bigoplus_{i\geq0, {i \choose 2^n} \equiv 0}k_*(F)[v_n^{\pm 1}]/\rho^{2^{n+1}-1}\{\tau^i\}\\
&\oplus
\bigoplus_{i\geq0, {i \choose 2^n} \equiv 1}\ker\left(\rho^{2^{n+1}-1} : k_*(F)[v_n^{\pm 1}] \to k_*(F)[v_n^{\pm 1}]\right)\{\tau^i \}.
\end{align*}
Over a field $F$ with $\vcd(F) < 2(2^n-1)$ the spectral sequence collapses at the $E^{2^n}$-page for degree reasons,
i.e., $E^{2^n}(K(n)) = E^\infty(K(n))$.
Indeed,
the $d^r$-differential is a map
$d^r : E^{r}_{p,q,w}  \to E^{r}_{p-1,q+r,w}$,
where $E^r_{p,q,w}$ is a subquotient of $H^{2q-p,q-w}(F;\Z/2) \tensor (\Z[v_n^{\pm}])_q$.
At an earlier stage the group $E^{r'}_{p-1,q+r,w}$ could be the target of a nonzero $d^{r'}$-differential from $E^{r'}_{p,q+r-r',w}$.
If $2(q+r-r') - p = 2q - p + 2(r-r') \geq 0 + 2(r - r') \geq 2(2^n-1) > \vcd(F)$ then the nonzero $d^{r'}$-differential is an isomorphism, hence the target of the $d^r$-differential is zero (note that $r$ is necessarily a multiple of $2^n-1$ and $r'=2^n-1$). Hence there are no nonzero $d^r$-differentials for $r > 2^n-1$.
\end{proof}
\begin{remark}
For $K(1) = \KGL/2$ a finer analysis can be carried out over number fields, cf.~\cite{Rognes-Weibel}, \cite[Section 4]{KRO}.
\end{remark}
\begin{remark}
\label{rmk:Kn-exts}
Over the real numbers we are unable to determine the additive extension
\[
E^\infty_{0,*,-(2^n-1)} = \Z/2\{\tau^{2^n-1}, \rho^{2^{n+1}-2}v_n \}.
\]
Based on the $n=0,1$ cases, we may guess that multiplication by $2$ on $K(n)$ induces the map $v_n\rho^nQ_0Q_1\dots Q_{n-1}$ on slices, which would detect that the extension is nonsplit.
\end{remark}

\begin{figure}
  \centering

\begin{tikzpicture}[font=\tiny,scale=0.4]
        \draw[help lines,xstep=1.0,ystep=1.0] (-0.5,0.0) grid (20.5,20.5);

{\draw[fill] (0.0,8.0) circle (2pt);}
{\draw[fill] (0.0,7.0) circle (2pt);}
{\draw (0.0,8.0) -- (0.0,7.0);}
{\draw[fill] (0.0,6.0) circle (2pt);}
{\draw (0.0,7.0) -- (0.0,6.0);}
{\draw[fill] (0.0,5.0) circle (2pt);}
{\draw (0.0,6.0) -- (0.0,5.0);}
{\draw[fill] (0.0,0.0) circle (2pt);}
{\draw[fill] (6.0,11.0) circle (2pt);}
{\draw[fill] (6.0,10.0) circle (2pt);}
{\draw (6.0,11.0) -- (6.0,10.0);}
{\draw[fill] (6.0,9.0) circle (2pt);}
{\draw (6.0,10.0) -- (6.0,9.0);}
{\draw[fill] (6.0,8.0) circle (2pt);}
{\draw (6.0,9.0) -- (6.0,8.0);}
{\draw[fill] (5.0,10.0) circle (2pt);}
{\draw (6.0,11.0) -- (5.0,10.0);}
{\draw[fill] (5.0,9.0) circle (2pt);}
{\draw (5.0,10.0) -- (5.0,9.0);}
{\draw (6.0,10.0) -- (5.0,9.0);}
{\draw[fill] (5.0,8.0) circle (2pt);}
{\draw (5.0,9.0) -- (5.0,8.0);}
{\draw (6.0,9.0) -- (5.0,8.0);}
{\draw[fill] (5.0,7.0) circle (2pt);}
{\draw (5.0,8.0) -- (5.0,7.0);}
{\draw (6.0,8.0) -- (5.0,7.0);}
{\draw[fill] (4.0,9.0) circle (2pt);}
{\draw (5.0,10.0) -- (4.0,9.0);}
{\draw[fill] (4.0,8.0) circle (2pt);}
{\draw (4.0,9.0) -- (4.0,8.0);}
{\draw (5.0,9.0) -- (4.0,8.0);}
{\draw[fill] (4.0,7.0) circle (2pt);}
{\draw (4.0,8.0) -- (4.0,7.0);}
{\draw (5.0,8.0) -- (4.0,7.0);}
{\draw[fill] (4.0,6.0) circle (2pt);}
{\draw (4.0,7.0) -- (4.0,6.0);}
{\draw (5.0,7.0) -- (4.0,6.0);}
{\draw[fill] (3.0,8.0) circle (2pt);}
{\draw (4.0,9.0) -- (3.0,8.0);}
{\draw[fill] (3.0,7.0) circle (2pt);}
{\draw (3.0,8.0) -- (3.0,7.0);}
{\draw (4.0,8.0) -- (3.0,7.0);}
{\draw[fill] (3.0,6.0) circle (2pt);}
{\draw (3.0,7.0) -- (3.0,6.0);}
{\draw (4.0,7.0) -- (3.0,6.0);}
{\draw[fill] (3.0,5.0) circle (2pt);}
{\draw (3.0,6.0) -- (3.0,5.0);}
{\draw (4.0,6.0) -- (3.0,5.0);}
{\draw[fill] (2.0,7.0) circle (2pt);}
{\draw (3.0,8.0) -- (2.0,7.0);}
{\draw[fill] (2.0,6.0) circle (2pt);}
{\draw (2.0,7.0) -- (2.0,6.0);}
{\draw (3.0,7.0) -- (2.0,6.0);}
{\draw[fill] (2.0,5.0) circle (2pt);}
{\draw (2.0,6.0) -- (2.0,5.0);}
{\draw (3.0,6.0) -- (2.0,5.0);}
{\draw[fill] (2.0,4.0) circle (2pt);}
{\draw (2.0,5.0) -- (2.0,4.0);}
{\draw (3.0,5.0) -- (2.0,4.0);}
{\draw[fill] (1.0,6.0) circle (2pt);}
{\draw (2.0,7.0) -- (1.0,6.0);}
{\draw[fill] (1.0,5.0) circle (2pt);}
{\draw (1.0,6.0) -- (1.0,5.0);}
{\draw (2.0,6.0) -- (1.0,5.0);}
{\draw[fill] (1.0,4.0) circle (2pt);}
{\draw (1.0,5.0) -- (1.0,4.0);}
{\draw (2.0,5.0) -- (1.0,4.0);}
{\draw[fill] (1.0,3.0) circle (2pt);}
{\draw (1.0,4.0) -- (1.0,3.0);}
{\draw (2.0,4.0) -- (1.0,3.0);}
{\draw[fill] (0.0,5.0) circle (2pt);}
{\draw (1.0,6.0) -- (0.0,5.0);}
{\draw[fill] (0.0,4.0) circle (2pt);}
{\draw (0.0,5.0) -- (0.0,4.0);}
{\draw (1.0,5.0) -- (0.0,4.0);}
{\draw[fill] (0.0,3.0) circle (2pt);}
{\draw (0.0,4.0) -- (0.0,3.0);}
{\draw (1.0,4.0) -- (0.0,3.0);}
{\draw[fill] (0.0,2.0) circle (2pt);}
{\draw (0.0,3.0) -- (0.0,2.0);}
{\draw (1.0,3.0) -- (0.0,2.0);}
{\draw[fill] (6.0,3.0) circle (2pt);}
{\draw[fill] (6.0,2.0) circle (2pt);}
{\draw (6.0,3.0) -- (6.0,2.0);}
{\draw[fill] (6.0,1.0) circle (2pt);}
{\draw (6.0,2.0) -- (6.0,1.0);}
{\draw[fill] (6.0,0.0) circle (2pt);}
{\draw (6.0,1.0) -- (6.0,0.0);}
{\draw[fill] (5.0,2.0) circle (2pt);}
{\draw (6.0,3.0) -- (5.0,2.0);}
{\draw[fill] (5.0,1.0) circle (2pt);}
{\draw (5.0,2.0) -- (5.0,1.0);}
{\draw (6.0,2.0) -- (5.0,1.0);}
{\draw[fill] (5.0,0.0) circle (2pt);}
{\draw (5.0,1.0) -- (5.0,0.0);}
{\draw (6.0,1.0) -- (5.0,0.0);}
{\draw[fill] (4.0,1.0) circle (2pt);}
{\draw (5.0,2.0) -- (4.0,1.0);}
{\draw[fill] (4.0,0.0) circle (2pt);}
{\draw (4.0,1.0) -- (4.0,0.0);}
{\draw (5.0,1.0) -- (4.0,0.0);}
{\draw[fill] (3.0,0.0) circle (2pt);}
{\draw (4.0,1.0) -- (3.0,0.0);}
{\draw[fill] (12.0,14.0) circle (2pt);}
{\draw[fill] (12.0,13.0) circle (2pt);}
{\draw (12.0,14.0) -- (12.0,13.0);}
{\draw[fill] (12.0,12.0) circle (2pt);}
{\draw (12.0,13.0) -- (12.0,12.0);}
{\draw[fill] (12.0,11.0) circle (2pt);}
{\draw (12.0,12.0) -- (12.0,11.0);}
{\draw[fill] (11.0,13.0) circle (2pt);}
{\draw (12.0,14.0) -- (11.0,13.0);}
{\draw[fill] (11.0,12.0) circle (2pt);}
{\draw (11.0,13.0) -- (11.0,12.0);}
{\draw (12.0,13.0) -- (11.0,12.0);}
{\draw[fill] (11.0,11.0) circle (2pt);}
{\draw (11.0,12.0) -- (11.0,11.0);}
{\draw (12.0,12.0) -- (11.0,11.0);}
{\draw[fill] (11.0,10.0) circle (2pt);}
{\draw (11.0,11.0) -- (11.0,10.0);}
{\draw (12.0,11.0) -- (11.0,10.0);}
{\draw[fill] (10.0,12.0) circle (2pt);}
{\draw (11.0,13.0) -- (10.0,12.0);}
{\draw[fill] (10.0,11.0) circle (2pt);}
{\draw (10.0,12.0) -- (10.0,11.0);}
{\draw (11.0,12.0) -- (10.0,11.0);}
{\draw[fill] (10.0,10.0) circle (2pt);}
{\draw (10.0,11.0) -- (10.0,10.0);}
{\draw (11.0,11.0) -- (10.0,10.0);}
{\draw[fill] (10.0,9.0) circle (2pt);}
{\draw (10.0,10.0) -- (10.0,9.0);}
{\draw (11.0,10.0) -- (10.0,9.0);}
{\draw[fill] (9.0,11.0) circle (2pt);}
{\draw (10.0,12.0) -- (9.0,11.0);}
{\draw[fill] (9.0,10.0) circle (2pt);}
{\draw (9.0,11.0) -- (9.0,10.0);}
{\draw (10.0,11.0) -- (9.0,10.0);}
{\draw[fill] (9.0,9.0) circle (2pt);}
{\draw (9.0,10.0) -- (9.0,9.0);}
{\draw (10.0,10.0) -- (9.0,9.0);}
{\draw[fill] (9.0,8.0) circle (2pt);}
{\draw (9.0,9.0) -- (9.0,8.0);}
{\draw (10.0,9.0) -- (9.0,8.0);}
{\draw[fill] (8.0,10.0) circle (2pt);}
{\draw (9.0,11.0) -- (8.0,10.0);}
{\draw[fill] (8.0,9.0) circle (2pt);}
{\draw (8.0,10.0) -- (8.0,9.0);}
{\draw (9.0,10.0) -- (8.0,9.0);}
{\draw[fill] (8.0,8.0) circle (2pt);}
{\draw (8.0,9.0) -- (8.0,8.0);}
{\draw (9.0,9.0) -- (8.0,8.0);}
{\draw[fill] (8.0,7.0) circle (2pt);}
{\draw (8.0,8.0) -- (8.0,7.0);}
{\draw (9.0,8.0) -- (8.0,7.0);}
{\draw[fill] (7.0,9.0) circle (2pt);}
{\draw (8.0,10.0) -- (7.0,9.0);}
{\draw[fill] (7.0,8.0) circle (2pt);}
{\draw (7.0,9.0) -- (7.0,8.0);}
{\draw (8.0,9.0) -- (7.0,8.0);}
{\draw[fill] (7.0,7.0) circle (2pt);}
{\draw (7.0,8.0) -- (7.0,7.0);}
{\draw (8.0,8.0) -- (7.0,7.0);}
{\draw[fill] (7.0,6.0) circle (2pt);}
{\draw (7.0,7.0) -- (7.0,6.0);}
{\draw (8.0,7.0) -- (7.0,6.0);}
{\draw[fill] (6.0,8.0) circle (2pt);}
{\draw (7.0,9.0) -- (6.0,8.0);}
{\draw[fill] (6.0,7.0) circle (2pt);}
{\draw (6.0,8.0) -- (6.0,7.0);}
{\draw (7.0,8.0) -- (6.0,7.0);}
{\draw[fill] (6.0,6.0) circle (2pt);}
{\draw (6.0,7.0) -- (6.0,6.0);}
{\draw (7.0,7.0) -- (6.0,6.0);}
{\draw[fill] (6.0,5.0) circle (2pt);}
{\draw (6.0,6.0) -- (6.0,5.0);}
{\draw (7.0,6.0) -- (6.0,5.0);}
{\draw[fill] (12.0,6.0) circle (2pt);}
{\draw[fill] (12.0,5.0) circle (2pt);}
{\draw (12.0,6.0) -- (12.0,5.0);}
{\draw[fill] (12.0,4.0) circle (2pt);}
{\draw (12.0,5.0) -- (12.0,4.0);}
{\draw[fill] (12.0,3.0) circle (2pt);}
{\draw (12.0,4.0) -- (12.0,3.0);}
{\draw[fill] (11.0,5.0) circle (2pt);}
{\draw (12.0,6.0) -- (11.0,5.0);}
{\draw[fill] (11.0,4.0) circle (2pt);}
{\draw (11.0,5.0) -- (11.0,4.0);}
{\draw (12.0,5.0) -- (11.0,4.0);}
{\draw[fill] (11.0,3.0) circle (2pt);}
{\draw (11.0,4.0) -- (11.0,3.0);}
{\draw (12.0,4.0) -- (11.0,3.0);}
{\draw[fill] (11.0,2.0) circle (2pt);}
{\draw (11.0,3.0) -- (11.0,2.0);}
{\draw (12.0,3.0) -- (11.0,2.0);}
{\draw[fill] (10.0,4.0) circle (2pt);}
{\draw (11.0,5.0) -- (10.0,4.0);}
{\draw[fill] (10.0,3.0) circle (2pt);}
{\draw (10.0,4.0) -- (10.0,3.0);}
{\draw (11.0,4.0) -- (10.0,3.0);}
{\draw[fill] (10.0,2.0) circle (2pt);}
{\draw (10.0,3.0) -- (10.0,2.0);}
{\draw (11.0,3.0) -- (10.0,2.0);}
{\draw[fill] (10.0,1.0) circle (2pt);}
{\draw (10.0,2.0) -- (10.0,1.0);}
{\draw (11.0,2.0) -- (10.0,1.0);}
{\draw[fill] (9.0,3.0) circle (2pt);}
{\draw (10.0,4.0) -- (9.0,3.0);}
{\draw[fill] (9.0,2.0) circle (2pt);}
{\draw (9.0,3.0) -- (9.0,2.0);}
{\draw (10.0,3.0) -- (9.0,2.0);}
{\draw[fill] (9.0,1.0) circle (2pt);}
{\draw (9.0,2.0) -- (9.0,1.0);}
{\draw (10.0,2.0) -- (9.0,1.0);}
{\draw[fill] (9.0,0.0) circle (2pt);}
{\draw (9.0,1.0) -- (9.0,0.0);}
{\draw (10.0,1.0) -- (9.0,0.0);}
{\draw[fill] (8.0,2.0) circle (2pt);}
{\draw (9.0,3.0) -- (8.0,2.0);}
{\draw[fill] (8.0,1.0) circle (2pt);}
{\draw (8.0,2.0) -- (8.0,1.0);}
{\draw (9.0,2.0) -- (8.0,1.0);}
{\draw[fill] (8.0,0.0) circle (2pt);}
{\draw (8.0,1.0) -- (8.0,0.0);}
{\draw (9.0,1.0) -- (8.0,0.0);}
{\draw[fill] (7.0,1.0) circle (2pt);}
{\draw (8.0,2.0) -- (7.0,1.0);}
{\draw[fill] (7.0,0.0) circle (2pt);}
{\draw (7.0,1.0) -- (7.0,0.0);}
{\draw (8.0,1.0) -- (7.0,0.0);}
{\draw[fill] (6.0,0.0) circle (2pt);}
{\draw (7.0,1.0) -- (6.0,0.0);}
{\draw[fill] (18.0,17.0) circle (2pt);}
{\draw[fill] (18.0,16.0) circle (2pt);}
{\draw (18.0,17.0) -- (18.0,16.0);}
{\draw[fill] (18.0,15.0) circle (2pt);}
{\draw (18.0,16.0) -- (18.0,15.0);}
{\draw[fill] (18.0,14.0) circle (2pt);}
{\draw (18.0,15.0) -- (18.0,14.0);}
{\draw[fill] (17.0,16.0) circle (2pt);}
{\draw (18.0,17.0) -- (17.0,16.0);}
{\draw[fill] (17.0,15.0) circle (2pt);}
{\draw (17.0,16.0) -- (17.0,15.0);}
{\draw (18.0,16.0) -- (17.0,15.0);}
{\draw[fill] (17.0,14.0) circle (2pt);}
{\draw (17.0,15.0) -- (17.0,14.0);}
{\draw (18.0,15.0) -- (17.0,14.0);}
{\draw[fill] (17.0,13.0) circle (2pt);}
{\draw (17.0,14.0) -- (17.0,13.0);}
{\draw (18.0,14.0) -- (17.0,13.0);}
{\draw[fill] (16.0,15.0) circle (2pt);}
{\draw (17.0,16.0) -- (16.0,15.0);}
{\draw[fill] (16.0,14.0) circle (2pt);}
{\draw (16.0,15.0) -- (16.0,14.0);}
{\draw (17.0,15.0) -- (16.0,14.0);}
{\draw[fill] (16.0,13.0) circle (2pt);}
{\draw (16.0,14.0) -- (16.0,13.0);}
{\draw (17.0,14.0) -- (16.0,13.0);}
{\draw[fill] (16.0,12.0) circle (2pt);}
{\draw (16.0,13.0) -- (16.0,12.0);}
{\draw (17.0,13.0) -- (16.0,12.0);}
{\draw[fill] (15.0,14.0) circle (2pt);}
{\draw (16.0,15.0) -- (15.0,14.0);}
{\draw[fill] (15.0,13.0) circle (2pt);}
{\draw (15.0,14.0) -- (15.0,13.0);}
{\draw (16.0,14.0) -- (15.0,13.0);}
{\draw[fill] (15.0,12.0) circle (2pt);}
{\draw (15.0,13.0) -- (15.0,12.0);}
{\draw (16.0,13.0) -- (15.0,12.0);}
{\draw[fill] (15.0,11.0) circle (2pt);}
{\draw (15.0,12.0) -- (15.0,11.0);}
{\draw (16.0,12.0) -- (15.0,11.0);}
{\draw[fill] (14.0,13.0) circle (2pt);}
{\draw (15.0,14.0) -- (14.0,13.0);}
{\draw[fill] (14.0,12.0) circle (2pt);}
{\draw (14.0,13.0) -- (14.0,12.0);}
{\draw (15.0,13.0) -- (14.0,12.0);}
{\draw[fill] (14.0,11.0) circle (2pt);}
{\draw (14.0,12.0) -- (14.0,11.0);}
{\draw (15.0,12.0) -- (14.0,11.0);}
{\draw[fill] (14.0,10.0) circle (2pt);}
{\draw (14.0,11.0) -- (14.0,10.0);}
{\draw (15.0,11.0) -- (14.0,10.0);}
{\draw[fill] (13.0,12.0) circle (2pt);}
{\draw (14.0,13.0) -- (13.0,12.0);}
{\draw[fill] (13.0,11.0) circle (2pt);}
{\draw (13.0,12.0) -- (13.0,11.0);}
{\draw (14.0,12.0) -- (13.0,11.0);}
{\draw[fill] (13.0,10.0) circle (2pt);}
{\draw (13.0,11.0) -- (13.0,10.0);}
{\draw (14.0,11.0) -- (13.0,10.0);}
{\draw[fill] (13.0,9.0) circle (2pt);}
{\draw (13.0,10.0) -- (13.0,9.0);}
{\draw (14.0,10.0) -- (13.0,9.0);}
{\draw[fill] (12.0,11.0) circle (2pt);}
{\draw (13.0,12.0) -- (12.0,11.0);}
{\draw[fill] (12.0,10.0) circle (2pt);}
{\draw (12.0,11.0) -- (12.0,10.0);}
{\draw (13.0,11.0) -- (12.0,10.0);}
{\draw[fill] (12.0,9.0) circle (2pt);}
{\draw (12.0,10.0) -- (12.0,9.0);}
{\draw (13.0,10.0) -- (12.0,9.0);}
{\draw[fill] (12.0,8.0) circle (2pt);}
{\draw (12.0,9.0) -- (12.0,8.0);}
{\draw (13.0,9.0) -- (12.0,8.0);}
{\draw[fill] (18.0,9.0) circle (2pt);}
{\draw[fill] (18.0,8.0) circle (2pt);}
{\draw (18.0,9.0) -- (18.0,8.0);}
{\draw[fill] (18.0,7.0) circle (2pt);}
{\draw (18.0,8.0) -- (18.0,7.0);}
{\draw[fill] (18.0,6.0) circle (2pt);}
{\draw (18.0,7.0) -- (18.0,6.0);}
{\draw[fill] (17.0,8.0) circle (2pt);}
{\draw (18.0,9.0) -- (17.0,8.0);}
{\draw[fill] (17.0,7.0) circle (2pt);}
{\draw (17.0,8.0) -- (17.0,7.0);}
{\draw (18.0,8.0) -- (17.0,7.0);}
{\draw[fill] (17.0,6.0) circle (2pt);}
{\draw (17.0,7.0) -- (17.0,6.0);}
{\draw (18.0,7.0) -- (17.0,6.0);}
{\draw[fill] (17.0,5.0) circle (2pt);}
{\draw (17.0,6.0) -- (17.0,5.0);}
{\draw (18.0,6.0) -- (17.0,5.0);}
{\draw[fill] (16.0,7.0) circle (2pt);}
{\draw (17.0,8.0) -- (16.0,7.0);}
{\draw[fill] (16.0,6.0) circle (2pt);}
{\draw (16.0,7.0) -- (16.0,6.0);}
{\draw (17.0,7.0) -- (16.0,6.0);}
{\draw[fill] (16.0,5.0) circle (2pt);}
{\draw (16.0,6.0) -- (16.0,5.0);}
{\draw (17.0,6.0) -- (16.0,5.0);}
{\draw[fill] (16.0,4.0) circle (2pt);}
{\draw (16.0,5.0) -- (16.0,4.0);}
{\draw (17.0,5.0) -- (16.0,4.0);}
{\draw[fill] (15.0,6.0) circle (2pt);}
{\draw (16.0,7.0) -- (15.0,6.0);}
{\draw[fill] (15.0,5.0) circle (2pt);}
{\draw (15.0,6.0) -- (15.0,5.0);}
{\draw (16.0,6.0) -- (15.0,5.0);}
{\draw[fill] (15.0,4.0) circle (2pt);}
{\draw (15.0,5.0) -- (15.0,4.0);}
{\draw (16.0,5.0) -- (15.0,4.0);}
{\draw[fill] (15.0,3.0) circle (2pt);}
{\draw (15.0,4.0) -- (15.0,3.0);}
{\draw (16.0,4.0) -- (15.0,3.0);}
{\draw[fill] (14.0,5.0) circle (2pt);}
{\draw (15.0,6.0) -- (14.0,5.0);}
{\draw[fill] (14.0,4.0) circle (2pt);}
{\draw (14.0,5.0) -- (14.0,4.0);}
{\draw (15.0,5.0) -- (14.0,4.0);}
{\draw[fill] (14.0,3.0) circle (2pt);}
{\draw (14.0,4.0) -- (14.0,3.0);}
{\draw (15.0,4.0) -- (14.0,3.0);}
{\draw[fill] (14.0,2.0) circle (2pt);}
{\draw (14.0,3.0) -- (14.0,2.0);}
{\draw (15.0,3.0) -- (14.0,2.0);}
{\draw[fill] (13.0,4.0) circle (2pt);}
{\draw (14.0,5.0) -- (13.0,4.0);}
{\draw[fill] (13.0,3.0) circle (2pt);}
{\draw (13.0,4.0) -- (13.0,3.0);}
{\draw (14.0,4.0) -- (13.0,3.0);}
{\draw[fill] (13.0,2.0) circle (2pt);}
{\draw (13.0,3.0) -- (13.0,2.0);}
{\draw (14.0,3.0) -- (13.0,2.0);}
{\draw[fill] (13.0,1.0) circle (2pt);}
{\draw (13.0,2.0) -- (13.0,1.0);}
{\draw (14.0,2.0) -- (13.0,1.0);}
{\draw[fill] (12.0,3.0) circle (2pt);}
{\draw (13.0,4.0) -- (12.0,3.0);}
{\draw[fill] (12.0,2.0) circle (2pt);}
{\draw (12.0,3.0) -- (12.0,2.0);}
{\draw (13.0,3.0) -- (12.0,2.0);}
{\draw[fill] (12.0,1.0) circle (2pt);}
{\draw (12.0,2.0) -- (12.0,1.0);}
{\draw (13.0,2.0) -- (12.0,1.0);}
{\draw[fill] (12.0,0.0) circle (2pt);}
{\draw (12.0,1.0) -- (12.0,0.0);}
{\draw (13.0,1.0) -- (12.0,0.0);}
{\draw[fill] (18.0,1.0) circle (2pt);}
{\draw[fill] (18.0,0.0) circle (2pt);}
{\draw (18.0,1.0) -- (18.0,0.0);}
{\draw[fill] (17.0,0.0) circle (2pt);}
{\draw (18.0,1.0) -- (17.0,0.0);}
{\draw[fill] (20.0,16.0) circle (2pt);}
{\draw[fill] (20.0,15.0) circle (2pt);}
{\draw (20.0,16.0) -- (20.0,15.0);}
{\draw[fill] (20.0,14.0) circle (2pt);}
{\draw (20.0,15.0) -- (20.0,14.0);}
{\draw[fill] (20.0,13.0) circle (2pt);}
{\draw (20.0,14.0) -- (20.0,13.0);}
{\draw[fill] (19.0,15.0) circle (2pt);}
{\draw (20.0,16.0) -- (19.0,15.0);}
{\draw[fill] (19.0,14.0) circle (2pt);}
{\draw (19.0,15.0) -- (19.0,14.0);}
{\draw (20.0,15.0) -- (19.0,14.0);}
{\draw[fill] (19.0,13.0) circle (2pt);}
{\draw (19.0,14.0) -- (19.0,13.0);}
{\draw (20.0,14.0) -- (19.0,13.0);}
{\draw[fill] (19.0,12.0) circle (2pt);}
{\draw (19.0,13.0) -- (19.0,12.0);}
{\draw (20.0,13.0) -- (19.0,12.0);}
{\draw[fill] (18.0,14.0) circle (2pt);}
{\draw (19.0,15.0) -- (18.0,14.0);}
{\draw[fill] (18.0,13.0) circle (2pt);}
{\draw (18.0,14.0) -- (18.0,13.0);}
{\draw (19.0,14.0) -- (18.0,13.0);}
{\draw[fill] (18.0,12.0) circle (2pt);}
{\draw (18.0,13.0) -- (18.0,12.0);}
{\draw (19.0,13.0) -- (18.0,12.0);}
{\draw[fill] (18.0,11.0) circle (2pt);}
{\draw (18.0,12.0) -- (18.0,11.0);}
{\draw (19.0,12.0) -- (18.0,11.0);}
{\draw[fill] (20.0,8.0) circle (2pt);}
{\draw[fill] (20.0,7.0) circle (2pt);}
{\draw (20.0,8.0) -- (20.0,7.0);}
{\draw[fill] (20.0,6.0) circle (2pt);}
{\draw (20.0,7.0) -- (20.0,6.0);}
{\draw[fill] (20.0,5.0) circle (2pt);}
{\draw (20.0,6.0) -- (20.0,5.0);}
{\draw[fill] (19.0,7.0) circle (2pt);}
{\draw (20.0,8.0) -- (19.0,7.0);}
{\draw[fill] (19.0,6.0) circle (2pt);}
{\draw (19.0,7.0) -- (19.0,6.0);}
{\draw (20.0,7.0) -- (19.0,6.0);}
{\draw[fill] (19.0,5.0) circle (2pt);}
{\draw (19.0,6.0) -- (19.0,5.0);}
{\draw (20.0,6.0) -- (19.0,5.0);}
{\draw[fill] (19.0,4.0) circle (2pt);}
{\draw (19.0,5.0) -- (19.0,4.0);}
{\draw (20.0,5.0) -- (19.0,4.0);}
{\draw[fill] (18.0,6.0) circle (2pt);}
{\draw (19.0,7.0) -- (18.0,6.0);}
{\draw[fill] (18.0,5.0) circle (2pt);}
{\draw (18.0,6.0) -- (18.0,5.0);}
{\draw (19.0,6.0) -- (18.0,5.0);}
{\draw[fill] (18.0,4.0) circle (2pt);}
{\draw (18.0,5.0) -- (18.0,4.0);}
{\draw (19.0,5.0) -- (18.0,4.0);}
{\draw[fill] (18.0,3.0) circle (2pt);}
{\draw (18.0,4.0) -- (18.0,3.0);}
{\draw (19.0,4.0) -- (18.0,3.0);}
{\draw[fill] (20.0,0.0) circle (2pt);}
{\node[above right=0pt,font=\tiny] at (12.0,14.0) {$1$};}
{\draw[fill] (12.0,14.0) circle (4pt);}
{\node[above right=0pt,font=\tiny] at (18.0,17.0) {$v_n$};}
{\draw[fill] (18.0,17.0) circle (4pt);}
{\node[above right=0pt,font=\tiny] at (6.0,11.0) {$v_n^{-1}$};}
{\draw[fill] (6.0,11.0) circle (4pt);}
{\node[above right=0pt,font=\tiny] at (10.5,13.0) {$\rho$};}
{\draw[fill] (11.0,13.0) circle (4pt);}
{\node[above right=0pt,font=\tiny] at (12.0,13.0) {$\tau$};}
{\draw[fill] (12.0,13.0) circle (4pt);}
{\node[above right=0pt,font=\tiny] at (6.1,7.7) {$\rho^{2^{n+1}-2}$};}
{\draw[fill] (6.0,8.0) circle (4pt);}
{\node[above right=0pt,font=\tiny] at (10.8,9.7) {$\{\tau^{2^{n}-1}, \rho^{2^{n+1}-2}v_n \}$};}
{\draw[fill] (12.0,11.0) circle (4pt);}
{\node[above right=0pt,font=\tiny] at (12.0,6.0) {$\tau^{2^{n+1}}$};}
{\draw[fill] (12.0,6.0) circle (4pt);}
{\draw[fill] (0.0,5.0) circle (6pt);}
{\draw[fill] (6.0,8.0) circle (6pt);}
{\draw[fill] (6.0,0.0) circle (6pt);}
{\draw[fill] (12.0,11.0) circle (6pt);}
{\draw[fill] (12.0,3.0) circle (6pt);}
{\draw[fill] (18.0,14.0) circle (6pt);}
{\draw[fill] (18.0,6.0) circle (6pt);}
        \end{tikzpicture}

  \caption{$K_{\star}(n)$ of the real numbers for $n=2$. The pattern is repeated horizontally by multiplication by $v_n$ and vertically by multiplication by $\tau^{2^{n+1}}$.}
  \label{fig:K(n)}
\end{figure}
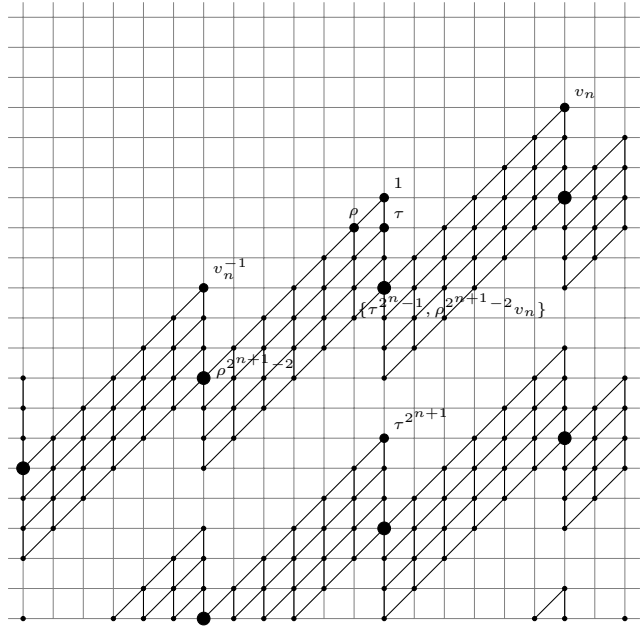

Below are some lemmas on the action of $\AAA^\star$ on $H^\star(F;\Z/2)$ used in the proof of \Cref{lem:Kn-coeff}.
The following lemma for $k = 2^j$ is \cite[Lemma 6.3]{Yagita:atiyah}.
\begin{lemma}
\label{lem:Sqtau}
The action of the motivic Steenrod squares on $\tau^n$ is given by
\begin{align*}
\Sq^{2k}(\tau^n) &= {\lfloor\frac{n+k-1}{2}\rfloor \choose k}\rho^{2k}\tau^{n-k}, k > 0, \\
\Sq^{2k+1}(\tau^n) &=
\left({\lfloor\frac{n+k+1}{2}\rfloor \choose k + 1}
+ {\lfloor\frac{n+k}{2}\rfloor \choose k + 1}\right)\rho^{2k+1}\tau^{n-k-1}.
\end{align*}
Here we define $\tau^j = 0$ for $j < 0$.
\end{lemma}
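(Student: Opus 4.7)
The plan is to induct on $n$. For the base case $n=0$, $\tau^{0}=1$ and both sides vanish for $k\geq 1$ while $\Sq^{0}(1)=1$, so the formulas hold. The base values $\Sq^{0}(\tau)=\tau$, $\Sq^{1}(\tau)=\rho$, and $\Sq^{i}(\tau)=0$ for $i\geq 2$ will be read off from the coaction $\tau\mapsto \tau + \rho\tau_{0}$ of the dual motivic Steenrod algebra on $\tau\in H^{0,1}(F;\Z/2)$; only the generator $\tau_{0}$ contributes, which produces the single nonzero operation $\Sq^{1}$.

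For the inductive step I would write $\tau^{n}=\tau\cdot\tau^{n-1}$ and apply the motivic Cartan formula. Because odd motivic Steenrod squares $\Sq^{2i+1}$ shift motivic weight by $i$ rather than $i+1$, the motivic Cartan formula carries $\tau$- and $\rho$-correction terms compared with its topological analogue; for example,
\[
\Sq^{2k}(xy)=\sum_{i+j=k}\Sq^{2i}(x)\Sq^{2j}(y)+\tau\sum_{i+j=k-1}\Sq^{2i+1}(x)\Sq^{2j+1}(y).
\]
Since $\Sq^{i}(\tau)=0$ for $i\geq 2$, only the terms involving $\Sq^{0}(\tau)$ and $\Sq^{1}(\tau)$ contribute, and the Cartan formula collapses to the recurrences
\[
\Sq^{2k}(\tau^{n})=\tau\,\Sq^{2k}(\tau^{n-1})+\tau\rho\,\Sq^{2k-1}(\tau^{n-1}),
\]
\[
\Sq^{2k+1}(\tau^{n})=\rho\,\Sq^{2k}(\tau^{n-1})+\tau\,\Sq^{2k+1}(\tau^{n-1})+\rho^{2}\,\Sq^{2k-1}(\tau^{n-1}).
\]

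Substituting the inductive hypothesis and factoring out the common monomial $\rho^{2k}\tau^{n-k}$ (respectively $\rho^{2k+1}\tau^{n-k-1}$) reduces the claim to binomial congruences modulo $2$. The even case is automatic because one cross term appears twice and vanishes. The odd case reduces to
\[
\binom{\lfloor(n+k-1)/2\rfloor}{k}+\binom{\lfloor(n+k-1)/2\rfloor}{k+1}\equiv\binom{\lfloor(n+k+1)/2\rfloor}{k+1}\pmod{2},
\]
which I would verify by splitting into the two cases according to the parity of $n+k$ and invoking Pascal's rule once in each.

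The main obstacle will be formulating the motivic Cartan formula precisely with its $\tau$- and $\rho$-corrections and keeping careful track of which terms survive when $\tau$ is one of the factors. Once that bookkeeping is in place, the residual identities on binomial coefficients modulo $2$ are elementary.
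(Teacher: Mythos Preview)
Your approach is correct and essentially identical to the paper's: both argue by induction on $n$, apply the motivic Cartan formula to $\tau\cdot\tau^{n-1}$, and obtain exactly the two recurrences you wrote (the paper records them as $P^{k,n}=P^{k,n-1}+B^{k-1,n-1}$ and $B^{k,n}=B^{k,n-1}+P^{k,n-1}+B^{k-1,n-1}$, then packages the binomial verification into a separate lemma rather than doing it inline). Your reduction of the odd case to Pascal's rule is in fact a single application, since $\lfloor (n+k+1)/2\rfloor=\lfloor (n+k-1)/2\rfloor+1$ always, so no parity split is needed.
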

\begin{proof}
We use the notation $P^k = Sq^{2k}$ and $\Sq^{2k+1} = B^k$,
and define $P^k(\tau^n) = P^{k,n}\rho^{2k}\tau^{n-k}$, where $P^{k,n}\in\Z/2$, and similarly for $B^{k,n}\in\Z/2$.
The Cartan formula \cite[Proposition 9.6]{Voevodsky:power}, \cite[Proposition 4.4.2]{Riou}, \cite[p.~32]{Riou:slides} implies
\begin{align*}
P^{k}(\tau^n) &= \tau P^{k}(\tau^{n-1}) + \rho\tau B^{k-1}(\tau^{n-1}) \\
B^{k}(\tau^n) &= \tau B^{k}(\tau^{n-1}) + \rho P^{k-1}(\tau^{n-1}) + \rho^2 B^{k-1}(\tau^{n-1})
\end{align*}
which gives the recursion \eqref{eq:PBrecursion} in \Cref{lem:PBrecursion} with the solution \eqref{eq:PBsolution}.
\end{proof}

\begin{lemma}
\label{lem:PBrecursion}
Modulo 2
\begin{align}
\label{eq:PBsolution}
P^{k,n} &= \begin{cases}
1 & k = n = 0 \\
{\lfloor\frac{n+k-1}{2}\rfloor \choose k} & \text{otherwise},
\end{cases} \\
B^{k,n} &= {\lfloor\frac{n+k+1}{2}\rfloor \choose k + 1}
+ {\lfloor\frac{n+k}{2}\rfloor \choose k + 1},
\nonumber
\end{align}
solves the recursion
\begin{align}
\label{eq:PBrecursion}
P^{0,n} &= 1, B^{0,2n} = 0, B^{0,2n+1} = 1, n \geq 0, \\
P^{0,0} &= 1, B^{k,0} = 0, P^{k+1,0} = 0, k > 0, \nonumber\\
P^{k,n} &= P^{k,n-1} + B^{k-1,n-1}, \nonumber\\
B^{k,n} &= B^{k,n-1} + P^{k,n-1} + B^{k-1,n-1},\nonumber
\end{align}
for $k, n \geq 0$.
\end{lemma}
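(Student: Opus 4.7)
The plan is to verify the closed forms by double induction, treating the mod~$2$ binomial identities as the engine and the parities of $n$ and $k$ as bookkeeping. First I would check the base cases. For $k=0$, the proposed formula gives $P^{0,n}={\lfloor (n-1)/2 \rfloor \choose 0}=1$ and $B^{0,n}={\lfloor (n+1)/2 \rfloor \choose 1}+{\lfloor n/2 \rfloor \choose 1}$; a quick parity split shows $B^{0,2n}=n+n=0$ and $B^{0,2n+1}=(n+1)+n=1$ mod $2$, matching \eqref{eq:PBrecursion}. For $n=0$ and $k\geq 1$, the formulas give ${\lfloor (k-1)/2 \rfloor \choose k}=0$ and ${\lfloor (k+1)/2 \rfloor \choose k+1}+{\lfloor k/2 \rfloor \choose k+1}=0$, again as required.

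For the inductive step I would fix $(k,n)$ with $k,n\geq 1$, assume \eqref{eq:PBsolution} for $(k,n-1)$ and $(k-1,n-1)$, and verify the two recursions of \eqref{eq:PBrecursion}. Both cases reduce to the Pascal identity $\binom{a}{b}\equiv \binom{a-1}{b}+\binom{a-1}{b-1}\pmod 2$. Concretely, for the $P$-recursion I need
\[
{\lfloor\tfrac{n+k-1}{2}\rfloor \choose k} \equiv {\lfloor\tfrac{n+k-2}{2}\rfloor \choose k} + {\lfloor\tfrac{n+k-1}{2}\rfloor \choose k} + {\lfloor\tfrac{n+k-2}{2}\rfloor \choose k} \pmod 2,
\]
which follows from $\lfloor (n+k-1)/2\rfloor = \lfloor(n+k-2)/2\rfloor + \varepsilon$ with $\varepsilon\in\{0,1\}$ determined by the parity of $n+k$; when $\varepsilon=0$ both summed expressions on the right telescope to the left, and when $\varepsilon=1$ one applies Pascal's identity to pass from $\lfloor(n+k-2)/2\rfloor$ to $\lfloor(n+k-1)/2\rfloor$. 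The $B$-recursion is handled analogously, grouping the two binomial coefficients making up $B^{k,n}$ with the corresponding coefficients in $B^{k,n-1}$, $P^{k,n-1}$ and $B^{k-1,n-1}$ and again using parity of $n+k$ to decide which floor increases.

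The only delicate point is the parity case analysis: depending on whether $n$ (or equivalently $n+k$) is even or odd, exactly one of $\lfloor(n+k-1)/2\rfloor$ and $\lfloor(n+k-2)/2\rfloor$ coincide, and similarly for the pairs appearing in $B$. I would organize the verification in two tables, one for $n$ even and one for $n$ odd, and in each table check that the left-hand side of \eqref{eq:PBrecursion} equals the mod~$2$ sum of the right-hand side after substituting the explicit floors. The main obstacle is thus purely combinatorial bookkeeping; no new idea beyond Pascal's identity mod~$2$ and careful parity tracking is required, and the induction closes once these finitely many cases are checked.
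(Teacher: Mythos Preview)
Your proposal is correct and follows the natural approach: the paper gives no proof of this lemma, leaving it as a routine verification, and direct substitution plus induction is exactly what is intended. One small simplification you may want to note: for the $P$-recursion no parity analysis is needed at all, since the two occurrences of $\binom{\lfloor(n+k-2)/2\rfloor}{k}$ on the right cancel mod~$2$; and for the $B$-recursion the key point is simply $\lfloor(n+k+1)/2\rfloor=\lfloor(n+k-1)/2\rfloor+1$ together with Pascal's identity, so the full two-table case split can be avoided.
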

\begin{remark}
By adding correction terms of the form ${0 \choose k}{n \choose 0}$ to $P^{k,n}$ and $B^{k,n}$ in \eqref{eq:PBsolution} we obtain closed forms for $P^{k,n}$ solving \eqref{eq:PBrecursion} for all integers $k, n$.
\end{remark}

As a consequence of \Cref{lem:Sqtau} we get the following lemma.
This is also stated in \cite[Lemma 6.2]{Yagita:atiyah}.
\begin{lemma}
\label{cor:Qn-action}
The action of the Milnor-primitives on $\tau^n$ is given by
\[
Q_k(\tau^n) = {n \choose 2^k}\rho^{2^{k+1}-1}\tau^{n-2^k}.
\]
\end{lemma}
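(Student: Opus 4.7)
The proof will proceed by induction on $k$. For the base case $k=0$, I use that $Q_0 = \Sq^1$ and apply Lemma~\ref{lem:Sqtau} with $k = 0$ in the formula for $\Sq^{2k+1}$: this yields $\Sq^1(\tau^n) = (\lfloor (n+1)/2 \rfloor + \lfloor n/2 \rfloor)\rho\tau^{n-1} = n\rho\tau^{n-1}$, and $n \equiv \binom{n}{1} \pmod 2$.

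For the inductive step, assuming the formula for $k$, I invoke the standard Milnor-primitive recursion
\[
Q_{k+1} \;=\; \Sq^{2^{k+1}} Q_k \,+\, Q_k \Sq^{2^{k+1}},
\]
which is valid in the motivic Steenrod algebra by the same Milnor-basis argument as in topology. Applied to $\tau^n$, the inductive hypothesis for $Q_k$ and Lemma~\ref{lem:Sqtau} for $\Sq^{2^{k+1}}$ express both summands as $\mathbb{F}_2$-linear combinations of monomials $\rho^a \tau^b$. To push the outer operator through these products I then use the motivic Cartan formula together with the iterated computation of $\Sq^i(\rho^N)$, which in turn follows from knowing $\Sq^j(\rho)$ for $j \in \{0,1\}$ and instability for $j \geq 2$.

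By bidegree considerations the total result lands in the line spanned by $\rho^{2^{k+2}-1}\tau^{n-2^{k+1}}$, so the inductive step reduces to identifying a single $\mathbb{F}_2$-coefficient. The main obstacle is this last step: one has to verify that, after combining the contributions from both $\Sq^{2^{k+1}} Q_k(\tau^n)$ and $Q_k \Sq^{2^{k+1}}(\tau^n)$ and simplifying the nested binomial coefficients coming from Lemma~\ref{lem:Sqtau}, the total coefficient equals $\binom{n}{2^{k+1}}$ mod $2$. Via Lucas's theorem this is a statement purely about the binary digits of $n$ (since $\binom{n}{2^j} \bmod 2$ is the $j$th binary digit), and so the identity can be verified digit-by-digit. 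The bookkeeping is tedious but elementary; no further motivic input is required beyond Lemma~\ref{lem:Sqtau} and the Cartan formula.
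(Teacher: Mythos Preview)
Your approach matches the paper's: both argue by induction on $k$, invoke a recursion expressing $Q_{k+1}$ in terms of $Q_k$ and a Steenrod square, and reduce the inductive step to a mod~$2$ binomial identity checked digit-by-digit. The paper cites \cite[Corollary~4]{Kylling} for the recursion and displays the resulting identity explicitly.

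The one step you should not wave through is the assertion that $Q_{k+1}=\Sq^{2^{k+1}}Q_k+Q_k\Sq^{2^{k+1}}$ holds in the motivic Steenrod algebra ``by the same Milnor-basis argument as in topology.'' The motivic dual carries the extra relation $\tau_i^2=\tau\xi_{i+1}+\rho\tau_{i+1}+\rho\tau_0\xi_{i+1}$, so Milnor-basis products can pick up $\rho$-corrections that are absent topologically; the argument is not literally the same. The paper's displayed identity in fact has \emph{three} summands, the third of which vanishes mod~$2$, consistent with the cited motivic recursion carrying a $\rho$-correction term. Your two-term commutator yields precisely the first two summands of that identity, and those do sum to $\binom{n}{2^{k+1}}$ mod~$2$, so the final answer comes out right---but you should justify the commutator formula motivically (or cite a source) rather than assert it by analogy. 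One simplification you can use along the way: since $\Sq^1(\rho)=0$ (as $[-1]$ lifts to $\mu_4$) and $\Sq^j(\rho)=0$ for $j\ge 2$ by instability, the motivic Cartan formula gives $\Sq^j(\rho^N y)=\rho^N\Sq^j(y)$, so no ``iterated computation of $\Sq^i(\rho^N)$'' is needed.
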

\begin{proof}
Do induction on $k$ and $n$, use \cite[Corollary 4]{Kylling} and the identity
\begin{align*}
{n \choose 2^{k+1}} =&
{n \choose 2^{k}}{\lfloor\frac{n-1}{2}\rfloor \choose 2^{k}}
+
{n-2^k \choose 2^{k}}{\lfloor\frac{n+2^k-1}{2}\rfloor \choose 2^{k}}\\
&+
{n-2^k \choose 2^{k}}{n \choose 2^{k-1}}{\lfloor\frac{n-1}{2}\rfloor \choose 2^{k-1}} \\
=&
{n \choose 2^{k}}{\lfloor\frac{n-1}{2}\rfloor \choose 2^{k}}
+
{n-2^k \choose 2^{k}}{\lfloor\frac{n+2^k-1}{2}\rfloor \choose 2^{k}} \bmod 2.
\end{align*}
The identity is easy to show using that ${n \choose 2^k} = 1$ if and only if the $k$th bit in the binary expansion of $n$ is $1$, and similar expressions for the other binomial coefficients.
\end{proof}

\printbibliography

\end{document}